\theoremstyle{plain}
\newtheorem{theorem}{Theorem}[section]
\newtheorem{lemma}[theorem]{Lemma}
\newtheorem{proposition}[theorem]{Proposition}
\theoremstyle{definition}
\newtheorem{definition}[theorem]{Definition}
\theoremstyle{remark}
\newtheorem{remark}[theorem]{Remark}
\numberwithin{equation}{section}
\newcommand*{\eqdef}{\stackrel{\text{\scriptsize{def}}}{=}}
\newcommand{\dd}{\mathrm{d}}
\author{
  \bf Damiano De Gaspari\\
  \texttt{ \small damiano.de.gaspari@tuwien.ac.at}\\
  \small{Technical University of Vienna, Austria}
  \and
  \bf Levi Haunschmid-Sibitz\\
  \texttt{\small levi.haunschmid@tuwien.ac.at}\\
  \small{Technical University of Vienna, Austria}
}
\title{$(\log t)^\frac{2}{3}$ - superdiffusivity for the 2d stochastic Burgers equation}
\date{November 29, 2024}
\begin{document}

\maketitle

\begin{abstract}
The Stochastic Burgers equation was introduced in [H. van Beijeren, R. Kutner and H. Spohn, Excess noise for driven diffusive systems, PRL, 1985] as a continuous approximation of the fluctuations of the asymmetric simple exclusion process. It is formally given by 
\[\partial_t\eta =\frac{1}{2}\Delta\eta+ \mathfrak w\cdot\nabla(\eta^2) + \nabla\cdot\xi \, ,\]
where $\xi$ is $d$-dimensional space time white noise and $\mathfrak w$ is a fixed non-zero vector. In the critical dimension $d=2$ at stationarity, we show that this system exhibits superdiffusive behaviour: more specifically, its bulk diffusion coefficient behaves like $(\log t)^\frac23$, in a Tauberian sense, up to $\log\log\log t$ corrections. This confirms a prediction made in the physics literature and complements [G. Cannizzarro, M. Gubinelli, F. Toninelli, Gaussian Fluctuations for the stochastic Burgers equation in dimension $d\geq 2$, CMP, 2024], where the same equation was studied in the weak-coupling regime. Furthermore this model can be seen as a continuous analogue to [H.T. Yau, $(\log t)^\frac{2}{3}$ law of the two dimensional asymmetric simple exclusion process, Annals of Mathematics, 2004].
\end{abstract}

\bigskip\noindent
{\it Keywords and phrases.}
superdiffusivity, SPDEs, Stochastic Burgers equation, bulk diffusion coefficient

\tableofcontents

\section{Introduction}
We study the stochastic Burgers equation formally given by
\begin{equ}\label{eq:formal}
	\partial_t\eta =\frac{1}{2}\Delta\eta+ \mathfrak w\cdot\nabla(\eta^2)  + \nabla\cdot\xi \, ,
\end{equ}
where $\eta=\eta(t,x)$ is a scalar field depending on time $t$ and space $x\in\mathbb R^d$, with $d\geq 1$, $\mathfrak w\in\mathbb R^d$ is a fixed vector controlling the strength and direction of the nonlinearity and $\xi=(\xi_1,\dots,\xi_d)$ is $d$-dimensional space-time white noise, namely the centered Gaussian process whose covariance structure is formally given by 
\begin{equ}
    \mathsf E(\xi_i(t,x)\xi_j(s,y)) = 
    \begin{cases}
        \delta(t-s)\delta(x-y),\quad & \text{if $i=j$}\\
        0, &\text{else,}
    \end{cases}
\end{equ}
for $i,j \in \{1,\dots,d\}$, $t,s\in [0,\infty)$, $x,y\in \mathbb R^d$ and where $\delta$ is the Dirac delta function.
This equation was introduced in \cite{van1985excess} as a proposed  continuum analogue of the fluctuations of driven diffusive systems with one conserved quantity, like the Asymmetric Simple Exclusion Process. In this work we will always consider~(\ref{eq:formal}) for $\eta_0\sim \underline \eta$, the law of $\mathbb R^d$ space white noise. For this choice of initial distribution, the solution is formally stationary and distributed according to $\underline \eta$ at all times. 

In dimension $d=1$ this equation is equivalent to the space derivative of the Kardar-Parisi-Zhang (KPZ) equation, for which there has been a tremendous amount of new results in the last decade.
In particular there is a global pathwise solution theory, see \cite{HairerKPZ, GubinelliPerkowskiKPZ}, and much more is known, see e.g. the surveys \cite{QuastelSpohn15KPZSurvey,CorwinKPZsurvey} and the references therein.
In particular the connection with the discrete models (in particular the Weakly Asymmetric Exclusion Process on $\mathbb{Z}$) is well established.
See \cite{bertiniGiacomin1997KPZandparticles,hairerQuastel2018growthKPZ,gonccalvesPerkowski2020derivation} for works connecting particle and growth models to one-dimensional KPZ.
For a study of the bulk diffusivity and similar quantities for $d=1$ see \cite{BQS2011KPZfluctuations}.
In dimension $d\geq 3$ the recent work \cite{cannizzaro2023gaussian} establishes Gaussian fluctuations at large scales. The analogous result for asymmetric simple exclusion processes was proven before in \cite{EMY1994diffusive,LY1997fluctuation,CLO2001equilibrium}.

Dimension $d=2$ is of particular interest for several reasons. First of all, for $d\geq 2$ it falls outside the domain of applicability of both the method of regularity structures developed in \cite{hairer2014theory} and the paracontrolled distribution method of \cite{gubinelliperkowski2015paracontrolled,gubinelliperkowski2018introduction}.
Moreover, it is the \emph{critical} dimension in the sense of scaling, as we will further discuss in Section~\ref{se:scaling}.
It is also the model which should describe the fluctuations of 2d ASEP, for which $(\log t)^{\frac23}$ superdiffusivity was shown in \cite{Yau2004logtProcess}.
The recent work \cite{cannizzaro2023gaussian} studies the weak coupling regime of the 2d stochastic Burgers equation, i.e. the size of the nonlinearity is scaled down while looking at larger and larger scales.
In this regime they also find non-trivial Gaussian fluctuations, in the sense that the limiting equation is a stochastic heat equation with modified Laplacian, that depends on the nonlinearity.
This result suggests superdiffusivity for the strong coupling case, i.e. when the nonlinearity is not scaled down, but it does not imply it.
This is the case to which the present case is devoted.
Our result can be seen as an analog of \cite{Yau2004logtProcess} in the continuum and is also the first critical SPDE for which $(\log t)^\frac23$ superdiffusivity has been proven, to the best of the authors' knowledge.
While our estimates remain technical, we manage to avoid the splitting of sums into various good and bad regions, which has been a major obstacle to replicating the success of \cite{Yau2004logtProcess} to other models.
Also, compared to \cite{Yau2004logtProcess}, the sub-leading corrections to the $(\log t)^\frac23$ behavior are of lower order.

In general, bulk diffusion coefficients have been conjectured to diverge either like $(\log t)^\frac12$ or like $(\log t)^\frac23$ for a wide variety of models in the critical dimension, see e.g. \cite{toth2012superdiffusive,LRY2005superdiffusivity,wainwright1971decay}.
Recent successes in proving $(\log t)^\frac12$ superdiffusivity are \cite{ABK2024superdiffusiveCLT, CET2023stationary, CHSTDiffusionCurl, LimaWeber2024brownian}.
The distinction between these two classes is characterized by their symmetries.
The models in the $(\log t)^\frac23$ universality class have one direction in which the system behaves superdiffusively, while in the orthogonal direction it behaves diffusively.
In our case this direction is given by the vector $\mathfrak w$.
The models in the $(\log t)^\frac12$  class, on the other hand, often have some kind of rotational symmetry and behave superdiffusively in every direction.

Finally let us summarize the structure of this paper.
In the following Subsection~\ref{se:scaling} we rigorously define the equation and the bulk diffusion coefficient and state the main theorem.
Then, in Section~\ref{se:prelim}, we set up notation and recall elements of Gaussian analysis and the form of the generator.
In Section~\ref{se:truncres} we reduce the problem to estimating certain operators on Fock space.
Then, in Section~\ref{se:estimates}, we prove iterative estimates of these operators, which we use in Section~\ref{se:mainproof} to prove the main theorem. Finally, Appendix~\ref{ap:replacement_lemmas} gathers some self-contained technical results useful for Section~\ref{se:estimates}, while Appendix~\ref{ap:bulkheuristic} presents a heuristic explanation that motivates the expression for the bulk diffusivity given by~(\ref{eq:bulkD}) below.

\subsection{Scaling, Regularization and Green-Kubo formula}\label{se:scaling}

As it is written, equation (\ref{eq:formal}) is ill-posed, since any solution would be too irregular for the nonlinearity to be well-defined.
Since we are interested in the large scale behaviour, we regularize the nonlinearity at small scales and then consider larger and larger scales.
We do so by introducing Fourier cut-offs inside and outside the nonlinearity:
\begin{equ}\label{eq:fullspace}
	\partial_t\eta=\frac12\Delta\eta+\mathfrak w \cdot \Pi_1\nabla(\Pi_1\eta)^2+\nabla\cdot\xi\,,
\end{equ}
where for $a>0$, the Fourier cut-off $\Pi_a$ acts on $\eta$ in Fourier by cutting modes larger than $a$, i.e.
\begin{equ}
	\widehat{\Pi_a\eta}(k)\eqdef\widehat{\eta}(k)\mathbf{1}_{|k|\leq a}\,.
\end{equ}
Additionally, in order to avoid integrability issues arising in infinite volume, we study the equation on a large torus $\mathbb T^2_N$ of side-length $2\pi N$.
We will later let $N$ go to infinity, see Theorem~\ref{th:main}. We recall that we are always considering the equation at stationarity, namely with initial condition distributed according to the law of space white noise $\underline \eta$. Indeed, regularizing the nonlinearity as described above does not change the fact that $\underline \eta$ is invariant under the dynamics, formally for the equation on full space~(\ref{eq:fullspace}) and rigorously for the one on the torus~(\ref{eq:toroN}), see Lemma~\ref{le:generator} below.    
For equation~(\ref{eq:fullspace}) we define the bulk diffusivity using a Green-Kubo formula justified in Appendix~\ref{ap:bulkheuristic}:
\begin{equ}\label{eq:bulkD}
	D^N(t)\eqdef
	1+\frac{\lvert\mathfrak w\rvert^2}{t}\int_0^{t}\int_0^s\int_{\mathbb T^2_N}\mathbf{E}\left(\Pi_1{:}(\Pi_1\eta)^2{:}(r,x)\Pi_1{:}(\Pi_N\eta)^2{:}(0,0) \right)\dd x \dd r \dd s\,,
\end{equ}
where $\mathbf{E}$ denotes the expectation with respect to the stationary solution started from mean-zero white noise, and ${:}X^2{:}$ denotes the Wick product, which in this case just subtracts the expectation, i.e. ${:}X^2{:}=X^2-\mathbf{E}(X^2)$.
Heuristically, the bulk diffusivity coefficient measures how correlations spread out in space as a function in time.

For convenience, we work on the torus with side-length $2\pi$.
To do so, define the rescaled solution $\eta^N:\mathbb R_+\times\mathbb T^2_1\to\mathbb R$ by
\begin{equ}\label{eq:scaling}
	\eta^N(t,x)=N\eta\left(N^2t,Nx\right)\,,
\end{equ}
which solves the equation
\begin{equ}[eq:toroN]
	\partial_t\eta^N =\frac{1}{2}\Delta\eta^N+ \mathfrak w\cdot\Pi_N\nabla\left(\Pi_N\eta^N\right)^2  + \nabla\cdot\xi \, .
\end{equ}
Expressing the bulk diffusivity from (\ref{eq:bulkD}) in terms of $\eta^N$ leads, after a suitable change of variables, to  the expression
\begin{equ}\label{eq:bulkDN}
	D^N(t)=
	1+N^2\frac{\lvert\mathfrak w\rvert^2}{t}\int_0^{\frac{t}{N^2}}\int_0^s\int_{\mathbb T^2_1}\mathbf{E}\left(\Pi_N{:}(\Pi_N\eta^N)^2{:}(r,x)\Pi_N{:}(\Pi_N\eta^N)^2{:}(0,0) \right)\dd x \dd r \dd s\,.
\end{equ}
Our main theorem concerns the Laplace transform of $D^N$, defined by
\begin{equ}\label{eq:LapD}
	\mathcal{D}^N(\lambda)\eqdef\int_0^\infty e^{-\lambda t}tD^N(t)\dd t\,.
\end{equ}
Note that this is the standard Laplace transform instead of the one used in \cite{CET2023stationary}, but the two definitions only differ by a factor of $\lambda$.

\begin{theorem}\label{th:main}
	Let $\mathfrak w\neq 0$ and let $\eta^N$ be the stationary solution to \eqref{eq:toroN} as considered above. Define the Laplace transform $\mathcal D^N$ of the bulk diffusivity as in (\ref{eq:LapD}).
	Then, for every $\delta\in(0,1)$, there is a constant $C = C(\lvert \mathfrak w\rvert)$ such that, for all $\lambda$ small enough,
	\begin{equ}
		\limsup_{N\to\infty}\mathcal{D}^N(\lambda)\leq \frac{C}{\lambda^2}\left(\log\log\lvert\log\lambda\rvert\right)^{3+\delta}\lvert\log\lambda\rvert^\frac23
	\end{equ}
	and
	\begin{equ}
		\liminf_{N\to\infty}\mathcal{D}^N(\lambda)\geq \frac{1}{C\lambda^2}\left(\log\log\lvert\log\lambda\rvert\right)^{-3-\delta}\lvert\log\lambda\rvert^\frac23\,.
	\end{equ}
\end{theorem}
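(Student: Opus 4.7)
The plan is to convert $\mathcal{D}^N(\lambda)$ into a resolvent quantity on Fock space and then control this resolvent via the iterative estimates of Sections \ref{se:truncres} and \ref{se:estimates}.

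First I would use stationarity of $\eta^N$ to convert the double time integral in (\ref{eq:bulkDN}) into a Laplace-transformed equilibrium two-point function. Writing $\mathcal{L}_N$ for the generator of the stationary dynamics of (\ref{eq:toroN}) and $\mathcal{G}_N$ for the element of the second Wiener chaos representing $\mathfrak w\cdot \Pi_N{:}(\Pi_N\eta^N)^2{:}$, this produces an identity of the form
\begin{equ}
\mathcal{D}^N(\lambda) = \frac{1}{\lambda^2}\Bigl(1 + c|\mathfrak w|^2 \langle \mathcal{G}_N,\,(\mu - \mathcal{L}_N)^{-1}\mathcal{G}_N\rangle\Bigr),
\end{equ}
with $c$ an explicit constant and $\mu$ a rescaled version of $\lambda$ arising from (\ref{eq:scaling}). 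The claim then reduces to matching upper and lower bounds on this inner product that differ only by the stated triple-log factor, uniformly in $N$ large enough in terms of $\lambda$ (so that the cut-off $\Pi_N$ captures all relevant Fourier modes).

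Second, using the Gaussian-analysis framework of Section \ref{se:prelim}, I would decompose $\mathcal{L}_N = -\mathcal{S}_N + \mathcal{A}_N$ with $\mathcal{S}_N \geq 0$ the Ornstein-Uhlenbeck part and $\mathcal{A}_N$ the antisymmetric nonlinearity, which raises and lowers Wiener chaos by one. By a Dirichlet-form / sector-condition argument, upper bounds on $\langle \mathcal{G}_N, (\mu - \mathcal{L}_N)^{-1}\mathcal{G}_N\rangle$ follow from lower bounds on an effective symmetric operator built by iteratively replacing $\mathcal{A}_N$-transitions with their own resolvents, and vice versa. Theorem \ref{th:iterative_bounds} provides precisely such bounds after $n$ iterations; the key point, emphasized in the introduction, is that off-diagonal chaos transitions are tracked as separate correction terms rather than absorbed into the diagonal main part (departing from \cite{CET2023stationary,CHSTDiffusionCurl}), which is what allows the recursion to be continued cleanly to arbitrary depth.

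Third, I would optimize the iteration depth $n$ as a function of $\lambda$. Each additional step multiplies the effective coefficient by a factor of order $|\log\lambda|^{1/3}$ while introducing a small multiplicative overhead, and the residual at depth $n$ contributes a correction of order roughly $|\log\lambda|^{1/3}$ divided by a polynomial factor in $n$. Balancing these competing effects forces $n \sim \log\log|\log\lambda|$, producing the advertised $|\log\lambda|^{2/3}$ rate together with a $(\log\log|\log\lambda|)^{3+\delta}$ correction; the slack $\delta > 0$ absorbs the geometric accumulation of overheads and the gap between matching upper and lower bounds. The $N \to \infty$ limit is then taken using the $N$-uniformity of Theorem \ref{th:iterative_bounds}.

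The hard step is establishing Theorem \ref{th:iterative_bounds} itself in Section \ref{se:estimates}. Avoiding the good/bad region decomposition of \cite{Yau2004logtProcess} requires sharp spectral estimates on the convolution operators arising from $(\Pi_N\eta^N)^2$, and keeping the off-diagonal Fock-space contributions separate through $n \sim \log\log|\log\lambda|$ iterations demands careful bookkeeping so that the per-step overheads do not compound catastrophically. Once these estimates are established, the proof of Theorem \ref{th:main} is a matter of substitution and optimization in $n$.
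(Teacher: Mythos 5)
Your proposal follows essentially the same route as the paper: Proposition~\ref{pr:Laplace_bulk} converts $\mathcal D^N(\lambda)$ into the resolvent inner product $\langle\mathfrak n^N,(\lambda N^2-\mathcal L^N)^{-1}\mathfrak n^N\rangle$, which is sandwiched by truncated resolvents via Lemma~\ref{le:yau} and Lemma~\ref{le:innerproduct}, then controlled by Theorem~\ref{th:iterative_bounds} with diagonal and off-diagonal contributions kept separate, and finally the depth is optimized at $k\sim\log\log\lvert\log\lambda\rvert$. One imprecision, which does not affect the endpoint: the balancing works not because each iteration multiplies by $\lvert\log\lambda\rvert^{1/3}$, but because the exponent $\theta_k$ converges geometrically to $\tfrac23$ (via $\theta_{k+1}=1-\theta_k/2$), so that the residual power $\lvert\log\lambda\rvert^{\theta_k-2/3}$ becomes $O(1)$ precisely when $k\sim\log\log\lvert\log\lambda\rvert$, while the polynomial overheads $z_k(n)\sim K(n+k)^{9/2+3\delta/2}$ accumulate to give the $(\log\log\lvert\log\lambda\rvert)^{3+\delta}$ correction.
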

Note that by translating \cite[Lemma 1]{quastelValko2008note} into our setting, the upper bound gives $D^N(t)\lesssim (1+\log(1+t))^{\frac23+o(1)}$ as $t\uparrow\infty$.
For the lower bound such a statement is not true in general.
Note however that $\mathcal D^N(\lambda)\sim\frac{C}{\lambda^2}\lvert\log\lambda\rvert^\frac23$ as $\lambda \downarrow 0$ would imply $\frac{2C}{T}\int_0^T tD^N(t)\dd t\sim T(\log{T})^\frac23$ as $T \uparrow \infty$ by general Tauberian inversion theorems, see \cite[Chapter XIII.5]{feller1991introduction}.
Thus, the theorem says that $D(t)$ grows like $(\log t)^\frac23$, at least in a weak Tauberian sense.

Note also that the correction terms $(\log\log\lvert\log\lambda\rvert)^{\pm 3\pm \delta}$ of Theorem~\ref{th:main} are of lower order with respect to the ones of the corresponding result in \cite{Yau2004logtProcess}, which are $e^{\pm \gamma(\log\log\lvert \log \lambda \rvert)^2}$ for some constant $\gamma>0$.

\subsection{Sketch of the main proof}
The basic structure of the proof follows the method first used in \cite{LQSY2004Superdiffusivity,Yau2004logtProcess,CET2023stationary,CHSTDiffusionCurl} and since then successfully applied in \cite{CET2023stationary,CHSTDiffusionCurl}.
It consists of the following steps.
First, we express $\mathcal D^N(\lambda)$ as an inner product in the $L^2$ space associated to the stationary measure $\underline\eta$, see Proposition~\ref{pr:Laplace_bulk} below.
This inner product has the form $\langle\phi,(\lambda-\mathcal L)^{-1}\phi\rangle$, for a specific $\phi$.
We then estimate it, using the Chaos decomposition and a Lemma due to \cite{LQSY2004Superdiffusivity}, restated as \ref{le:yau} below, which states that truncating the generator in chaos gives upper and lower bounds for $\mathcal D^N(\lambda)$.
This leads to an iterative estimation scheme, which results in Theorem~\ref{th:iterative_bounds}, which we then use to prove Theorem~\ref{th:main}.
This iterative estimation scheme is inspired by the methods of \cite{LQSY2004Superdiffusivity,Yau2004logtProcess,CET2023stationary,CHSTDiffusionCurl}.
However, the expressions of our upper and lower bounds are different, see Theorem~\ref{th:iterative_bounds} and the definitions in that section.
In particular, compared to \cite{CET2023stationary,CHSTDiffusionCurl} we do not absorb the off-diagonal terms into the main term, but instead estimate them separately.

The above procedure relies heavily on Gaussian Analysis, which has been used successfully to understand a variety of critical and super-critical SPDEs and related models via their generator, see \cite{jinperkowski2024fractional,cannizzaroGiles2024invariance,gubinelli2020infinitesimal}.

\section{Preliminaries}\label{se:prelim}

\subsection{Notation} \label{su:notation}
Recall that for $N>0$ we denote by $\mathbb T^2_N$ the torus of side-length $2\pi N$.
If $N=1$ we write $\mathbb T^2$ instead of $\mathbb T^2_1$.
Let $(e_k)_{k\in\mathbb Z^2}$ be the standard Fourier basis on $\mathbb T^2$, i.e. $e_k(x)=\frac{1}{2\pi}\exp(ik\cdot x)$, which constitute an orthonormal basis of $L^2(\mathbb T^2)$.
The Fourier transform of a function $\varphi\in L^2(\mathbb T^2)$, denoted by $\widehat{\varphi}$, is given by
\begin{equ}
    \widehat\varphi(k)\eqdef \int_{\mathbb T^2}\varphi(x)e_{-k}(x)\dd x \qquad \text{for $k\in\mathbb Z^2$}\,.
\end{equ}

Moreover, we denote by $k_{1:n}$ the sequence $(k_1,\dots,k_n)$, where $k_i\in\mathbb{Z}^2$.
For example, for an $L^2$ function $f$ on $(\mathbb{T}^2)^n$, we write its Fourier transform as $\widehat f(k_{1:n})=\widehat{f}(k_1,\dots,k_n)$.
Furthermore we define $|k_{1:n}|^2\eqdef\sum_{i=1}^n|k_i|^2$.

We denote by $\mathbb P$ and $\mathbb E$ the law and the corresponding expectation of the stationary measure given by mean-zero spatial white noise, as will be defined in subsection~\ref{se:fockspace}.
With $\mathbf{P}$ and $\mathbf{E}$ we denote instead the law and the corresponding expectation of the process given by the solution of (\ref{eq:toroN}) started from the aforementioned stationary measure.

Finally, given $A,B\in\mathbb R$, we write $A\lesssim B$ if there exists an absolute constant $c>0$, independent of all variables on which $A$ and $B$ may depend, such that $A \le c B$. In particular, we will only use this notation if $c$ is independent of $\mathfrak w$.

\subsection{Chaos Decomposition}\label{se:fockspace}

Let $(\Omega,\mathscr{F},\mathbb P)$ be a complete probability space and $\eta$ be real-valued mean-zero spatial white noise on $\mathbb T^2$, i.e. $\eta$ is the Gaussian field with covariance
\begin{equ}\label{eq:realcov}
	\mathbb{E}\left(\eta(\varphi)\eta(\psi)\right)=\langle\varphi,\psi\rangle_{L^{2}(\mathbb{T}^2)}\,,
\end{equ}
where $\varphi$ and $\psi$ belong to $L^2_0(\mathbb T^2)$, the space of square-integrable real-valued functions that integrate to $0$.
Since we work in Fourier, we also want to test $\eta$ against complex valued functions. We do this by setting $\eta(\varphi)=\eta(\mathrm{Re}(\varphi))+\iota \eta(\mathrm{Im}(\varphi))$ every for $\varphi\in L^2(\mathbb T^2;\mathbb C)$, where $\iota = \sqrt{-1}$ denotes the imaginary unit. This leads to considering the covariance function (which extends (\ref{eq:realcov}))
\begin{equ}
\mathbb{E}\left(\eta(\varphi)\overline{\eta(\psi)}\right)=\langle\varphi,\psi\rangle_{L^{2}(\mathbb{T}^2;\mathbb{C})}\,,
\end{equ}
where the inner product is the standard sesquilinear inner product of square-integrable complex valued functions (and $\varphi$ and $\psi$ still integrate to $0$).
Note that $\eta$ is still real-valued in the sense that $\overline{\eta(\varphi)}=\eta(\overline{\varphi})$, which would not be the case for complex-valued white noise, see e.g. \cite[Section 1.4]{Janson1997GaussianSpaces}.
Using this extension, we define $\widehat{\eta}(k)=\eta(e_{-k})$.
These are complex valued Gaussian variables satisfying $\overline{\widehat{\eta}(k)}=\widehat{\eta}(-k)$ and $\mathbb{E}\left(\widehat{\eta}(j)\overline{\widehat{\eta}(k)}\right)=\delta_{j,k}$.
Since we only test against mean-zero functions, $\widehat{\eta}(0)$ is not defined and we set it to $0$.

Let $L^2(\eta)$ be the space of $L^2$ random variables on $\Omega$ measurable with respect to the $\sigma$-algebra generated by $\eta$.
For $n\in\mathbb N$, let $\mathscr{H}_n$ be the \emph{$n$-th homogeneous Wiener chaos}, i.e. the closed linear subspace of $L^2(\eta)$ generated by the random variables $H_n(\eta(h))$, where $H_n$ is the $n$-th Hermite polynomial and $h$ is a mean-zero test function of norm $1$.
By \cite[Theorem 1.1.1]{Nualart}, $L^2(\eta)=\bigoplus_{n\geq0}\mathscr{H}_n$ is an orthogonal Hilbert space decomposition of $L^2(\eta)$.
Define also $\Gamma L^2=\bigoplus_{n\geq0}\Gamma L^2_n$, where $\Gamma L^2_n$ is the $n$-fold symmetric tensor product of $L^2_0(\mathbb{T}^2)$, i.e. the space of symmetric $L^2$ functions $f$ on $(\mathbb T^2)^n$ which are mean-zero in every variable, i.e. such that $\int_{\mathbb T^2}f(x,y_{1:n-1})\dd x=0$ for every $y_{1:n-1}\in(\mathbb T^2)^{n-1}$.
By \cite[Proposition 1.1.1]{Nualart}, there is a canonical isometry $I$ between $\Gamma L^2$ and $L^2(\eta)$, whose restrictions $I_n$ to $\Gamma L^2_n$ are isometries between $\Gamma L^2_n$ and $\mathscr{H}_n$.
This gives the following correspondence: for every $F\in L^2(\eta)$ there is a family of kernels $(f_n)_{n\geq 0}\in\Gamma L^2$ such that $F=\sum_{n\geq 0}I_n(f_n)$ and
\begin{equ}
	\mathbb{E}\left(F^2\right)=\|(f_n)_{n\geq 0}\|_{\Gamma L^2}^2\eqdef\sum_{n\geq 0}n!\|f_n\|^2_{L^2\left((\mathbb{T}^2)^n\right)}\,.
\end{equ}
Here the right-hand side also defines the $\Gamma L^2$ inner product.

\begin{remark}
    As opposed to the introduction, where $\eta$ was used as the formal solution, and $\underline{\eta}$ as the stationary law, we switch to using $\eta^N$ for the solution of \eqref{eq:toroN} and $\eta$ as the stationary law, to reduce visual clutter.
\end{remark}

\begin{remark}
	By this isometry between $L^2(\eta)$ and the Fock space $\Gamma L^2$, we will identify throughout the paper operators acting on either space by composing them with $I$ or $I^{-1}$ as appropriate (and without mentioning that we are doing so).
\end{remark}

\begin{remark}
	It is not strictly necessary to take the white noise to be mean-zero, but it is natural since the dynamics of the system are conservative.
	If we start equation (\ref{eq:toroN}) from a standard white noise $\eta_0$ (i.e. $\widehat{\eta}(0)$ is a standard Gaussian), then, for any future time $t$, we have $\widehat{\eta}_t(0)=\widehat{\eta}_0(0)$. 
        Moreover, $\widehat{\eta}_t(0)$ is independent of all other $\widehat{\eta}_t(k)$.
	Therefore we can just set it to $0$.
	In terms of Fourier kernels it means that for any $\varphi\in\mathscr H_n$ it holds that $\widehat{\varphi}(k_{1:n})$ is $0$ if any of the $k_1,\dots,k_n$ are $0$.
\end{remark}

\subsection{The Generator}

In this subsection we recall the generator and some properties of \eqref{eq:toroN} from \cite{cannizzaro2023gaussian}.
The following is (part of) Lemma 2.1 and Lemma 2.2. from \cite{cannizzaro2023gaussian}.

\begin{lemma}\label{le:generator}
    For every deterministic initial condition $\eta_0$, the solution $t\mapsto \eta^N_t$ of (\ref{eq:toroN}) exists globally in time and is a strong Markov process.
    Its generator can be written as $\mathcal{L}^N=\mathcal{L}_0+\mathcal{A}_+^N+\mathcal{A}_-^N$, where $\mathcal L_0$ is symmetric with respect to $\mathbb{P}$, $\mathcal (A_+^N)^*=-\mathcal A_-^N$, again with respect to $\mathbb P$, and the operators $\mathcal{L}_0$, $\mathcal{A}_+^N$ and $\mathcal{A}_-^N$ act on $\varphi\in\mathscr{H}_n$ as:
	\begin{equs}\label{eq:gen_Fourier}
		\widehat{\mathcal L_0 \varphi} (k_{1:n})&=-\tfrac12\lvert k_{1:n}\rvert ^2\widehat \varphi(k_{1:n}) \\
		\widehat{\mathcal A_+^N \varphi} (k_{1:n+1}) &= - \frac{\iota}{\pi(n+1)} \! \! \sum_{1 \le i < j \le n+1} \! \! \! \! \! \! \mathbb J^N_{k_i,k_j} \left[\mathfrak w \cdot (k_i + k_j)\right] \widehat \varphi \left( k_i + k_j, k_{\{1:n+1\}\setminus \{i,j\}} \right) \\
		\widehat{\mathcal A^N_- \varphi} (k_{1:n-1})&=-\frac{\iota \, n}{\pi}\,\sum_{j=1}^{n-1}(\mathfrak w\cdot k_j) \! \sum_{\ell+m=k_j} \!\mathbb J^N_{\ell,m} \, \widehat \varphi\left(\ell,m,k_{\{1:n-1\}\setminus\{j\}}\right)\,,
	\end{equs}
	where the indicator function $\mathbb J$ is given by
	\begin{equ}[eq:J]
		\mathbb J_{\ell, m} \eqdef \mathbf 1_{\{0< \lvert \ell \rvert\le N, 0<\lvert m\rvert\le N,0<\lvert \ell+m\rvert\le N\}}\,.
	\end{equ}
    Additionally, the mean-zero white noise $\eta$ defined by (\ref{eq:realcov}) satisfies $\eta \mathcal L = \eta$ and thus it is a stationary law of the Markov process whose evolution is governed by the generator $\mathcal L$ and whose initial condition is distributed according to $\eta$.
\end{lemma}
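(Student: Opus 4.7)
The statement is essentially Lemmas 2.1 and 2.2 of \cite{cannizzaro2023gaussian}, so the plan is to sketch the main steps rather than redo the full argument. There are three tasks: constructing the process, deriving the generator, and verifying the symmetry and stationarity claims.

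For well-posedness and the Markov property, I would exploit that the nonlinearity touches only finitely many Fourier modes (those with $\lvert k\rvert\leq N$). Writing $\eta^N=\Pi_N\eta^N+(1-\Pi_N)\eta^N$, the high-mode part solves a linear stochastic heat equation and is globally well-posed with the regularity of a spatial white noise; the low-mode part is a finite-dimensional SDE with polynomial drift and additive Brownian noise. Non-explosion follows from the key observation that the nonlinearity is in divergence form: pairing $\mathfrak{w}\cdot\Pi_N\nabla(\Pi_N\eta^N)^2$ against $\Pi_N\eta^N$ in $L^2(\mathbb{T}^2)$ produces a total derivative of $(\Pi_N\eta^N)^3$, which integrates to zero on the torus, so $\|\Pi_N\eta^N\|_{L^2}^2$ satisfies an Itô equation containing only dissipative and martingale contributions. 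Strong Markovianity and the stated Hölder regularity then follow from standard SDE/SPDE theory.

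The generator formula is obtained by applying Itô's formula to functionals $I_n(f)(\eta^N_t)$. The linear drift $\tfrac12\Delta\eta^N$ together with the quadratic variation produced by $\nabla\cdot\xi$ give the Ornstein--Uhlenbeck generator $\mathcal L_0$, i.e. the second quantization of $\tfrac12\Delta$; it preserves each chaos and acts on Fourier kernels by multiplication with $-\tfrac12\lvert k_{1:n}\rvert^2$. The nonlinear drift $\mathfrak w\cdot\Pi_N\nabla(\Pi_N\eta)^2$ contributes a first-order functional derivative paired with a degree-two polynomial in $\eta$. Expanding $(\Pi_N\eta)^2$ into its Wick components and applying the Wiener product formula to multiply the resulting second-chaos element against the first-chaos-valued derivative $DI_n(f)$ produces exactly two pieces, one raising and one lowering chaos degree by one. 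Translating to Fourier, the cutoffs become the indicator $\mathbb J^N_{\ell,m}$ of \eqref{eq:J}, the gradient produces the factor $\mathfrak w\cdot k$, and combinatorial/Fourier conventions supply the factors of $\iota$, $\pi$, $n$ and $1/(n+1)$, yielding $\mathcal A_+^N$ and $\mathcal A_-^N$ exactly as stated.

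Symmetry of $\mathcal L_0$ on each chaos is immediate from the reality of its Fourier multiplier. For $(\mathcal A_+^N)^* = -\mathcal A_-^N$, pair $\varphi\in\mathscr H_n$ with $\psi\in\mathscr H_{n+1}$ in $\Gamma L^2$, write both sides using the Fourier formulas and reindex via $\ell+m=k_j$; the sign comes from the reality $\hat\varphi(-k_{1:n})=\overline{\hat\varphi(k_{1:n})}$ combined with the $\iota$ in each formula. Stationarity of white noise then reduces to $\mathbb E(\mathcal L\varphi)=0$ for every $\varphi$: since expectation projects onto $\mathscr H_0$, one checks that $\mathcal L_0$ vanishes on $\mathscr H_0$, $\mathcal A_+^N$ raises chaos degree, and $\mathcal A_-^N$ applied to $\mathscr H_1$ is the empty sum ($j$ ranges over $1,\dots,n-1=0$), so no constant is produced. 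The main obstacle in this outline is the bookkeeping in identifying $\mathcal A_\pm^N$ as stated: while the conceptual picture (chaos-raising/lowering pieces of a divergence-form quadratic nonlinearity) is transparent, matching the exact kernels with their factors of $\iota$, $\pi$, $n$, and the correct symmetrization to land in $\Gamma L^2$ requires carrying the Wiener product formula and Fourier conventions through carefully.
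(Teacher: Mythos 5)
The paper does not prove this lemma: it is stated as (part of) Lemmas 2.1 and 2.2 of \cite{cannizzaro2023gaussian}, with only a remark that the sign in the Fourier multiplier of $\mathcal{A}_-$ corrects a typo there. You correctly identify the citation, and your sketch --- splitting into high and low Fourier modes for well-posedness with a divergence-form energy estimate for non-explosion, It\^o's formula and chaos expansion for the generator form, an adjoint computation in Fourier for $\mathcal A_+^* = -\mathcal A_-$, and verification that $\mathbb{E}(\mathcal L\varphi)=0$ for stationarity --- is a sound account of the standard argument carried out in the cited work.
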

Note that the expression for the Fourier multiplier of $\mathcal{A}_-$ given above differs from the one of \cite{cannizzaro2023gaussian} by a minus sign.
This is due to a typo in \cite{cannizzaro2023gaussian}.
\begin{remark}
    In fact, the process of Lemma~\ref{le:generator} assumes values in some Besov space of negative regularity, see \cite[Theorem 4.5]{CESnontriviality} where this is proved for the AKPZ equation.
\end{remark}

\section{Truncated Resolvent Equation}\label{se:truncres}
In this section we first rewrite $\mathcal D^N(\lambda)$, defined in \eqref{eq:LapD}, in terms of a resolvent.
Then we reduce the study of this quantity to estimating certain operators on Fock space.

The following proposition allows to express $\mathcal{D}^N$ just in terms of the stationary measure and the generator.
\begin{proposition} \label{pr:Laplace_bulk}
        Let  $\eta^N$ be the stationary solution to (\ref{eq:toroN}) started from mean-zero white noise.
	The Laplace transform of the bulk diffusivity is given by
	\begin{equ}
		\mathcal{D}^N(\lambda)=\frac{1}{\lambda^2}+\frac{1}{\lambda^2}\lvert\mathfrak w\rvert^2\mathbb{E}\left( \tilde{\mathcal{N}}^N[\eta]\left(\lambda N^2-\mathcal L^N\right)^{-1}\tilde{\mathcal{N}}^N[\eta]\right)\,,
	\end{equ}
	where $\tilde{\mathcal{N}}^N[\eta]\in\mathscr{H}_2$ is purely in the second chaos and given by
	\begin{equ} \label{eq:n}
		\tilde{\mathcal{N}}^N[\eta]\eqdef\frac{1}{2\pi}\int_{\mathbb T^2}\Pi_N{:}(\Pi_N \eta)^2{:}(x)\dd x = \sum_{\substack{\ell+ m=0\\0<\lvert\ell\rvert\leq N}} {:}\widehat\eta(\ell)\widehat\eta(m){:}
	\end{equ}
	and its kernel $\mathfrak n^N=I^{-1}\left(\tilde{\mathcal{N}}^N[\eta]\right)\in \Gamma L^2_2$ is given in Fourier by
	\begin{equ} \label{eq:n_Fourier}
		\widehat{\mathfrak n^N}(j_1,j_2) = \mathbf{1}_{\{0<\lvert j_1 \rvert \le N, \, j_1+j_2 = 0\}} \, .
	\end{equ}
\end{proposition}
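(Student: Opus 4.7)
My plan is to derive the resolvent formula by (i) rewriting the double spatial integral in (2.8) as a correlation between two copies of $\tilde{\mathcal N}^N$ using spatial translation invariance, (ii) replacing the two-time correlation by the Markov semigroup of $\eta^N$, and (iii) carrying out the remaining time integrals and Laplace transform explicitly, invoking the resolvent identity for the generator.

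For step (i), note that in (2.8) the first Wick square is integrated over $x$ while the second is evaluated at the fixed point $(0,0)$. Since mean-zero white noise is invariant under spatial shifts and the dynamics defined by (2.7) commute with these shifts, one has $\mathbf E\bigl(A(r,x)B(0,0)\bigr)=\mathbf E\bigl(A(r,x+z)B(0,z)\bigr)$ for every $z\in\mathbb T^2_1$, with $A,B\eqdef\Pi_N{:}(\Pi_N\eta^N)^2{:}$. Averaging over $z$ and applying Fubini, the inner spatial integral becomes $\tfrac{1}{(2\pi)^2}\mathbf E\bigl(\int A(r,\cdot)\,\dd x\cdot\int B(0,\cdot)\,\dd z\bigr)$, which, by the definition of $\tilde{\mathcal N}^N$, is exactly $\mathbf E\bigl(\tilde{\mathcal N}^N[\eta^N(r,\cdot)]\,\tilde{\mathcal N}^N[\eta^N(0,\cdot)]\bigr)$ up to a prefactor I will track at the end.

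For step (ii), stationarity and the Markov property from Lemma~\ref{le:generator} give $\mathbf E\bigl(F(\eta^N_r)G(\eta^N_0)\bigr)=\mathbb E\bigl(G\cdot e^{r\mathcal L^N}F\bigr)$; applied to $F=G=\tilde{\mathcal N}^N$ this yields the semigroup form. For step (iii), I separate the constant term $1$ in (2.8), which contributes $\int_0^\infty e^{-\lambda t}t\,\dd t=1/\lambda^2$, and compute the remaining triple integral by Fubini,
\begin{equation*}
\int_0^\infty e^{-\lambda t}\!\int_0^{t/N^2}\!\!\int_0^s f(r)\,\dd r\,\dd s\,\dd t=\int_0^\infty f(r)\!\int_r^\infty\!\int_{N^2 s}^\infty e^{-\lambda t}\,\dd t\,\dd s\,\dd r=\frac{1}{\lambda^2 N^2}\int_0^\infty e^{-\lambda N^2 r}f(r)\,\dd r,
\end{equation*}
with $f(r)=\mathbb E\bigl(\tilde{\mathcal N}^N\cdot e^{r\mathcal L^N}\tilde{\mathcal N}^N\bigr)$. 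The resolvent identity $\int_0^\infty e^{-\mu r}e^{r\mathcal L^N}\,\dd r=(\mu-\mathcal L^N)^{-1}$ is justified for $\mu=\lambda N^2>0$ by the fact that $\mathcal L^N$ generates a contraction semigroup on $L^2(\eta)$ and $\tilde{\mathcal N}^N\in\mathscr H_2$ has finite $L^2$ norm. The prefactor $N^2$ in (2.8) cancels the $N^2$ produced by the change of variables, and combining with the remaining $2|\mathfrak w|^2$ and the factor from step (i) yields the claimed coefficient $8\pi^2|\mathfrak w|^2/\lambda^2$.

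It remains to verify the Fourier expression for $\tilde{\mathcal N}^N$. Expanding $\Pi_N(\Pi_N\eta)^2$ in the basis $(e_k)$, each product $e_\ell e_m$ becomes a multiple of $e_{\ell+m}$; integrating over $\mathbb T^2$ selects the zero-frequency mode $\ell+m=0$, the cutoffs collapse to $|\ell|\le N$, and Wick ordering together with $\hat\eta(0)=0$ restricts further to $0<|\ell|\le N$. Symmetrizing in $(j_1,j_2)$ gives the kernel (3.3). The main bookkeeping obstacle is keeping track of the many factors of $2\pi$ and $N$ arising from the Fourier convention $e_k(x)=(2\pi)^{-1}e^{ik\cdot x}$, the rescaling (2.7), and the $1/(2\pi)$ in the definition of $\tilde{\mathcal N}^N$; none of the steps is conceptually deep, but all prefactors must align to produce precisely $8\pi^2|\mathfrak w|^2/\lambda^2$.
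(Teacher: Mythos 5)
Your approach is essentially correct and reaches the right conclusion, but it takes a noticeably different route from the paper in the final step, and you explicitly defer the prefactor bookkeeping, which is where all the $2\pi$'s must be pinned down.

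Your steps (i) and (ii) -- translation invariance in space to pass from the pointwise two-time correlation to correlations of $\tilde{\mathcal N}^N$, and then the Markov semigroup identity $\mathbf E\bigl(F(\eta^N_r)G(\eta^N_0)\bigr)=\mathbb E\bigl(G\,e^{r\mathcal L^N}F\bigr)$ -- are sound, and the translation-invariance step coincides with what the paper does. Where you diverge is step (iii). The paper does not pass through the semigroup at all; it instead uses the stationarity identity
\begin{equation*}
\int_0^t\!\!\int_0^s\mathbf E\bigl(\tilde{\mathcal N}^N[\eta^N(r)]\tilde{\mathcal N}^N[\eta^N(0)]\bigr)\,\dd r\,\dd s=\tfrac12\,\mathbf E\Bigl(\Bigl(\int_0^t\tilde{\mathcal N}^N[\eta^N(s)]\,\dd s\Bigr)^2\Bigr)
\end{equation*}
to reduce the problem to the Laplace transform of a variance of an additive functional, and then cites a ready-made lemma ([CESnontriviality, Lemma 5.1]) that converts this directly into the resolvent expression. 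You instead perform the Fubini interchange on the triple integral explicitly (your computation $\int_0^\infty e^{-\lambda t}\int_0^{t/N^2}\int_0^s f\,\dd r\,\dd s\,\dd t=\tfrac1{\lambda^2 N^2}\int_0^\infty e^{-\lambda N^2 r}f(r)\,\dd r$ is correct) and invoke the resolvent identity $\int_0^\infty e^{-\mu r}e^{r\mathcal L^N}\,\dd r=(\mu-\mathcal L^N)^{-1}$ directly. Both routes are valid; yours is self-contained given that $\mathcal L^N$ generates a $C_0$-contraction semigroup on $L^2(\eta)$ and $\tilde{\mathcal N}^N\in L^2$, which you correctly observe. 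The advantage of the paper's route is precisely that it outsources the functional-analytic justification (strong continuity, domain issues, interchangeability of $\mathbb E$ with the time integral in infinite dimensions) to the cited lemma; your route requires verifying these points from scratch, which you assert but do not check.

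The one genuine gap is that you do not actually carry the prefactor through. You state that the factors of $2\pi$ and $N$ "must align to produce precisely $8\pi^2|\mathfrak w|^2/\lambda^2$," but this is the only place where the proof can go wrong, so it cannot be waved away. Concretely: with the normalization $\tilde{\mathcal N}^N=\frac1{2\pi}\int_{\mathbb T^2}\Pi_N{:}(\Pi_N\eta^N)^2{:}\,\dd x$ and the torus of volume $4\pi^2$, the step of passing from $\int_{\mathbb T^2}\mathbf E\bigl(A(r,x)B(0,0)\bigr)\dd x$ to $\mathbf E\bigl(\tilde{\mathcal N}^N(r)\tilde{\mathcal N}^N(0)\bigr)$ involves exactly the competing factors $|\mathbb T^2|=4\pi^2$ and $(1/2\pi)^2$, and one additional factor of $2$ enters from the symmetrization of the double time integral (or, in your route, it does not -- which makes the cross-check between the two routes a useful sanity test you should perform). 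Spelling this out carefully, and confirming it against the Fourier-side verification of $(3.2)$ and $(3.3)$ (where the same $e_\ell e_m=\tfrac1{2\pi}e_{\ell+m}$ convention appears), is necessary before the proof can be considered complete.
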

\begin{proof}
	Multiplying (\ref{eq:bulkDN}) by $t$ yields
	\begin{equ}\label{eq:bulkDt}
		tD^N(t)=
		t+N^2\lvert\mathfrak w\rvert^2\int_0^{\frac{t}{N^2}}\int_0^s\int_{\mathbb T^2}\mathbf{E}\left(\Pi_N{:}(\Pi_N\eta^N)^2{:}(r,x)\Pi_N{:}(\Pi_N\eta^N)^2{:}(0,0) \right)\dd x \dd r \dd s\,.
	\end{equ}
	Since the stationary process $\eta^N$ is translation invariant in space, we can write the spatial integral in the expression above as
	\begin{equs}
		\frac{1}{4\pi^2}\int_{\mathbb T^2}\int_{\mathbb T^2}&\mathbf{E}\left(\Pi_N{:}(\Pi_N\eta^N)^2{:}(r,x+y)\Pi_N{:}(\Pi_N\eta^N)^2{:}(0,y)\right) \dd x \dd y\\
		&=\mathbf{E}\left(\left(\frac{1}{2\pi}\int_{\mathbb T^2}\Pi_N{:}(\Pi_N\eta^N)^2{:}(r,x)\dd x\right)\left(\frac{1}{2\pi}\int_{\mathbb T^2}\Pi_N{:}(\Pi_N\eta^N)^2{:}(0,x)\dd x\right)\right)\\
            &=\mathbf{E}\left(\tilde{\mathcal{N}}^N[\eta^N(r)]\tilde{\mathcal{N}}^N[\eta^N(0)]\right)\,.
	\end{equs}
	Using the stationarity of $\eta^N$ we note
	\begin{equ}
		\int_0^{\frac{t}{N^2}}\int_0^s\mathbf{E}\left(\tilde{\mathcal{N}}^N[\eta^N(r)]\tilde{\mathcal{N}}^N[\eta^N(0)]\right)\dd r\dd s=\frac12\mathbf{E}\left(\left(\int_0^\frac{t}{N^2}\tilde{\mathcal{N}}^N[\eta^N(s)]\dd s\right)^2\right) \, .
	\end{equ}
	Using this to rewrite (\ref{eq:bulkDt}) and applying the Laplace transform gives:
	\begin{equs}
		\mathcal D^N(\lambda)
		&=\int_0^\infty e^{-\lambda t}tD^N(t)\dd t\\
		&=\int_0^\infty e^{-\lambda t}\left(t+\frac{1}{2}N^2\lvert\mathfrak w\rvert^2\mathbf{E}\left(\int_0^\frac{t}{N^2}\tilde{\mathcal{N}}^N[\eta^N(s)]\dd s\right)^2\right)\dd t\\
		&=\frac{1}{\lambda^2}+\lvert\mathfrak w\rvert^2\frac{1}{\lambda^2}\mathbb{E}\left(\tilde{\mathcal{N}}^N(\lambda N^2-\mathcal L^N)^{-1}\tilde{\mathcal{N}}^N\right)\,,
	\end{equs}
	where in the final step we used \cite[Lemma 5.1]{CESnontriviality}, which allows us to go from an expectation with respect to the process to one just with respect to the stationary measure.
\end{proof}

Since $I(\tilde{\mathcal{N}}^N)=\mathfrak{n}^N$ by Proposition~\ref{pr:Laplace_bulk} and $I$ is an isometry of $L^2(\eta)$ and $\Gamma L^2$, the expectation above can be expressed as 
\[
\mathbb{E}\left(\tilde{\mathcal{N}}^N(\lambda N^2-\mathcal L^N)^{-1}\tilde{\mathcal{N}}^N\right)=\langle \mathfrak{n}^N,(\lambda N^2-\mathcal L^N)^{-1}\mathfrak{n}^N\rangle\,.
\]
The presence of the operators $\mathcal{A}^N_+$ and $\mathcal A_-^N$ in $\mathcal L^N$ means finding $(\lambda N^2-\mathcal L^N)^{-1}\mathfrak{n}^N$ involves all chaoses, even though $\mathfrak{n}^N$ is purely in $\mathscr H_2$.
This makes finding an explicit expression for this term very difficult.
To overcome this we will apply a technique first used in \cite{LQSY2004Superdiffusivity},
which consists in truncating the generator in chaos, and then using iterative estimates to obtain upper and lower bounds on the truncated resolvent.
We begin by defining the truncated generator:
let $P_{\leq k}$ be the projection onto $\Gamma L^2_{\leq k}\eqdef\bigoplus_{n=0}^k\mathscr{H}_n$, i.e. onto the first $k$ chaoses and $\mathcal L_k^N=P_{\leq k}\mathcal L^N P_{\leq k}$.
Let $\mathfrak h^{N,k}$ be the solution to the truncated generator equation
\begin{equ}\label{eq:truncGen}
	\left(\lambda-\mathcal L^N_k\right)\mathfrak h^{N,k}=\mathfrak n^N \, .
\end{equ}
The following lemma was first proved in \cite[Lemma 2.1]{LQSY2004Superdiffusivity}.
\begin{lemma}\label{le:yau}
	Let $\lambda>0$. Then for every $k,N\in\mathbb N$ we have that
	\begin{equ}
		\left\langle \mathfrak{n}^N,\mathfrak h^{N,2k+1}\right\rangle
        \leq
        \left\langle \mathfrak{n}^N,\left(\lambda-\mathcal L^N\right)^{-1}\mathfrak{n}^N\right\rangle
        \leq
        \left\langle \mathfrak{n}^N,\mathfrak h^{N,2k}\right\rangle\,,
	\end{equ}
	furthermore both bounds monotonically converge to $\left\langle \mathfrak{n}^N,\left(\lambda-\mathcal L^N\right)^{-1}\mathfrak{n}^N\right\rangle$ as $k\to\infty$.
\end{lemma}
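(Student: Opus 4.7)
The plan is a Schur-complement (continued-fraction) argument exploiting the tridiagonal block structure of $\mathcal L^N$ with respect to the chaos decomposition: $\mathcal L_0$ is block-diagonal (symmetric and negative semi-definite), $\mathcal A_+$ raises the chaos by one, $\mathcal A_-$ lowers it by one, and $\mathcal A_+^* = -\mathcal A_-$. Decomposing $\mathfrak h^{N,k} = h_0+\dots+h_k$ by chaos, the truncated equation $(\lambda - \mathcal L_k^N)\mathfrak h^{N,k} = \mathfrak n^N$ becomes the tridiagonal system
\[
(\lambda - \mathcal L_0)h_n - \mathcal A_+ h_{n-1} - \mathcal A_- h_{n+1} = \delta_{n,2}\,\mathfrak n^N,\qquad 0 \le n \le k,\quad h_{-1}=h_{k+1}=0,
\]
where $\mathcal L_0$ stands for its restriction to the appropriate chaos.

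Starting from the bottom, the substitution $h_k = (\lambda - \mathcal L_0)^{-1}\mathcal A_+ h_{k-1}$ into the chaos-$(k-1)$ equation produces the correction $-\mathcal A_-(\lambda - \mathcal L_0)^{-1}\mathcal A_+ = \mathcal A_+^*(\lambda - \mathcal L_0)^{-1}\mathcal A_+$, so the anti-symmetric coupling telescopes into a symmetric positive contribution. Iterating defines effective operators $\Gamma^{(j)}_k$ on chaos $j$ by
\[
\Gamma^{(k)}_k = \lambda - \mathcal L_0, \qquad \Gamma^{(j)}_k = \lambda - \mathcal L_0 + \mathcal A_+^*(\Gamma^{(j+1)}_k)^{-1}\mathcal A_+,
\]
each symmetric positive definite by induction. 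The analogous top-down elimination of $h_0, h_1$ yields a $k$-independent non-negative correction $C$, and at chaos $2$ one obtains $\Theta_k h_2 = \mathfrak n^N$ with $\Theta_k = \Gamma^{(2)}_k + C$; since $\mathfrak n^N$ lies in chaos $2$, $\langle \mathfrak n^N, \mathfrak h^{N,k}\rangle = \langle \mathfrak n^N, \Theta_k^{-1}\mathfrak n^N\rangle$.

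The alternating bounds then follow by comparing $\Gamma^{(2)}_{k+2}$ with $\Gamma^{(2)}_k$: at the base the new operator $\Gamma^{(k)}_{k+2}$ strictly exceeds $\Gamma^{(k)}_k = \lambda - \mathcal L_0$, and each descent step of the recursion (an inversion sandwiched by $\mathcal A_+^*, \mathcal A_+$) flips the direction of the operator inequality, so after $k-2$ steps the sign of $\Gamma^{(2)}_{k+2}-\Gamma^{(2)}_k$ is $(-1)^{k}$. Operator-monotonicity of inversion then makes $\langle \mathfrak n^N, \mathfrak h^{N,2k}\rangle$ decreasing and $\langle \mathfrak n^N, \mathfrak h^{N,2k+1}\rangle$ increasing in $k$, and the analogous one-step comparison (same sign $(-1)^{k}$ for $\Gamma^{(2)}_{k+1}-\Gamma^{(2)}_k$) places every odd truncation below every even one. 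Finally, the uniform bound $\|\mathfrak h^{N,k}\|_{\Gamma L^2} \le \lambda^{-1}\|\mathfrak n^N\|_{\Gamma L^2}$ gives weak subsequential limits in $\Gamma L^2$, which are identified as $(\lambda - \mathcal L^N)^{-1}\mathfrak n^N$ by testing the equation against test vectors $\varphi$ supported in finitely many chaoses and letting $k \to \infty$, so that both sequences converge to the true value. The main obstacle I anticipate is the parity bookkeeping in the continued fraction: verifying that the operator inequality really flips at every descent level, and that the top-down elimination of chaoses $0$ and $1$ decouples cleanly from $k$, so that the $k$-dependence is entirely captured by $\Gamma^{(2)}_k$ and the alternating pattern inherited from the bottom of the ladder is preserved.
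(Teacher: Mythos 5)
The paper does not prove this lemma; it is quoted from~\cite{LQSY2004Superdiffusivity} (Lemma 2.1), so there is no in-paper argument to compare against. Your proposal is, however, a complete and correct proof, and it follows the standard continued-fraction / Schur-complement route for this kind of statement.

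A few remarks on the details you sketched. The parity bookkeeping that you flagged as the main worry does work out cleanly: the bottom-level comparison $\Gamma^{(k)}_{k+2}\ge\Gamma^{(k)}_k=\lambda-\mathcal L_0$ holds because $\Gamma^{(k)}_{k+2}$ carries an extra non-negative term $\mathcal A_+^*(\Gamma^{(k+1)}_{k+2})^{-1}\mathcal A_+$, and each descent step of the recursion composes three order-preserving/reversing maps (inversion reverses, conjugation by $\mathcal A_+$ and adding $\lambda-\mathcal L_0$ preserve), for a net reversal. After $k-2$ descents the sign at chaos $2$ is $(-1)^{k}$, which gives exactly the even-decreasing, odd-increasing, odd-below-even pattern you describe. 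The top-down elimination of chaoses $0$ and $1$ is indeed $k$-independent: from the system (\ref{eq:system}) it produces the single non-negative correction $-\mathcal A_+(\lambda-\mathcal L_0)^{-1}\mathcal A_- = \mathcal A_+(\lambda-\mathcal L_0)^{-1}\mathcal A_+^*$, which matches what the paper encodes in (\ref{eq:SOEsol}) (and in fact vanishes on $\mathfrak n^N$ by (\ref{eq:dropAplusAminus}), though positivity alone suffices for your argument). Finally, for the identification of the limit, the uniform bound $\|\mathfrak h^{N,k}\|\le\lambda^{-1}\|\mathfrak n^N\|$ follows from dissipativity of $\mathcal L^N_k$ (which holds because $\mathcal L_0\le0$ and $\mathcal A_++\mathcal A_-$ is antisymmetric), and the testing argument works because for $\varphi$ supported on chaoses $\le m$ and $k\ge m+1$ one has $(\mathcal L^N_k)^*\varphi=(\mathcal L^N)^*\varphi$. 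Combined with the monotonicity of each parity class, this pins both sequences to the common limit $\langle\mathfrak n^N,(\lambda-\mathcal L^N)^{-1}\mathfrak n^N\rangle$, which completes the sandwich. The original reference phrases the estimate via variational characterizations of the resolvent rather than explicit Schur complements, but the content is the same; your version has the advantage of making the alternating operator inequalities fully explicit.
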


Equation (\ref{eq:truncGen}) written chaos-by-chaos takes the form
\begin{equs}\label{eq:system}
	(\lambda - \mathcal L_0) \mathfrak h^{N,k}_k &-\mathcal A_+^N\mathfrak h^{N,k}_{k-1} = 0 \, ,\\
	(\lambda - \mathcal L_0) \mathfrak h^{N,k}_{k-1} &-\mathcal A_+^N\mathfrak h^{N,k}_{k-2} -\mathcal A_-^N\mathfrak h^{N,k}_{k}= 0 \, ,\\
	& \dots \\
	(\lambda - \mathcal L_0) \mathfrak h^{N,k}_{2} &-\mathcal A_+^N\mathfrak h^{N,k}_{1} -\mathcal A_-^N\mathfrak h^{N,k}_{3} = \mathfrak n^N \, , \\
	(\lambda - \mathcal L_0) \mathfrak h^{N,k}_{1} & -\mathcal A_-^N\mathfrak h^{N,k}_{2} = 0 \, .
\end{equs}
This system of equations can be solved iteratively starting from the top, which leads to the following definition.
\begin{definition}
    For $k\ge 3$ we define the operators
	\begin{equ}
		\mathcal H_2^N \eqdef 0 \qquad \text{and} \qquad \mathcal H_k^N = \left(\mathcal A_+^N\right)^* \left(\lambda - \mathcal L_0 + \mathcal H_{k-1}^N\right)^{-1} \mathcal A_+^N \, .
	\end{equ}
\end{definition}
These operators are defined in an analogous way to the operators of the same name in \cite{CET2023stationary} and thus share some basic properties.
\begin{lemma}[Lemma 3.2 from \cite{CET2023stationary}]
	For $k\geq 3$, the operators $\mathcal H_k$ are positive definite and such that for all $n\in\mathbb N$ the operator $\mathcal H_k$ maps the $n$-th chaos into the $n$-th chaos.
\end{lemma}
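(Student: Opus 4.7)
The proof will proceed by induction on $k$, establishing both properties simultaneously. The base case is $k = 3$: here $\mathcal{H}_3 = (\mathcal{A}_+^N)^*(\lambda - \mathcal{L}_0)^{-1}\mathcal{A}_+^N$. From the Fourier formula (\ref{eq:gen_Fourier}) we see that $\mathcal{L}_0$ is a negative diagonal multiplier in the Fourier basis, so $-\mathcal{L}_0 \ge 0$ and $(\lambda - \mathcal{L}_0)$ is invertible with positive, chaos-preserving inverse. Then for every $\varphi$,
\begin{equ}
  \langle \varphi, \mathcal{H}_3 \varphi\rangle
  = \bigl\langle \mathcal{A}_+^N \varphi,\, (\lambda - \mathcal{L}_0)^{-1}\mathcal{A}_+^N\varphi\bigr\rangle \ge 0,
\end{equ}
so $\mathcal{H}_3$ is positive. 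For chaos preservation, (\ref{eq:gen_Fourier}) shows that $\mathcal{A}_+^N$ maps $\mathscr{H}_n$ into $\mathscr{H}_{n+1}$, hence $(\mathcal{A}_+^N)^* = -\mathcal{A}_-^N$ sends $\mathscr{H}_{n+1}$ into $\mathscr{H}_n$; since $\mathcal{L}_0$ is diagonal in the chaos decomposition, so is $(\lambda - \mathcal{L}_0)^{-1}$, and composing gives $\mathcal{H}_3(\mathscr{H}_n) \subseteq \mathscr{H}_n$.

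For the inductive step, assume that $\mathcal{H}_{k-1}$ is positive and preserves each chaos. Then $\lambda - \mathcal{L}_0 + \mathcal{H}_{k-1}$ is a sum of a strictly positive operator (bounded below by $\lambda I$, since $-\mathcal{L}_0 \ge 0$ and $\mathcal{H}_{k-1} \ge 0$) and therefore is invertible with positive inverse. Moreover, since each of $\mathcal{L}_0$ and $\mathcal{H}_{k-1}$ preserves each chaos, so does $\lambda - \mathcal{L}_0 + \mathcal{H}_{k-1}$; invertibility then implies that $(\lambda - \mathcal{L}_0 + \mathcal{H}_{k-1})^{-1}$ also preserves each chaos, because one can invert chaos by chaos.

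Positivity of $\mathcal{H}_k$ is then immediate: for any $\varphi$,
\begin{equ}
  \langle \varphi, \mathcal{H}_k \varphi\rangle
  = \bigl\langle \mathcal{A}_+^N \varphi,\, (\lambda - \mathcal{L}_0 + \mathcal{H}_{k-1})^{-1} \mathcal{A}_+^N \varphi\bigr\rangle \ge 0.
\end{equ}
For chaos preservation of $\mathcal{H}_k$ on $\mathscr{H}_n$, take $\varphi \in \mathscr{H}_n$. Then $\mathcal{A}_+^N \varphi \in \mathscr{H}_{n+1}$, the middle factor keeps it in $\mathscr{H}_{n+1}$ by the inductive hypothesis applied to the $(n+1)$-th chaos, and finally $(\mathcal{A}_+^N)^*$ sends it back to $\mathscr{H}_n$.

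There is no real obstacle here; the only thing to be slightly careful about is that the inductive hypothesis for chaos-preservation must be used at the chaos $\mathscr{H}_{n+1}$ (not $\mathscr{H}_n$) when proving the property for $\mathcal{H}_k$ at chaos $\mathscr{H}_n$, but this is automatic because the inductive hypothesis is universal in $n$. Analyticity / domain issues are also harmless since, working chaos-by-chaos and using the lower bound $\lambda - \mathcal{L}_0 + \mathcal{H}_{k-1} \ge \lambda I$, all operators involved are bounded when restricted to each fixed $\mathscr{H}_n$.
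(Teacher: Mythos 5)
The paper does not prove this lemma; it is cited verbatim from \cite{CET2023stationary}, so there is no in-paper proof to compare against. Your inductive argument is nevertheless the natural one, and it is essentially correct: the recursive definition of $\mathcal H_k$ makes induction on $k$ more or less forced, positivity follows from sandwiching a positive operator between $\mathcal A_+^N$ and its adjoint, and chaos-preservation follows from the fact that $\mathcal A_+^N$ raises chaos by one, $(\mathcal A_+^N)^* = -\mathcal A_-^N$ lowers it by one, and the middle resolvent is block-diagonal in the chaos decomposition.

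Two small caveats. First, what you actually establish is positive \emph{semi-}definiteness, not strict positive definiteness: $\mathcal A_+^N$ has a non-trivial kernel (any $\varphi\in\mathscr H_n$ supported on Fourier modes falling outside the region allowed by $\mathbb J^N$ is annihilated), and $\mathcal H_k$ kills the same kernel. This is harmless because $\mathcal H_k\ge 0$ is all that is used downstream (to guarantee $\lambda-\mathcal L_0+\mathcal H_k \ge \lambda$), so the word ``definite'' in the statement should be read in that weaker sense. Second, your closing remark that ``all operators involved are bounded when restricted to each fixed $\mathscr H_n$'' is not quite right: $\mathcal L_0$ is unbounded on each chaos since the multiplier $\tfrac12\lvert k_{1:n}\rvert^2$ is unbounded over $k_{1:n}\in\mathbb Z^{2n}$. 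What matters is that the resolvent $(\lambda-\mathcal L_0+\mathcal H_{k-1})^{-1}$ is bounded in norm by $\lambda^{-1}$, and that $\mathcal A_+^N$ is bounded on each $\mathscr H_n$ thanks to the Fourier cutoff $\mathbb J^N$, so the composite $\mathcal H_k$ is bounded on each chaos. That is the correct reason the domain issues are harmless; as written the sentence overclaims.
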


Solving the system of equations (\ref{eq:system}) we obtain
\begin{equ}\label{eq:SOEsol}
	\mathfrak h^{N,k}_2=\left((\lambda-\mathcal L_0)+\mathcal H^N_k-\mathcal A_+^N(\lambda-\mathcal L_0)^{-1}\mathcal A_-^N\right)^{-1}\mathfrak n^N\,.
\end{equ}
For the operator $-\mathcal A_+^N(\lambda-\mathcal L_0)^{-1}\mathcal A_-^N$ notice the following:
consider the subspace $\mathscr V$ of $\Gamma L^2$ generated by $\psi$ supported only on $k_{1:n}$ which satisfy $\sum_{i=1}^nk_i=0$ (for arbitrary $n$).
The operators $\mathcal A_+^N,\mathcal A_-^N$  and $\mathcal L_0$ all map $\mathscr V$ into $\mathscr V$ and the orthogonal complement of $\mathscr V$ into the orthogonal complement of $\mathscr V$.
Additionally, $\mathcal A_-^N$ vanishes on the intersection of $\mathscr V$ and $\Gamma L^2_2$.
Since $\mathfrak n^N$ is in $\mathscr V\cap\Gamma L^2_2$ this implies
\begin{equ}\label{eq:dropAplusAminus}
	\left((\lambda-\mathcal L_0)+\mathcal H^N_k-\mathcal A_+^N(\lambda-\mathcal L_0)^{-1}\mathcal A_-^N\right)^{-1}\mathfrak n^N=\left((\lambda-\mathcal L_0)+\mathcal H^N_k\right)^{-1}\mathfrak n^N\,.
\end{equ}

The following lemma summarises the result of this subsection.
\begin{lemma}\label{le:innerproduct}
	For all $\lambda>0$, $N\in\mathbb N$ and $k\ge2$ it holds that 
	\begin{equ}
		\left\langle \mathfrak{n}^N,\mathfrak h^{N,k}\right\rangle=\left\langle\mathfrak n^N,\left((\lambda-\mathcal L_0)+\mathcal H^N_k\right)^{-1}\mathfrak n^N\right\rangle \, .
	\end{equ}
\end{lemma}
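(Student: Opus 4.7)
The plan is to observe that the lemma is a direct consequence of two things already set up in the text: the chaos decomposition, and the iterative reduction of the truncated resolvent equation carried out around (\ref{eq:SOEsol})--(\ref{eq:dropAplusAminus}). The only genuinely new point is that the inner product collapses onto the second-chaos component of $\mathfrak h^{N,k}$.

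First I would note that $\mathfrak n^N\in\mathscr H_2$ by (\ref{eq:n}) and, since the decomposition $\Gamma L^2=\bigoplus_n\Gamma L^2_n$ is orthogonal,
$$\langle \mathfrak n^N,\mathfrak h^{N,k}\rangle=\langle \mathfrak n^N,\mathfrak h^{N,k}_2\rangle,$$
where $\mathfrak h^{N,k}_2$ denotes the projection of $\mathfrak h^{N,k}$ onto $\mathscr H_2$. Thus it is enough to identify $\mathfrak h^{N,k}_2$ with $((\lambda-\mathcal L_0)+\mathcal H_k^N)^{-1}\mathfrak n^N$.

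Next I would spell out the top-down elimination sketched in the text. The topmost equation in (\ref{eq:system}) yields $\mathfrak h^{N,k}_k=(\lambda-\mathcal L_0)^{-1}\mathcal A_+^N\mathfrak h^{N,k}_{k-1}$; substituting into the equation immediately below and using $\mathcal A_+^*=-\mathcal A_-$ produces $\mathcal H_3^N=(\mathcal A_+^N)^*(\lambda-\mathcal L_0)^{-1}\mathcal A_+^N$ as the coefficient of $\mathfrak h^{N,k}_{k-1}$. Iterating one chaos at a time, the recursive definition of $\mathcal H_j^N$ emerges automatically at each step: after eliminating chaoses $k,k-1,\ldots,3$ the coefficient appearing at the chaos-$2$ equation is exactly $\mathcal H_k^N$. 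Finally eliminating $\mathfrak h^{N,k}_1$ via the bottom equation $\mathfrak h^{N,k}_1=(\lambda-\mathcal L_0)^{-1}\mathcal A_-^N\mathfrak h^{N,k}_2$ produces equation (\ref{eq:SOEsol}).

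To conclude I would invoke the invariance argument given around (\ref{eq:dropAplusAminus}): the subspace $\mathscr V\subset\Gamma L^2$ of kernels supported on momenta summing to zero is preserved by $\mathcal L_0$, $\mathcal A_+^N$ and $\mathcal A_-^N$, and hence also by $\mathcal H_k^N$; since $\mathfrak n^N\in\mathscr V\cap\mathscr H_2$ and $\mathcal A_-^N$ annihilates $\mathscr V\cap\mathscr H_2$, the off-diagonal term $\mathcal A_+^N(\lambda-\mathcal L_0)^{-1}\mathcal A_-^N$ vanishes on $((\lambda-\mathcal L_0)+\mathcal H_k^N)^{-1}\mathfrak n^N$, so it may be dropped when inverting against $\mathfrak n^N$. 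Combining this with the first step yields the claimed identity. The lemma is essentially bookkeeping; the only point requiring care is matching the signs and operator compositions of the iterative elimination to the recursive definition of $\mathcal H_k^N$, which works cleanly thanks to the adjoint relation $\mathcal A_+^*=-\mathcal A_-$.
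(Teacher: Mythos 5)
Your proof is correct and follows essentially the same route as the paper, which simply cites (\ref{eq:truncGen}), (\ref{eq:SOEsol}) and (\ref{eq:dropAplusAminus}); the only thing you add is to make explicit the (trivial) orthogonality step $\langle \mathfrak n^N,\mathfrak h^{N,k}\rangle=\langle \mathfrak n^N,\mathfrak h^{N,k}_2\rangle$ coming from $\mathfrak n^N\in\mathscr H_2$, and to spell out the top-down elimination of (\ref{eq:system}) that the paper had already sketched before the lemma.
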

\begin{proof}
	This follows from (\ref{eq:truncGen}), (\ref{eq:SOEsol}) and (\ref{eq:dropAplusAminus}).
\end{proof}

So all that remains is to estimate the operators $\mathcal{H}_k$'s.

\section{Iterative Estimates}\label{se:estimates}

In this section we set up the iterative estimation scheme for the operators $\mathcal H_k$.
In order to do so, we first need to give some definitions.

The skew Laplacian $\mathcal L^{\mathfrak w}_0$ is the linear operator whose action on Fock space is given, for every $\varphi \in \Gamma L^2_n$, by
\begin{equ}\label{eq:skewed_laplacian}
	\widehat{\mathcal L_0^{\mathfrak w} \varphi}(k_{1:n})\eqdef-\tfrac12 (\mathfrak w \cdot k)^2_{1:n}\widehat \varphi(k_{1:n}) \, , \qquad \text{where} \qquad (\mathfrak w\cdot k)_{1:n}^2\eqdef \sum_{i=1}^n(\mathfrak w \cdot k_i)^2 \, .
\end{equ}
The sequence of the exponents of the logarithm in the upper and lower bounds is defined recursively by 
\begin{equ}
	\theta_2 \eqdef 0 \qquad \text{and} \qquad \theta_{k+1} = 1 - \frac{\theta_k}{2} \qquad  \text{for every $k\ge 3$} \, ,
\end{equ}
and admits the close formula
\begin{equ} \label{eq:explicit_alpha}
	\theta_k = \frac{2}{3}\left(1 - \left(\frac{-1}{2}\right)^{k-2}\right) \qquad \text{for every $k \ge 2$} \, .
\end{equ}
We now introduce some elementary functions, that morally approximate a logarithm to the power two third. Let $k, N, n \in \mathbb N$ and $\delta \in (0,1)$. For $k\ge2$, $x\in (0,\infty)$ and $z\in(1,\infty)$, we set
\begin{equ}\label{eq:log}
	\mathrm L(x,z) \eqdef \log(1+x^{-1})+z \, , \quad  \mathrm L_k(x,z) \eqdef \left(\mathrm L(x,z)\right)^{\theta_k} \, , \quad \mathrm L_k^N(x,z) \eqdef \mathrm L_k\left(\frac{x}{N^2},z\right) \, .
\end{equ}
The functions above are accompanied by polynomial coefficients, that morally correspond to errors made in the estimates. For $k\ge 1$, those are given by
\begin{equ} \label{eq:z_and_f}
	z_k(n)\eqdef K (n+k)^{\frac{9}{2}+\frac{3}{2}\delta} \qquad \text{and} \qquad f_k(n)\eqdef 3 (z_k(n))^{\frac{2}{3}} \, ,
\end{equ}
where $K$ is a sufficiently large positive constant depending on $\lvert \mathfrak w \rvert$. 
\begin{remark}
   The exact dependence of $K$ on $\lvert \mathfrak w \rvert$ is not important for us. However, following the proof, it is not difficult to check that the lower bound that $K$ must satisfy is of the kind $a\big(\lvert \mathfrak w \rvert \vee \frac{1}{\lvert \mathfrak w\rvert}\big)^b$ for some $a,b>0$.
\end{remark}
We also note the trivial identities
\begin{equ}[eq:ex_iter_chaos]
	z_k(n+1)=z_{k+1}(n) \qquad \text{and} \qquad f_k(n+1)=f_{k+1}(n) \, .
\end{equ}
Moreover, in the proofs we use the additional notation 
\begin{equs} \label{eq:tildeGamma}
	&\tilde\Gamma  \eqdef \tilde\Gamma(\ell,m,k_{2:n}) \eqdef \tfrac12\left(\vert\ell\rvert^2+\lvert m\rvert^2+\lvert k_{2:n} \rvert^2\right) \, ,\\
	&\tilde\Gamma^{\mathfrak w}  \eqdef \tilde\Gamma^{\mathfrak w}(\ell,m,k_{2:n})\eqdef \tfrac12\left((\mathfrak w \cdot\ell)^2+(\mathfrak w\cdot m)^2+(\mathfrak w\cdot k)^2_{2:n}\right) \, ,
\end{equs}
where $k_{2:n}$ means $k_2, \dots, k_n$ and $\ell$, $m$, $k_2,\dots,k_n \in \mathbb Z^2$ are Fourier modes. This is coherent with the notation used in \cite[Section 3]{CET2023stationary}.
By the Cauchy-Schwarz inequality, the symbols above can be compared as follows:
\begin{equ}\label{eq:tGamma_comp}
	0\le\tilde\Gamma^{\mathfrak w} \le \lvert \mathfrak w \rvert^2\tilde \Gamma \, .
\end{equ}
We are finally ready to give the definitions of the operators used for the iterative bounds.
\begin{definition}
	For $\lambda>0$ and $k\ge 2$
	\begin{equ}
		\mathcal S_k^N \eqdef
		\begin{cases}
			f_k(\mathcal N) \mathrm L^N_k\left(\lambda - \mathcal L_0,z_k(\mathcal N)\right) \qquad                                       & \text{if $k$ is odd,}  \\
			\frac{1}{f_k(\mathcal N)} \left[\mathrm L^N_k(\lambda - \mathcal L_0,z_k(\mathcal N)) - f_k(\mathcal N)\right] \qquad & \text{if $k$ is even,}
		\end{cases}
	\end{equ}
 where $\mathcal N$ is the number operator, acting on $\varphi\in\Gamma L_n$ by $\mathcal N\varphi=n\varphi$ for each $n\in\mathbb N$ and $\lambda$ is the Laplace variable.
\end{definition}

We will use the following (quite standard) partial ordering of operators:
\begin{definition}\label{de:def_ineq_operators}
	Given two self-adjoint operators $\mathcal A$ and $\mathcal B$ on $\Gamma L^2$,
	\begin{equ}
		\mathcal A \le \mathcal B
		\quad \Leftrightarrow \quad
		\forall \,n \, \,\forall \, \varphi \in \Gamma L^2_n \quad
        \langle \mathcal A \varphi, \varphi \rangle \le \langle \mathcal B \varphi, \varphi \rangle
		\quad \Leftrightarrow \quad
        \mathcal B - \mathcal A \geq 0\,,
	\end{equ}
    where the last statement is taken to mean that $\mathcal B - \mathcal A$ is a positive operator.
\end{definition}
For this partial ordering the following well-known lemma holds
\begin{lemma}
	For any two operators self-adjoint $\mathcal A$ and $\mathcal B$ on $\Gamma L^2$ it holds that
	\begin{equ}
		0<\mathcal A\leq \mathcal B
        \quad \Leftrightarrow \quad
        0<\mathcal B^{-1}\leq \mathcal A^{-1}\,.
	\end{equ}
\end{lemma}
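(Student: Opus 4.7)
The plan is to prove only the implication $(\Rightarrow)$; the reverse direction follows immediately by applying the same argument with $\mathcal A^{-1}$ and $\mathcal B^{-1}$ playing the roles of $\mathcal A$ and $\mathcal B$ (noting that $(\mathcal A^{-1})^{-1}=\mathcal A$, and positivity of $\mathcal A$ implies positivity of $\mathcal A^{-1}$ via the spectral theorem). The core idea is the standard trick of conjugating by $\mathcal A^{-1/2}$ to reduce every inequality to a comparison with the identity, where monotonicity of $x\mapsto x^{-1}$ on $(0,\infty)$ becomes trivial through functional calculus.

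Concretely, I would proceed as follows. Since $\mathcal A>0$, the positive square root $\mathcal A^{1/2}$ and its inverse $\mathcal A^{-1/2}$ are well-defined positive self-adjoint operators by the spectral theorem. For any $\psi$ in the appropriate domain, substituting $\varphi=\mathcal A^{-1/2}\psi$ into the defining inequality $\langle \mathcal A\varphi,\varphi\rangle\le \langle \mathcal B\varphi,\varphi\rangle$ and using self-adjointness of $\mathcal A^{-1/2}$ yields
\begin{equation*}
\langle \psi,\psi\rangle \le \langle \mathcal A^{-1/2}\mathcal B\mathcal A^{-1/2}\psi,\psi\rangle,
\end{equation*}
i.e., $I\le \mathcal C$, where $\mathcal C\eqdef \mathcal A^{-1/2}\mathcal B\mathcal A^{-1/2}$ is positive self-adjoint. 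Since $\mathcal C\ge I$, spectral calculus gives $\mathcal C^{-1}\le I$; but $\mathcal C^{-1}=\mathcal A^{1/2}\mathcal B^{-1}\mathcal A^{1/2}$, so for all $\psi$, $\langle \mathcal A^{1/2}\mathcal B^{-1}\mathcal A^{1/2}\psi,\psi\rangle \le \langle \psi,\psi\rangle$. Finally, substituting $\psi=\mathcal A^{-1/2}\varphi$ back in produces the desired $\langle \mathcal B^{-1}\varphi,\varphi\rangle\le \langle \mathcal A^{-1}\varphi,\varphi\rangle$ for all $\varphi$, which is $\mathcal B^{-1}\le \mathcal A^{-1}$. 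Strict positivity of $\mathcal B^{-1}$ comes from $\mathcal B>0$ (since $\mathcal B\ge \mathcal A>0$).

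The only genuine obstacle is domain-theoretic: the operators $\mathcal L_0$ and $\mathcal H_k$ appearing later are unbounded on $\Gamma L^2$, so Definition~\ref{de:def_ineq_operators} should really be read in the sense of quadratic forms, and one must check that $\mathcal A$, $\mathcal B$ are self-adjoint with the same form domain in order for $\mathcal A^{\pm 1/2}$, $\mathcal B^{-1}$ and the composition $\mathcal A^{-1/2}\mathcal B\mathcal A^{-1/2}$ to be defined unambiguously. In our application this is harmless because the operators at hand are diagonal on each chaos (once $\mathcal H_k$ is assembled), but writing the proof rigorously in the unbounded setting amounts to invoking the standard Kato representation theorem; the purely algebraic manipulation above is then verbatim, and I do not expect any deeper difficulty.
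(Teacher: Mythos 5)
The paper gives no proof of this lemma, labelling it merely \enquote{well-known}; your argument supplies the missing details via the standard $\mathcal A^{-1/2}$-conjugation trick, and it is correct. The algebra checks out (in particular the final substitution $\psi=\mathcal A^{-1/2}\varphi$ does turn $\langle \mathcal A^{1/2}\mathcal B^{-1}\mathcal A^{1/2}\psi,\psi\rangle\le\langle\psi,\psi\rangle$ into $\langle\mathcal B^{-1}\varphi,\varphi\rangle\le\langle\mathcal A^{-1}\varphi,\varphi\rangle$), and your final paragraph correctly flags the only real issue, namely that for unbounded operators the inequalities must be read as form inequalities with matching form domains and that $\mathcal A>0$ should be interpreted as bounded below by a positive constant so that $\mathcal A^{-1}$, $\mathcal A^{\pm1/2}$ are genuinely defined; in the paper's applications ($\lambda-\mathcal L_0+\mathcal H_k$ with $\lambda>0$) this holds, so the lemma applies as stated.
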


We can now state the bounds on the operators $\mathcal{H}_k$.
\begin{theorem}[iterative bounds]
	\label{th:iterative_bounds}
	For every $\delta \in (0,1)$ and for  every $k\in \mathbb Z$, $k\ge 0$ we have 
	\begin{equs}
		\mathcal H_{2k+3} & \le c_{2k+3} \left((-\mathcal L_0^{\mathfrak w})  \mathcal S_{2k+3}+ f_{2k+3}(\mathcal N)(-\mathcal L_0)\right)  \, ,\label{eq:iter_UB} \\
		\mathcal H_{2k+2} & \ge c_{2k+2} \left((-\mathcal L_0^{\mathfrak w}) \mathcal S_{2k+2} - \frac{1}{(\mathcal N +k)^{1+\delta}}(-\mathcal L_0)\right), \label{eq:iter_LB}
	\end{equs}
	where the constants $c_{2k+1}$ and $c_{2k+2}$ are defined recursively by setting, for $k\ge 1$,
	\begin{equ}
	    c_2 \eqdef \frac{1}{\pi(\lvert \mathfrak w \rvert^2 \vee 1)} \, ,
        \quad
        c_{2k+1} = \frac{3}{2\pi\lvert \mathfrak w\rvert}\frac{\left(1+\frac{1}{2k^{1+\delta}}\right)}{c_{2k}} > 1 \, ,
		\quad 
		c_{2k+2} =\frac{3}{2\pi\lvert \mathfrak w\rvert} \frac{\left(1- \frac{1}{2k^{1+\delta}}\right)}{\left(1+\frac{1}{2k^{1+\delta}}\right) c_{2k+1}} <1
	\end{equ}
    and the $\delta$ explicitly appearing in~(\ref{eq:iter_LB}) is the same as the one used for defining the $\mathcal S_k$'s and the $c_k$'s.
\end{theorem}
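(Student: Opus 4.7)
The proof proceeds by induction on $k$, alternating between upper and lower bounds: since the monotonicity principle $0<A\le B \Rightarrow 0<B^{-1}\le A^{-1}$ flips inequalities under inversion, the recursion $\mathcal H_{k+1} = (\mathcal A_+^N)^*(\lambda - \mathcal L_0 + \mathcal H_k)^{-1}\mathcal A_+^N$ turns each bound on $\mathcal H_k$ into one of opposite type on $\mathcal H_{k+1}$. The base case $k=0$ has two parts. The lower bound on $\mathcal H_2 = 0$ reduces to checking that the right-hand side of (\ref{eq:iter_LB}) is non-positive; this follows from $\mathcal S_2\le 0$, since $\theta_2 = 0$ forces $\mathrm L_2^N \equiv 1 \le f_2(\mathcal N)$ when $K$ is large. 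The upper bound on $\mathcal H_3 = (\mathcal A_+^N)^*(\lambda - \mathcal L_0)^{-1}\mathcal A_+^N$ is obtained by directly computing $\langle \mathcal H_3\varphi,\varphi\rangle$ in Fourier via Lemma~\ref{le:generator}. The square of $\mathcal A_+^N\varphi$ splits into a diagonal part (matched index pairs $\{i,j\}=\{i',j'\}$) and an off-diagonal remainder. The diagonal contribution is essentially $\sum_{\ell+m=k_i+k_j}\frac{(\mathfrak w\cdot(k_i+k_j))^2}{\lambda+\tilde\Gamma}$, which by a 2D lattice estimate is of order $(\mathfrak w\cdot(k_i+k_j))^2\log(N^2/\lambda)$—exactly the main term $(-\mathcal L_0^{\mathfrak w})\mathcal S_3$ since $\theta_3 = 1$—while the off-diagonal remainder is controlled by Cauchy--Schwarz and absorbed into $f_3(\mathcal N)(-\mathcal L_0)$.

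For the inductive step, I substitute the bound on $\mathcal H_k$ into the definition of $\mathcal H_{k+1}$ via monotonicity. The quadratic form to estimate becomes $\langle (\lambda - \mathcal L_0 + c_k(-\mathcal L_0^{\mathfrak w})\mathcal S_k \pm \mathrm{err})^{-1}\mathcal A_+^N\varphi, \mathcal A_+^N\varphi\rangle$, and after the same diagonal/off-diagonal split the diagonal piece is a sum of the shape $\sum_{\ell+m=k_1}\frac{(\mathfrak w\cdot k_1)^2}{\tilde\Gamma + c\,\tilde\Gamma^{\mathfrak w}\,\mathrm L_k^N(\tilde\Gamma,z_k(\mathcal N))}$, where the comparison $\tilde\Gamma^{\mathfrak w}\le |\mathfrak w|^2\tilde\Gamma$ from (\ref{eq:tGamma_comp}) is essential. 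Decomposing $\ell$ into components parallel and perpendicular to $\mathfrak w$ and rescaling the parallel component by $(\mathrm L_k^N)^{1/2}$ turns the integrand into a nearly isotropic $\frac{1}{|\cdot|^2}$; the angular integration contributes a full logarithm $\sim\mathrm L_k^N$, while the Jacobian of the rescaling contributes a factor $|\mathfrak w|^{-1}(\mathrm L_k^N)^{-1/2}$. The product is of order $\frac{1}{|\mathfrak w|}(\mathrm L_k^N)^{1 - \theta_k/2} = \frac{1}{|\mathfrak w|}\mathrm L_{k+1}^N$. This identity is precisely the origin of the recursion $\theta_{k+1} = 1 - \theta_k/2$ and of the prefactor $\frac{3}{2\pi|\mathfrak w|}$ in $c_{k+1}$, which records the angular integration together with the anisotropic Jacobian.

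Two principal obstacles deserve attention. First, the constants $c_k$ together with the correction factors $(1\pm (2k^{1+\delta})^{-1})$ must be chosen so that the iteration closes with a bounded cumulative product—ensured because $\sum_k k^{-1-\delta}<\infty$ makes $\prod_k (1+(2k^{1+\delta})^{-1})$ convergent—while each step's ``$\pm\mathrm{err}$'' contribution must be either absorbable into $f_{k+1}(\mathcal N)(-\mathcal L_0)$ (for upper bounds) or provably of lower order than the main piece (for lower bounds). Second and more delicate is the separate treatment of off-diagonal terms, which the authors flag as a methodological novelty relative to \cite{CET2023stationary}: these arise from the $O(n^2)$ cross-pairings in $(\mathcal A_+^N)^*\mathcal A_+^N$, and their Cauchy--Schwarz bounds are what force the polynomial coefficients $f_k(n)\sim(n+k)^{3+\delta}$ and $z_k(n)\sim(n+k)^{9/2+3\delta/2}$, whose specific exponents are calibrated so that repeated applications in the iteration do not overwhelm the logarithmic main term. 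Propagating these polynomial prefactors through the recursion while preserving the $\mathrm L_k^N$ asymptotics is, I expect, the bulk of the technical work.
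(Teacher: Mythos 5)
Your high-level architecture matches the paper's: alternating induction using resolvent monotonicity, base case from $\mathcal H_2=0$, a diagonal/off-diagonal decomposition of the quadratic form, and the observation that the $c_k$'s converge because $\sum_k k^{-1-\delta}<\infty$. However, the central heuristic you give for the origin of the recursion $\theta_{k+1}=1-\theta_k/2$ is incorrect as stated, and this is the crux of the entire estimate. You claim that the angular integration gives a factor $\sim\mathrm L_k^N$, the Jacobian of the anisotropic rescaling gives $(\mathrm L_k^N)^{-1/2}$, and then you write the product as $(\mathrm L_k^N)^{1-\theta_k/2}=\mathrm L_{k+1}^N$. But the product of those two factors is $(\mathrm L_k^N)^{1/2}=(\mathrm L^N)^{\theta_k/2}$, which is not $\mathrm L_{k+1}^N=(\mathrm L^N)^{1-\theta_k/2}$; and if instead the angular integral is meant to produce a ``full logarithm'' $\mathrm L^N$ (the un-iterated one), then $(\mathrm L^N)\cdot(\mathrm L_k^N)^{-1/2}=(\mathrm L^N)^{1-\theta_k/2}$ is correct, but then the intermediate expression $(\mathrm L_k^N)^{1-\theta_k/2}$ in your chain is wrong. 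More fundamentally, the rescaling $\ell_\parallel\mapsto\ell_\parallel\sqrt{\mathrm L_k^N}$ cannot be performed as a simple change of variables because $\mathrm L_k^N$ itself depends on $|\ell|$, i.e. on both the parallel and perpendicular components; this is not a constant Jacobian.

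The paper avoids this issue by a different route that you do not identify: it converts the sum to an integral (Lemma~\ref{le:replacement}), writes the latter in polar coordinates, carries out the angular integral \emph{exactly} via the arctangent identity of Lemma~\ref{le:arctan_parameter}, obtaining a factor $\frac{\pi}{\sqrt{1+|\mathfrak w|^2 u^{\theta_k}}}$, and then performs the radial integral after the substitution $u=\mathrm L^N(N^2(r+\alpha_N),z)$, which has the elementary explicit primitive $\frac{u^{\theta_{k+1}}}{\theta_{k+1}}$. This is where the recursion $\theta_{k+1}=1-\tfrac{\theta_k}{2}$ and the prefactor $\frac{3\pi}{2|\mathfrak w|}$ (from $\theta_{k+1}\ge\tfrac{2}{3}$) actually come from; the sub-leading term $\mathrm I_2$ is also estimated from this exact computation and gives the $z^{-\theta_k/2}$-corrections entering the $c_k$'s. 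Your sketch would need to be repaired along these lines. Finally, note a minor structural deviation: in the paper the base case is solely the lower bound $\mathcal H_2=0$; the upper bound for $\mathcal H_3$ is then produced by the inductive step (with $\mathcal H_2=0$ plugged in), not computed from scratch. This is effectively equivalent to your ``direct computation'' but differs in that the paper feeds the trivial inductive hypothesis into exactly the same machinery, so there is no separate ``$\mathcal H_3$ base case.''
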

Note that compared to the structure in previous works (\cite{CHSTDiffusionCurl}, \cite{CET2023stationary}), the upper and lower bounds are now split into a part multiplying the skew Laplacian $\mathcal L_0^{\mathfrak w}$ and a part multiplying the full Laplacian $\mathcal L_0$.
The second part is used to estimate the off-diagonals, i.e. we do not absorb them into the diagonal terms, but keep them separate.

\begin{remark}
	The two inequalities on odd and even terms of the sequence $(c_k)_{k\ge2}$ can be checked by induction, separately on odd and even terms, after distinguishing the two cases $\lvert \mathfrak w \rvert > 1 $ and $\lvert \mathfrak w \rvert < 1$. Also, note that $\sum_k \frac{1}{2k^{1+\delta}}$ is summable. This implies that, as $k\to\infty$, $c_{2k+1}$ and $c_{2k+2}$ converge to two positive and finite limits, respectively larger and smaller than $1$.
\end{remark}

We now proceed with some preliminary Lemmas, which will be used to prove Theorem~\ref{th:iterative_bounds}.

\begin{lemma}[Decomposition in diagonal and off-diagonal terms]
	\label{le:dec_diag_off}
	Let $\mathcal Z$ be a diagonal operator on $\Gamma L^2$ with Fourier multiplier $\zeta = (\zeta_n)_{n\in \mathbb N}$. Then, for every $\varphi \in \Gamma L^2_n$, the following decomposition holds:
	\begin{equ}
		\left\langle \left(\mathcal A_+^N\right)^*\mathcal Z \mathcal A^N_+ \varphi,\varphi\right\rangle
        =
        \left\langle \left(\mathcal A_+^N\right)^*\mathcal Z \mathcal A^N_+\varphi,\varphi\right\rangle_{\mathrm{Diag}} +\sum_{i=1}^2 \left\langle \left(\mathcal A_+^N\right)^*\mathcal Z \mathcal A^N_+\varphi,\varphi\right\rangle_{\mathrm{off}_i} \, ,
	\end{equ}
	where the diagonal terms, given by the first summand, are defined as
	\begin{equ}[eq:diag]
		\left\langle \left(\mathcal A_+^N\right)^*\mathcal Z \mathcal A^N_+\varphi,\varphi\right\rangle_{\mathrm{Diag}}\eqdef \frac{n!\, n}{2\pi^2} \sum_{k_{1:n}}(\mathfrak w\cdot k_1)^2\overline{\widehat\varphi(k_{1:n})}{\widehat\varphi(k_{1:n})}\!\!\!\sum_{\ell+m=k_1}  \mathbb J^N_{\ell,m} \zeta_{n+1}(\ell,m,k_{2:n}) \, ,
	\end{equ}
	while the off-diagonal terms of type $1$ and $2$ are respectively given by
	\begin{multline}\label{eq:off_diag1}
		\left\langle \left(\mathcal A_+^N\right)^*\mathcal Z \mathcal A^N_+\varphi,\varphi\right\rangle_{\mathrm{off}_1}  \eqdef \, \frac{n! \, n (n-1)}{\pi^2}\sum_{k_{1:n+1}} (\mathfrak w\cdot (k_1+k_2))  (\mathfrak w\cdot (k_1+k_3))  \times\\
		\times
		\overline{\widehat{\varphi}(k_1+k_2,k_3,k_{4:n+1})}{\widehat\varphi(k_1+k_3,k_2,k_{4:n+1})} \mathbb J^N_{k_1,k_2}\mathbb J^N_{k_1,k_3} \zeta_{n+1}(k_{1:n+1})
	\end{multline}
	and
	\begin{multline}\label{eq:off_diag2}
		\left\langle \left(\mathcal A_+^N\right)^*\mathcal Z \mathcal A^N_+\varphi,\varphi \right\rangle_{\mathrm{off}_2}  \eqdef  \, \frac{n! \, n (n-1) (n-2)}{4\pi^2}\sum_{k_{1:n+1}} (\mathfrak w\cdot (k_1+k_2))  (\mathfrak w\cdot (k_3+k_4)) \times\\
		\times \overline{\widehat \varphi(k_1+k_2,k_{3:4},k_{5:n+1})}\widehat \varphi(k_3+k_4,k_{1:2},k_{5:n+1})\mathbb J^N_{k_1,k_2}\mathbb J^N_{k_3,k_4} \zeta_{n+1}(k_{1:n+1})\, .
	\end{multline}
\end{lemma}
The above decomposition is the same as the one used in the proof of \cite[Lemma 2.5]{cannizzaro2023gaussian} and we refer to \cite[Lemma 3.6]{CET2023stationary} for the combinatorics needed for the exact expressions of the multiplicative factors in front of the sums.

The following two lemmas will be used to bound the diagonal and off-diagonal parts respectively by estimating sums over only one $\mathbb Z^2$-valued variable.
\begin{lemma}\label{le:control_diag}
	Let $\mathcal Z_1$ and $\mathcal Z_2$ be two positive  operators on $\Gamma L^2$, diagonal both in chaos and in Fourier, with Fourier multipliers $\zeta^i = (\zeta^i_n)_{n\in \mathbb N}$, for $i=1,2$. If for every $n\in \mathbb N$ and for every $k_{1:n} \in \mathbb Z^{2n}$
	\begin{equ} \label{eq:hyp_control_diag}
		\sum_{\ell+m=k_1}\mathbb J^N_{\ell,m}\zeta^1_{n+1}(\ell,m,k_{2:n})  \le \zeta^2_n(k_{1:n}) \, ,
	\end{equ}
	then for every $\varphi \in \Gamma L^2_n$
	\begin{equ}
		\left\langle \left(\mathcal A_+^N\right)^* \mathcal Z_1 \mathcal A^N_+ \varphi, \varphi \right\rangle_{\mathrm{Diag}} \le \frac{1}{\pi^2}\left\langle (-\mathcal L_0^{\mathfrak w}) \mathcal Z_2 \varphi, \varphi \right\rangle \, .
	\end{equ}
	Moreover, a reverse inequality in the assumption implies a reverse inequality in the result.
\end{lemma}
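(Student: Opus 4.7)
The statement is essentially a bookkeeping exercise once one unpacks the diagonal expression~(\ref{eq:diag}), so the plan is to manipulate that expression term by term until the skew Laplacian multiplied by $\mathcal Z_2$ appears naturally. First I would start from the explicit formula
\begin{equ}
\left\langle \left(\mathcal A_+^N\right)^* \mathcal Z_1 \mathcal A^N_+ \varphi, \varphi \right\rangle_{\mathrm{Diag}} = \frac{n!\, n}{2\pi^2} \sum_{k_{1:n}} (\mathfrak w \cdot k_1)^2 \lvert \hat\varphi(k_{1:n})\rvert^2 \sum_{\ell+m=k_1} \mathbb J^N_{\ell,m}\, \zeta^1_{n+1}(\ell,m,k_{2:n})
\end{equ}
and then apply the pointwise hypothesis~(\ref{eq:hyp_control_diag}) inside the outer sum, which is legitimate because all quantities involved are non-negative (here I use positivity of $\mathcal Z_1$ and $\mathcal Z_2$).

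The second step is a symmetrization argument. Since $\varphi \in \Gamma L^2_n$ is symmetric, $\lvert \hat\varphi(k_{1:n})\rvert^2$ is symmetric in its arguments, and since $\mathcal Z_2$ is a diagonal operator on the $n$-th chaos its multiplier $\zeta^2_n$ is also symmetric. Hence I can average the singled-out factor $(\mathfrak w \cdot k_1)^2$ over the $n$ positions to rewrite the upper bound as
\begin{equ}
\frac{n!}{2\pi^2} \sum_{k_{1:n}} (\mathfrak w \cdot k)^2_{1:n}\, \zeta^2_n(k_{1:n})\, \lvert \hat\varphi(k_{1:n})\rvert^2.
\end{equ}
By the definition of the skew Laplacian in~(\ref{eq:skewed_laplacian}) and the fact that $\langle \cdot,\cdot \rangle_{\Gamma L^2_n}$ picks up a factor $n!$ relative to the $L^2$ inner product on $(\mathbb T^2)^n$, this last expression is precisely $\frac{1}{\pi^2}\langle (-\mathcal L_0^{\mathfrak w}) \mathcal Z_2 \varphi, \varphi\rangle$, which is the bound claimed.

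For the reverse inequality version, each of these manipulations is an identity except for the application of~(\ref{eq:hyp_control_diag}), which is the only step that uses an inequality. Reversing the inequality there reverses the inequality in the conclusion, with no further changes needed. The main thing to be careful about is not so much any deep difficulty but rather the bookkeeping: making sure that the combinatorial prefactor $\frac{n!\, n}{2\pi^2}$ in~(\ref{eq:diag}) matches up correctly with the $\frac{1}{\pi^2}$ stated in the lemma after the symmetrization, and that the symmetrization is indeed legitimate (which relies on the symmetry of $\zeta^2_n$, itself a consequence of $\mathcal Z_2$ being a well-defined diagonal operator on symmetric tensors). No deeper ingredient, in particular nothing about the Fourier indicator $\mathbb J^N$ beyond non-negativity, is needed.
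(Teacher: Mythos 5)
Your proof is correct and follows the same route as the paper: apply the hypothesis pointwise inside the diagonal expression, symmetrize the factor $(\mathfrak w\cdot k_1)^2$ over the $n$ slots using the symmetry of $\lvert\hat\varphi\rvert^2$ and $\zeta^2_n$, and identify the result with $\frac{1}{\pi^2}\left\langle(-\mathcal L_0^{\mathfrak w})\mathcal Z_2\varphi,\varphi\right\rangle$. If anything, you are slightly more explicit than the paper about the symmetrization step and about why $\zeta^2_n$ may be taken symmetric, which the paper compresses into a single chain of equalities.
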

\begin{proof}
    Recalling the expressions of $\mathcal A_+^N$, $\left(\mathcal A_+^N\right)^*$ and $\mathcal L_0^{\mathfrak w}$, given in Lemma~\ref{le:generator} and in (\ref{eq:skewed_laplacian}) respectively, and using hypothesis (\ref{eq:hyp_control_diag}), we obtain:
	\begin{equs}
		&\left\langle \left(\mathcal A_+^N\right)^* \mathcal Z_1 \mathcal A^N_+ \varphi, \varphi \right\rangle_{\mathrm{Diag}} = \frac{n!\, n}{2\pi^2} \sum_{k_{1:n}}(\mathfrak w\cdot k_1)^2\lvert\widehat\varphi(k_{1:n})\rvert^2\!\!\!\sum_{\ell+m=k_1}\mathbb J^N_{l,m}\zeta^1_{n+1}(\ell,m,k_{2:n}) \le \\
		& \le \frac{n!\, n}{2\pi^2} \sum_{k_{1:n}}(\mathfrak w\cdot k_1)^2\lvert\widehat\varphi(k_{1:n})\rvert^2\zeta^2_n(k_{1:n}) = \frac{n!}{\pi^2} \sum_{k_{1:n}}(-\mathcal L_0^{\mathfrak w})\lvert\widehat\varphi(k_{1:n})\rvert^2\zeta^2_n(k_{1:n}) = \\
		& = \frac{1}{\pi^2}\left\langle (-\mathcal L_0^{\mathfrak w}) \mathcal Z_2 \varphi,\varphi \right\rangle \, .
	\end{equs}
	The reverse inequality follows by repeating the exact same steps above with the inequality in the other direction.
\end{proof}

\begin{lemma}\label{le:control_offdiag}
	Let $\mathcal Z_1$ and $\mathcal Z_2$ be two positive  operators on $\Gamma L^2$, diagonal both in chaos and in Fourier, with Fourier multipliers $\zeta^i = (\zeta^i_n)_{n\in \mathbb N}$, for $i=1,2$.

	If for every $n\in \mathbb N$ and for every $k_{1:n} \in \mathbb Z^{2n}$
	\begin{equ}
		\lvert k_2\rvert\sum_{\ell+m = k_1} \frac{1}{\lvert m \rvert}\mathbb J^N_{l,m} \zeta^1_{n+1}(\ell,m,k_{2:n})
        \le
        \zeta^2_n(k_{1:n}) \, ,
	\end{equ}
	then for every $\varphi \in \Gamma L^2_n$
	\begin{equ} \label{eq:control_offdiag_1}
		\left\lvert\left\langle \left(\mathcal A_+^N\right)^* \mathcal Z_1 \mathcal A^N_+ \varphi, \varphi \right\rangle_{\mathrm{off_1}}\right\rvert
        \le
        \frac{2\lvert \mathfrak w \rvert^2 (n-1)}{\pi^2} \left\langle (-\mathcal L_0) \mathcal Z_2 \varphi, \varphi \right\rangle \, .
	\end{equ}

	If for every $n\in \mathbb N$ and for every $k_{1:n} \in \mathbb Z^{2n}$
	\begin{equ}
		\sum_{\ell+m=k_1}\mathbb J^N_{\ell,m} \frac{\zeta^1_{n+1}(\ell, m,k_{2:n})}{\lvert \ell \rvert \lvert m \rvert}
        \le
        \frac{1}{\lvert k_1\rvert \sqrt{\lvert k_{1:n} \rvert^2}}\zeta^2_n(k_{1:n}) \, ,
	\end{equ}
	then for every $\varphi \in \Gamma L^2_n$
	\begin{equ}\label{eq:control_offdiag_2}
		\left\lvert\left\langle \left(\mathcal A_+^N\right)^* \mathcal Z_1 \mathcal A^N_+ \varphi, \varphi \right\rangle_{\mathrm{off_2}}\right\rvert
        \le
        \frac{\lvert \mathfrak w \rvert^2  (n-1) (n-2)}{\pi^2} \left\langle (-\mathcal L_0) \mathcal Z_2 \varphi, \varphi \right\rangle \, .
	\end{equ}
\end{lemma}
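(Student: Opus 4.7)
My plan is to establish both \eqref{eq:control_offdiag_1} and \eqref{eq:control_offdiag_2} by the same three-step strategy. \emph{Step 1.} I would decouple the two distinct $\varphi$-kernels appearing in each off-diagonal expression via a weighted Cauchy--Schwarz inequality, where the weight $w$ is chosen so as to be inverted by the natural symmetry which swaps the two kernels (namely $k_2\leftrightarrow k_3$ for type 1 and $(k_1,k_2)\leftrightarrow(k_3,k_4)$ for type 2) and leaves the remaining factors $\mathbb J^N\mathbb J^N\zeta^1_{n+1}$ invariant. By this symmetry the two sums emerging from Cauchy--Schwarz coincide, so the off-diagonal modulus is bounded by a single diagonal-like sum involving $|\hat\varphi|^2$ and the weight $w$. \emph{Step 2.} I then relabel $j_1:=k_1+k_2$ as the first argument of $\hat\varphi$ and rewrite the inner sum (over $\ell+m=j_1$) in exactly the form appearing on the left-hand side of the respective hypothesis, after dropping the extraneous indicators ($\mathbb J^N_{k_1,k_3}$ for type 1, $\mathbb J^N_{k_3,k_4}$ for type 2) via the trivial bound $\mathbb J^N\le 1$. \emph{Step 3.} Finally, I apply the hypothesis to the inner sum, bound $(\mathfrak w\cdot j_1)^2\le|\mathfrak w|^2|j_1|^2$, and use the symmetry of $|\hat\varphi(j_{1:n})|^2\zeta^2_n(j_{1:n})$ in its $n$ arguments to replace $|j_1|^2$ (or $|j_2|^2+|j_3|^2$) by $\tfrac{1}{n}|j_{1:n}|^2=\tfrac{2}{n}(-\mathcal L_0)$, which produces the desired bound.

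The correct weight for type 1 is $w=|k_3|/|k_2|$: after the procedure above the inner sum becomes $|j_2|\sum_{\ell+m=j_1}\frac{1}{|m|}\mathbb J^N_{\ell,m}\zeta^1_{n+1}(\ell,m,j_{2:n})$, which is precisely the left-hand side of the first hypothesis. For type 2 I would take $w=\frac{|k_3||k_4|}{|k_1||k_2|}$, which produces the factor $\frac{1}{|\ell||m|}$ matching the second hypothesis; the hypothesis then yields
\[\tfrac{n!\,n(n-1)(n-2)}{4\pi^2}|\mathfrak w|^2\sum_{j_{1:n}}|\hat\varphi(j_{1:n})|^2\,\frac{|j_1||j_2||j_3|}{\sqrt{|j_{1:n}|^2}}\zeta^2_n(j_{1:n}).\]
The key elementary estimate here is $\frac{|j_1||j_2||j_3|}{\sqrt{|j_{1:n}|^2}}\le\frac{|j_2|^2+|j_3|^2}{2}$, which follows from $|j_1|\le\sqrt{|j_{1:n}|^2}$ together with $2|j_2||j_3|\le|j_2|^2+|j_3|^2$; a final symmetrisation then replaces $\tfrac{|j_2|^2+|j_3|^2}{2}$ by $\tfrac{|j_{1:n}|^2}{n}$ and produces the factor $(-\mathcal L_0)$.

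The main subtlety will be the choice of weights $w$: they must simultaneously be inverted by the natural exchange symmetry, so that the two sums emerging from the weighted Cauchy--Schwarz coincide, and introduce precisely the reciprocal factors $\frac{1}{|m|}$ (respectively $\frac{1}{|\ell||m|}$) needed to match the corresponding hypothesis. Once the weights are correctly selected, everything else reduces to routine applications of the hypothesis, the modulus bound $|\mathfrak w\cdot j_1|\le|\mathfrak w||j_1|$, and standard symmetrisation of the symmetric kernels $|\hat\varphi(j_{1:n})|^2\zeta^2_n(j_{1:n})$ in the variables $j_{1:n}$.
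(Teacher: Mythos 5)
Your proposal is correct and, despite the different presentation, takes essentially the same approach as the paper: the weighted Cauchy--Schwarz with weight $w=|k_3|/|k_2|$ (resp.\ $w=\tfrac{|k_3||k_4|}{|k_1||k_2|}$) is exactly what the paper encodes implicitly via the auxiliary quantity $\Phi(k_{1:n})=\prod_i|k_i|\,|\hat\varphi(k_{1:n})|$ together with the unweighted bound $|ab|\le\tfrac12(a^2+b^2)$, and the subsequent relabelling, use of the hypothesis, and final symmetrisation all match. Your conclusion of the type-$2$ estimate, which applies $2|j_2||j_3|\le|j_2|^2+|j_3|^2$ before symmetrising, is in fact marginally sharper (by a factor of $2$) than the paper's route through $|j_1||j_2|$, but either way yields the stated inequality.
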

\begin{proof}
	Following ideas of \cite{CET2023stationary}, we define
	\begin{equ}
		\forall n\in \mathbb N \qquad \Phi(k_{1:n}) \eqdef \prod_{i=1}^{n}\lvert k_i \rvert \lvert \widehat\varphi(k_{1:n})\rvert \, . \label{eq:large_phi}
	\end{equ}

	We start with the estimate for the off-diagonal terms of type $1$, i.e. \eqref{eq:control_offdiag_1}.
	In order to have more easily readable expressions, we give a name to the coefficient (its exact expression is only used at the end of the proof):
	\begin{equ}
		c_{\mathrm{off}_1}(n) \eqdef \frac{\lvert \mathfrak w \rvert^2 n! \, n (n-1)}{\pi^2} \, .
	\end{equ}
	By expanding the left-hand side of~(\ref{eq:control_offdiag_1}), applying the Cauchy-Schwarz inequality and using definition~(\ref{eq:large_phi}), we get
	\begin{equs}
		&\left\lvert \left\langle \mathcal Z \mathcal A^N_+\varphi,\mathcal A^N_+\varphi\right\rangle_{\mathrm{off}_1} \right\rvert
		\stackrel{\text{\scriptsize{C-S}}}{\le}
		c_{\mathrm{off}_1}(n) \sum_{k_{1:n+1}} \lvert k_1+k_2 \rvert \lvert k_1+k_3 \rvert  \times \\
		&\qquad \qquad \times  \lvert \widehat{\varphi}(k_1+k_2,k_3,k_{4:n+1})\rvert \lvert \widehat\varphi(k_1+k_3,k_2,k_{4:n+1})\rvert \mathbb J^N_{k_1,k_2}\mathbb J^N_{k_1,k_3} \zeta^1_{n+1}(k_{1:n+1})  \\
		&= c_{\mathrm{off}_1}(n)\sum_{k_{1:n+1}} \frac{\Phi(k_1+k_2,k_3,k_{4:n+1}) \Phi(k_1+k_3,k_2,k_{4:n+1})}{\lvert k_2 \rvert \lvert k_3 \rvert \prod_{i=4}^{n+1}\lvert k_i \rvert^2} \mathbb J^N_{k_1,k_2}\mathbb J^N_{k_1,k_3} \zeta^1_{n+1}(k_{1:n+1}) \, . \\
	\end{equs}
	We now recall the elementary inequality $\lvert ab \rvert \le a^2/2 +b^2/2$, true for any $a,b \in \mathbb R$, and apply it with the choice $a=\Phi(k_1+k_2,k_3,k_{4:n+1})\mathbb J^N_{k_1,k_2}$ and $b$ analogous. By symmetry, the second addend that we obtain by this procedure is actually equal to the first one, and so we obtain the upper bound
	\begin{equ}
		c_{\mathrm{off}_1}(n) \sum_{k_{1:n+1}} \frac{(\Phi(k_1+k_2,k_3,k_{4:n+1}))^2}{\lvert k_2 \rvert \lvert k_3 \rvert \prod_{i=4}^{n+1}\lvert k_i \rvert^2} \mathbb J^N_{k_1,k_2} \zeta^1_{n+1}(k_{1:n+1}) \, .
	\end{equ}
	Expanding the definition of $\Phi$, applying the change of variables $k_{1:n+1}\mapsto (\ell,m,k_{2:n})$ and finally using the hypothesis gives the desired upper bound:
	\begin{equs}
		&  c_{\mathrm{off}_1}(n) \sum_{k_{1:n+1}} \frac{\lvert k_1+k_2 \rvert^2 \lvert k_3 \rvert \lvert \widehat{\varphi}(k_1+k_2,k_3,k_{4:n+1})\rvert^2}{\lvert k_2 \rvert} \mathbb J^N_{k_1,k_2} \zeta^1_{n+1}(k_{1:n+1})\\
		& = c_{\mathrm{off}_1}(n) \sum_{\ell,m,k_{2:n}} \frac{\lvert \ell+m \rvert^2 \lvert k_2 \rvert \lvert \widehat{\varphi}(\ell+m,k_{2:n})\rvert^2}{\lvert m \rvert} \mathbb J^N_{l,m} \zeta^1_{n+1}(\ell,m,k_{2:n})\\
		& = c_{\mathrm{off}_1}(n)\sum_{k_{1:n}}\lvert \widehat{\varphi}(k_1,k_{2:n})\rvert^2 \lvert k_1 \rvert^2 \lvert k_2 \rvert \sum_{\ell+m = k_1}\mathbb J^N_{l,m}\frac{\zeta^1_{n+1}(\ell,m,k_{2:n})}{\lvert m \rvert} \\
		& \le \frac{\lvert \mathfrak w \rvert^2 n! \, n (n-1)}{\pi^2}\sum_{k_{1:n}}\lvert \widehat{\varphi}(k_1,k_{2:n})\rvert^2 \lvert k_1 \rvert^2 \zeta^2_{n}(k_{1:n}) = \frac{2\lvert \mathfrak w \rvert^2 (n-1)}{\pi^2}\langle (-\mathcal L_0) \mathcal Z_2 \varphi, \varphi\rangle \, ,
	\end{equs}
	where the factor $n$ was absorbed in the definition of $(-\mathcal L_0)$ (recall~(\ref{eq:gen_Fourier})).

	We now prove the statement about the off-diagonal terms of type 2, i.e. \eqref{eq:control_offdiag_2}. We set
	\begin{equ}
		c_{\mathrm{off}_2}(n) \eqdef \frac{\lvert \mathfrak w \rvert^2 n! \, n (n-1) (n-2)}{4\pi^2} \,
	\end{equ}
	and follow the same steps already used for the off-diagonal terms of type 1, even though the expressions to which we apply them now are slightly different. More precisely, we consider the left-hand side of~(\ref{eq:control_offdiag_2}), use the Cauchy-Schwarz inequality, $ab \le a^2/2 + b^2/2$ with $a = \Phi(k_1+k_2,k_{3:4},k_{5:n+1})\mathbb J^N_{k_1,k_2}$ and $b$ analogous and the change of variables $k_{1:n+1} \mapsto (\ell,m,k_{2:n})$. Overall, this gives the upper bound
	\begin{equ}
		\left\lvert \left\langle \mathcal Z \mathcal A^N_+ \varphi,\mathcal A^N_+ \varphi \right\rangle_{\mathrm{off}_2} \right\rvert
        \le
        c_{\mathrm{off}_2}(n)\sum_{k_{1:n}} \lvert \widehat \varphi (k_1,k_{2:n})\rvert^2\lvert k_1\rvert^2 \lvert k_2 \rvert \lvert k_3 \rvert \sum_{\ell+m=k_1}\mathbb J^N_{\ell,m} \frac{\zeta^1_{n+1}(\ell, m,k_{2:n})}{\lvert \ell \rvert \lvert m \rvert} \, .
	\end{equ}
	Finally, by applying the hypothesis of the Lemma to innermost sum of the expression above, we obtain the upper bound
	\begin{equ}\label{eq:offterm}
		\frac{\lvert \mathfrak w \rvert^2 n! \, n (n-1) (n-2)}{4\pi^2}\sum_{k_{1:n}} \lvert \widehat \varphi (k_1,k_{2:n})\rvert^2 \frac{\lvert k_{1}\rvert \lvert k_{2}\rvert \lvert k_{3}\rvert}{\sqrt{\lvert k_{1:n} \rvert^2}} \zeta^2_{n}(k_{1:n}) \, .
	\end{equ}
    Using $\lvert k_3\rvert\leq\sqrt{\lvert k_{1:n}\rvert}$ the sum above is upper bounded by
    \begin{equ}
        \sum_{k_{1:n}}\lvert k_{1}\rvert \lvert k_{2}\rvert \lvert \widehat \varphi (k_{1:n})\rvert^2  \zeta^2_{n}(k_{1:n})\,.
    \end{equ}
    Since both $\widehat\varphi$ and $\zeta_n^2$ are symmetric, we can replace $\lvert k_{1}\rvert \lvert k_{2}\rvert$ in this sum by $\frac{1}{\binom{n}{2}} \sum_{i\neq j} \lvert k_i \rvert  \lvert k_j \rvert$, which we further estimate by
    \begin{equ}
        \frac{1}{\binom{n}{2}} \sum_{i\neq j} \lvert k_i \rvert  \lvert k_j \rvert
        \le \frac{1}{2\binom{n}{2}} \sum_{i\neq j} \lvert k_i \rvert^2+  \lvert k_j\rvert^2
        =\frac{2}{n}\sum_{i=1}^n\lvert k_i\rvert^2\,.
    \end{equ}
    Doing so we obtain that~\eqref{eq:offterm} is bounded by the right-hand side of~\eqref{eq:control_offdiag_2}.
    This concludes the proof.
\end{proof}

We are now ready to prove Theorem~\ref{th:iterative_bounds}. The proof is written separately for the upper and lower bounds.

\begin{proof}[Proof of Theorem~\ref{th:iterative_bounds}, inequality~(\ref{eq:iter_LB})]
	We proceed by induction. In this first part of the proof we show that the $(2k+2)$-th lower bound holds assuming that the $(2k+1)$-th upper bound does. In the next part, instead, we will assume the $(2k+2)$-th lower bound and prove that the $(2k+3)$-th upper bound holds.

	As for all proofs by induction, we need an initial step. We take this to be the lower bound for $k=0$. More precisely, this consists in showing
	\begin{equ}
		0 = \mathcal H_2 \ge c_2 \left((-\mathcal L_0^{\mathfrak w})\frac{1}{f_2(\mathcal N)}\left(1-f_2(\mathcal N)\right)-\frac{1}{(\mathcal N +2)^{1+\delta}}(-\mathcal L_0)\right) \, .
	\end{equ}
	Since $f_2(n)>1$ for every $n\in \mathbb N$, the right-hand side above is negative and thus the inequality holds for any arbitrary choice of $c_2>0$ (uniformly in $n,k\ge1$), so we may as well choose the one given in the statement of the theorem:
	\begin{equ}
		c_2 \eqdef \frac{1}{\pi(\lvert \mathfrak w \rvert^2 \vee 1)} \, .
	\end{equ}

	We now proceed to the inductive argument. Let $k\ge 1$.
	Assume by induction that~(\ref{eq:iter_UB}) holds for $k-1$, i.e. assume the upper bound stated for $\mathcal H_{2k+1}$. We want to prove~(\ref{eq:iter_LB}) for $k$, i.e. we want to prove the lower bound stated for $\mathcal H_{2k+2}$. We have:
	\begin{equs}
		\mathcal H_{2k+2} &= \left(\mathcal A_+^N\right)^*(\lambda - \mathcal L_0 +\mathcal H_{2k+1})^{-1}\mathcal A_+^N \\
		& \ge \left(\mathcal A_+^N\right)^*\left(\lambda - \mathcal L_0 +c_{2k+1} \left[(-\mathcal L_0^{\mathfrak w})  \mathcal S_{2k+1}+ f_{2k+1}(\mathcal N)(-\mathcal L_0)\right]\right)^{-1}\mathcal A_+^N \\
		& \eqdef \left(\mathcal A_+^N\right)^*\mathcal Z_{2k+1}\mathcal A_+^N \, ,
	\end{equs}
	where we used the last equation to define the operator $\mathcal Z_{2k+1}$. We observe that $\mathcal Z_{2k+1}$ is diagonal in Fourier and thus, consistently with the notation already used in the other lemmas of this section, we denote by $\zeta^{2k+1}$ its Fourier multiplier.

	By Remark~\ref{de:def_ineq_operators}, we set out to bound $\langle \left(\mathcal A_+^N\right)^*\mathcal Z_{2k+1}\mathcal A_+^N \varphi, \varphi \rangle$ from below, for $\varphi \in \Gamma L^2_n$.
	We recognize an expression of the type considered in Lemma~\ref{le:dec_diag_off}, \ref{le:control_diag}, and ~\ref{le:control_offdiag}. First of all, we use Lemma~\ref{le:dec_diag_off} to split the scalar product into diagonal and off-diagonal terms:
\begin{equ}\label{eq:split}
    \langle \left(\mathcal A_+^N\right)^*\mathcal Z_{2k+1}\mathcal A_+^N \varphi, \varphi \rangle=\langle \left(\mathcal A_+^N\right)^*\mathcal Z_{2k+1}\mathcal A_+^N \varphi, \varphi \rangle_{\mathrm{Diag}}+\langle \left(\mathcal A_+^N\right)^*\mathcal Z_{2k+1}\mathcal A_+^N \varphi, \varphi \rangle_{\mathrm{off}}
\end{equ}
Then we proceed to study them separately, starting with the diagonal ones.

	Recall the definitions of $\tilde\Gamma$ and $\tilde\Gamma^{\mathfrak w}$ given in~(\ref{eq:tildeGamma}). In order to apply Lemma~\ref{le:control_diag}, we need a bound on the sum 
	\begin{equ}\label{eq:horror1}
		\sum_{\ell+m=k_1}\frac{\mathbb J^N_{\ell,m} }{\lambda + \tilde\Gamma +c_{2k+1}f_{2k+2}\left[\tilde\Gamma^{\mathfrak w}\mathrm L^N_{2k+1}(\lambda + \tilde \Gamma,z_{2k+2})+\tilde\Gamma\right]} \, ,
	\end{equ}
	where we first used property~(\ref{eq:ex_iter_chaos}) to replace $z_{2k+1}(n+1)$ and $f_{2k+1}(n+1)$ with $z_{2k+2}(n)$ and $f_{2k+2}(n)$ respectively and then suppressed the argument of $z_{2k+1}$ and $f_{2k+1}$, as it is constant throughout. Inside the sum above we recognize the Fourier multiplier $\zeta^{2k+1}$ multiplied by the product of indicator functions $\mathbb J^N_{\ell,m}$, as by hypothesis of Lemma~\ref{le:control_diag}.

	The estimate of those kind of sums is carried out in Appendix~\ref{ap:replacement_lemmas}. However, before invoking it, we do one additional step and lower bound it by:
	\begin{equ}\label{eq:horror2}
		\frac{1}{c_{2k+1}f_{2k+2}\left(1+\frac{1}{f_{2k+2}}\right)}\sum_{\ell+m=k_1}\frac{\mathbb J^N_{\ell,m} }{\lambda + \tilde\Gamma +\tilde\Gamma^{\mathfrak w}\mathrm L^N_{2k+1}(\lambda + \tilde \Gamma,z_{2k+2})} \, ,
	\end{equ}
	where we multiplied $\lambda$ by $1+c_{2k+1}f_{2k+2}$ (a number larger than $1$), replaced the $c_{2k+1}f_{2k+2}$ factor of $\tilde \Gamma \mathrm L^N_{2k+1}$ by $1+c_{2k+1}f_{2k+2}$ and finally factored $1+c_{2k+1}f_{2k+2}$ out and used
	\begin{equ}
		1+c_{2k+1}f_{2k+2}
        =
        c_{2k+1}f_{2k+2}\left(1+\frac{1}{c_{2k+1}f_{2k+2}}\right)
        \le
        c_{2k+1}f_{2k+2}\left(1+\frac{1}{f_{2k+2}}\right) \, .
	\end{equ}
	As announced, we now apply Lemma~\ref{le:int_est} to the sum in~(\ref{eq:horror2}) to lower bound the whole expression~(\ref{eq:horror2}) by
	\begin{equ} \label{eq:intermediate}
		\frac{B_{2k+2}}{f_{2k+2}}\frac{\pi}{\lvert\mathfrak w \rvert} \left[A_{2k+2} \mathrm L^N_{2k+2}\left(\lambda+\lvert k_{1:n}\rvert^2,z_{2k+2}\right) - \frac{4}{3}(z_{2k+2})^{\theta_{2k+2}}\right] \, ,
	\end{equ}
	where, in order to increase readability, we set
	\begin{equs}
		A_{2k+2}(n)
        &\eqdef
        1-\left(\lvert \mathfrak w\rvert C_{\mathrm{Diag}}+2+\frac{3}{\lvert \mathfrak w \rvert}\right)\frac{1}{z^{\frac{\theta_{2k+1}}{2}}} \, ,\\
		B_{2k+2}(n)
        &\eqdef
        \frac{3}{2c_{2k+1}\left(1+\frac{1}{f_{2k+2}(n)}\right)} \, .\label{eq:B_{2k+2}}
	\end{equs}
    We now proceed with two additional steps.
	In the first one, observing that $\theta_{2k+2} \le \frac{2}{3}$ for every $k\ge 0$ and recalling the definition of $f_{2k+2}$ given in~(\ref{eq:z_and_f}), we estimate the additive error in the square brackets of~(\ref{eq:intermediate}) by 
	\begin{equ} \label{eq:how_to_lose_time}
        \frac{4}{3}(z_{2k+2})^{\frac{2}{3}}
        \le
        \frac{1}{2} f_{2k+2}
        \le
        \left(1- \frac{1}{2 k^{1+\delta}}\right) f_{2k+2}\, .
	\end{equ}
	In the second step, instead, we observe that $\frac{\theta_{2k+1}}{2}\ge\frac{1}{3}$ for every $k\ge 1$, so that
	\begin{equ}
        A_{2k+2}
        \ge
        1-\left(\lvert \mathfrak w\rvert C_{\mathrm{Diag}}+2+\frac{3}{\lvert \mathfrak w \rvert}\right)\frac{1}{(z_{2k+2})^{\frac{1}{3}}}
        \ge
        1- \frac{1}{2k^{1+\delta}} \, ,
	\end{equ}
	where the second inequality is true because of the lower bound $(z_{2k+2})^{\frac13}\ge2K^{\frac13}k^{1+\delta}$, assuming that $K$ in definition~(\ref{eq:z_and_f}) is large enough.
    
    Summing up, (\ref{eq:intermediate}) is lower bounded by 
    \begin{equ} \label{eq:final}
		\frac{B_{2k+2}}{f_{2k+2}}\frac{\pi}{\lvert\mathfrak w \rvert} \left(1- \frac{1}{2k^{1+\delta}} \right) \left(\mathrm L^N_{2k+2}\left(\lambda+\lvert k_{1:n}\rvert^2,z_{2k+2}\right) - f_{2k+2}\right)\, 
	\end{equ}
    and by using the above as hypothesis in Lemma~\ref{le:control_diag}, we obtain the following bound on the diagonal term of the scalar product:
	\begin{equ}
		\left\langle \left(\mathcal A_+^N\right)^*\mathcal Z_{2k+1}\mathcal A_+^N \varphi, \varphi \right\rangle_{\mathrm{Diag}}
        \ge
        \left\langle \frac{B_{2k+2}(\mathcal N)}{\pi \lvert \mathfrak w \rvert}  \left(1- \frac{1}{2 k^{1+\delta}}\right)(-\mathcal L_0^{\mathfrak w})\mathcal S_{2k+2}^N \varphi, \varphi \right\rangle
		\quad \forall \, \varphi \in \Gamma L^2_n\, .
	\end{equ}

	We now proceed to estimate the off-diagonal terms. We want to apply Lemma~\ref{le:control_offdiag}, whose hypotheses require estimates on two sums involving the Fourier multiplier $\zeta^{2k+1}$, one for the off-diagonal terms of type $1$ and one for the off-diagonal terms of type $2$.

	The sum that needs to be estimated for the off-diagonal terms of type $1$ is
	\begin{equ}\label{eq:offdiag1}
		\lvert k_2\rvert\sum_{\ell+m = k_1} \frac{1}{{\lvert m \rvert}} \frac{\mathbb J^N_{\ell,m} }{\lambda + \tilde\Gamma +c_{2k+1}f_{2k+2}\left[\tilde\Gamma^{\mathfrak w}\mathrm L^N_{2k+1}(\lambda + \tilde \Gamma,z_{2k+2})+\tilde\Gamma\right]} \, ,
	\end{equ}
	where, again, we first used property~(\ref{eq:ex_iter_chaos}) and then suppressed the variable $n$. 
	By dropping from the denominator of~(\ref{eq:offdiag1}) $\lambda + \tilde \Gamma$ and all other terms involving the Fourier modes $k_{\{1:n\}\setminus\{2\}}$, and by lower bounding $\mathrm L^N_{2k+1}$ by $0$, we obtain the upper bound
	\begin{equ}
		\frac{\lvert k_2\rvert}{c_{2k+1}f_{2k+2}}\sum_{\ell+m = k_1} \frac{1}{{\lvert m \rvert}} \frac{\mathbb J^N_{\ell,m} }{ \left(\lvert m \rvert^2+\lvert k_2 \rvert^2\right)} \lesssim \frac{1}{c_{2k+1}f_{2k+2}} \, ,
	\end{equ}
	where we estimated the sum by the corresponding integral and applied Lemma~\ref{le:arctan_parameter} with $\beta = \lvert k_2 \rvert^2$ and $\gamma=1$.

	Regarding the off-diagonal terms of type $2$, instead, we need to estimate the sum
	\begin{equs}\label{eq:offdiag2}
		&\sum_{\ell+m = k_1} \frac{1}{{\lvert \ell \rvert \lvert m \rvert}} \frac{\mathbb J^N_{\ell,m} }{\lambda + \tilde\Gamma +c_{2k+1}f_{2k+2}\left[\tilde\Gamma^{\mathfrak w}\mathrm L^N_{2k+1}(\lambda + \tilde \Gamma,z_{2k+2})+\tilde\Gamma\right]} \, .
	\end{equs}
    We now observe that the condition $\ell+m = k_1$ implies that at least one between $\ell$ and $m$ has norm larger than the one of $\tfrac12 k_1$. Using this to replace one $\lvert \ell\rvert^2$ by $\lvert \tfrac12 k_1 \rvert^2$ and by applying arguments analogous to the ones just used in the estimate of~(\ref{eq:offdiag1}), we obtain that~(\ref{eq:offdiag2}) is upper bounded by
	\begin{equ}
		\frac{16}{c_{2k+1}f_{2k+2}}\sum_{\ell+m = k_1} \frac{1}{{\lvert k_1 \rvert \lvert m \rvert}} \frac{\mathbb J^N_{\ell,m} }{\left(\lvert m \rvert^2 +\lvert k_{1:n}\rvert^2\right)} \lesssim  \frac{1}{c_{2k+1}f_{2k+2}} \frac{1}{\lvert k_1 \rvert \sqrt{\lvert k_{1:n}\rvert^2}} \, ,
	\end{equ}
	where we estimated the sum by the corresponding integral and applied Lemma~\ref{le:arctan_parameter} with $\beta = \lvert k_{1:n} \rvert^2$ and $\gamma=1$.

	Using Lemma~\ref{le:control_offdiag} with the estimates above as hypotheses, we conclude that
	\begin{equs}
		&\left\lvert\left\langle \left(\mathcal A_+^N\right)^*\mathcal Z_{2k+1}\mathcal A_+^N \varphi, \varphi \right\rangle_{\mathrm{Off}_1}\right\rvert + \left\lvert\left\langle \left(\mathcal A_+^N\right)^*\mathcal Z_{2k+1}\mathcal A_+^N \varphi, \varphi \right\rangle_{\mathrm{Off}_2}\right\rvert \\
		&\le C_{\mathrm{off}} \left\langle\left(\frac{2\lvert \mathfrak w \rvert^2 (\mathcal N-1)}{c_{2k+1}f_{2k+2}(\mathcal N)}+ \frac{\lvert \mathfrak w \rvert^2  (\mathcal N-1) (\mathcal N-2)}{c_{2k+1}f_{2k+2}(\mathcal N)}\right)(-\mathcal L_0)\varphi,\varphi\right\rangle\\
		&\le C_{\mathrm{off}} \left\langle\frac{\lvert \mathfrak w \rvert^2 \mathcal N^{\,2}}{c_{2k+1}f_{2k+2}(\mathcal N)}(-\mathcal L_0)\varphi,\varphi\right\rangle \, ,
	\end{equs}
	where $C_{\mathrm{off}}$ is an absolute constant independent of all variables at play (see~(\ref{eq:variables}) for more details).
	With this we have come to the last step of the proof of the iterative lower bounds. Recall that we were trying to estimate \eqref{eq:split} from below. In order to do so, we split it into diagonal and off-diagonal terms and we bounded each of them separately. We now put those estimates together:
	\begin{equs}
		& \left\langle \left(\mathcal A_+^N\right)^*\mathcal Z_{2k+1}\mathcal A_+^N \varphi, \varphi \right\rangle  \\
		& \ge \left\langle \left(\mathcal A_+^N\right)^*\mathcal Z_{2k+1}\mathcal A_+^N \varphi, \varphi \right\rangle_{\mathrm{Diag}} - \left\lvert \left\langle \left(\mathcal A_+^N\right)^*\mathcal Z_{2k+1}\mathcal A_+^N \varphi, \varphi \right\rangle_{\mathrm{off}_1}\right\rvert -\left\lvert\left\langle \left(\mathcal A_+^N\right)^*\mathcal Z_{2k+1}\mathcal A_+^N \varphi, \varphi \right\rangle_{\mathrm{off}_2}\right\rvert\\
		& \ge \left\langle\left[\frac{B_{2k+2}(\mathcal N)}{\pi \lvert \mathfrak w \rvert} \left(1- \frac{1}{2k^{1+\delta}}\right)(-\mathcal L_0^{\mathfrak w})\mathcal S_{2k+2}^N -\frac{C_{\mathrm{off}}\lvert \mathfrak w \rvert^2 \mathcal N^{\,2}}{c_{2k+1}f_{2k+2}(\mathcal N)}(-\mathcal L_0)\right] \varphi, \varphi \right\rangle   \\
		& \ge \left\langle\frac{B_{2k+2}(\mathcal N)\left(1- \frac{1}{2k^{1+\delta}}\right)}{\pi \lvert \mathfrak w \rvert} \left((-\mathcal L_0^{\mathfrak w})\mathcal S_{2k+2}^N - \frac{8 \pi C_{\mathrm{off}}\lvert \mathfrak w \rvert^3 \mathcal N^{\,2}}{3f_{2k+2}(\mathcal N)}(-\mathcal L_0)\right) \varphi, \varphi \right\rangle \, ,
	\end{equs}
	where we first used the triangular inequality and then the bounds established in proof. Finally, the last inequality is obtained by factoring out the coefficient of $(-\mathcal L_0^{\mathfrak w})\mathcal S_{2k+2}^N$ and upper bounding both $\left(1+\frac{1}{f_{2k+2}}\right)$ and $\left(1- \frac{1}{2k^{1+\delta}}\right)^{-1}$ by $2$.

	At this point, the proof is almost complete. We just need a few more estimates on the coefficients of the operators appearing in the scalar product above, so that it becomes exactly the one lower-bounding $\mathcal H_{2k+2}$ in the inequality~(\ref{eq:iter_LB}) of Theorem~\ref{th:iterative_bounds}.
	First of all, by recalling the definition of $f_{2k+2}$ given in (\ref{eq:z_and_f}) and taking $K$ large enough, we estimate the coefficient of $(-\mathcal L_0)$ by above by
	\begin{equs}
		\frac{8 \pi C_{\mathrm{off}}\lvert \mathfrak w \rvert^3 n^2}{3f_{2k+2}}
        =
        \frac{8 \pi C_{\mathrm{off}}\lvert \mathfrak w \rvert^3 n^2}{9 K^{2/3}(n+2k+2)^{3+\delta}}
        \le
        \frac{1}{(n+k)^{1+\delta}} \, .
	\end{equs}
	Finally, we consider the coefficient in front of the square bracket and observe that
	\begin{equ}
		\frac{B_{2k+2}\left(1- \frac{1}{2k^{1+\delta}}\right)}{\pi \lvert \mathfrak w \rvert}
		=
		\frac{3\left(1- \frac{1}{2k^{1+\delta}}\right)}{2\pi \lvert \mathfrak w \rvert c_{2k+1}\left(1+\frac{1}{f_{2k+2}}\right)}
		\ge
		c_{2k+2} \, ,
	\end{equ}
	simply by expanding the definition of $B_{2k+2}$ given at~(\ref{eq:B_{2k+2}}), lower bounding $f_{2k+2}$ by $2k^{1+\delta}$ and recalling the definition of $c_{2k+2}$ given in the statement of Theorem~\ref{th:iterative_bounds}. This concludes the proof.
\end{proof}

\begin{proof}[Proof of Theorem~\ref{th:iterative_bounds}, inequality~(\ref{eq:iter_UB})]
	In this second part of the proof we show that the $(2k+3)$-th upper bound holds assuming that the $(2k+2)$-th lower bound does. In the hope of making the reading easier, we note that the general structure of the two parts is similar.

	Let $k\ge 0$. Assume by induction that~(\ref{eq:iter_LB}) holds for $k$, i.e. assume the lower bound stated for $\mathcal H_{2k+2}$. We want to prove that also~(\ref{eq:iter_UB}) holds for $k$, i.e. we want to prove the upper bound stated for $\mathcal H_{2k+3}$. Then:
	\begin{equs}
		\mathcal H_{2k+3} &= \left(\mathcal A_+^N\right)^*\left(\lambda - \mathcal L_0 +\mathcal H_{2k+2}\right)^{-1}\mathcal A_+^N \\
		& \le \left(\mathcal A_+^N\right)^*\left(\lambda - \mathcal L_0 +c_{2k+2} \left[(-\mathcal L_0^{\mathfrak w}) \mathcal S_{2k+2} - \frac{1}{(\mathcal N +k)^{1+\delta}}(-\mathcal L_0)\right]\right)^{-1}\mathcal A_+^N \\
		& \eqdef \left(\mathcal A_+^N\right)^*\mathcal Z_{2k+2}\mathcal A_+^N \, ,
	\end{equs}
	where we the last equation defines $\mathcal Z_{2k+2}$. We denote by $\zeta^{2k+2}$ its Fourier multiplier.

	By Remark~\ref{de:def_ineq_operators}, our aim is to bound $\langle \left(\mathcal A_+^N\right)^* \mathcal Z_{2k+2} \mathcal A^N_+ \varphi, \varphi \rangle$ from above, for $\varphi \in \Gamma L^2_n$.
	We use~\ref{le:dec_diag_off} to split the scalar product into diagonal  and off-diagonal terms.
	We start by studying the diagonal ones. Recall the definitions of $\tilde\Gamma$ and $\tilde\Gamma^{\mathfrak w}$ given in~(\ref{eq:tildeGamma}). In order to apply Lemma~\ref{le:control_diag} we need a bound on
	\begin{equ}\label{eq:horror3}
		\sum_{\ell+m=k_1}\frac{\mathbb J^N_{\ell,m} }{\lambda + \tilde\Gamma +c_{2k+2}\left[\frac{\tilde\Gamma^{\mathfrak w}}{f_{2k+3}}\left(\mathrm L^N_{2k+2}(\lambda + \tilde \Gamma,z_{2k+3})-f_{2k+3}\right)-\frac{\tilde\Gamma}{(n+1+k)^{1+\delta}}\right]} \, ,
	\end{equ}
    where we first used property~(\ref{eq:ex_iter_chaos}) to replace $z_{2k+1}(n+1)$ and $f_{2k+1}(n+1)$ with $z_{2k+2}(n)$ and $f_{2k+2}(n)$ respectively and then suppressed the argument of $z_{2k+1}$ and $f_{2k+1}$, as it is constant throughout.
	The plan is to estimate this sum by using Lemma~\ref{le:int_est}, but before being able to do so we need to manipulate it a bit. While this was also the case for the proof of the iterative lower bounds, this time the process is a bit more involved, because not all addends in the denominator are positive. We start by expanding the denominator and applying inequality~(\ref{eq:tGamma_comp}):
	\begin{equs}
		&\sum_{\ell+m=k_1}\frac{\mathbb J^N_{\ell,m} }{\lambda + \tilde\Gamma +\frac{c_{2k+2}}{f_{2k+3}}\tilde\Gamma^{\mathfrak w}\,\mathrm L^N_{2k+2}(\lambda + \tilde \Gamma,z_{2k+3})-c_{2k+2}\tilde\Gamma^{\mathfrak w}-\frac{c_{2k+2}}{(n+1+k)^{1+\delta}}\tilde\Gamma} \\
		&\le\sum_{\ell+m=k_1}\frac{\mathbb J^N_{\ell,m} }{\lambda + \left(1-\lvert \mathfrak w \lvert^2c_{2k+2}-\frac{c_{2k+2}}{(n+1+k)^{1+\delta}}\right)\tilde\Gamma +\frac{c_{2k+2}}{f_{2k+3}}\tilde\Gamma^{\mathfrak w}\,\mathrm L^N_{2k+2}(\lambda + \tilde \Gamma,z_{2k+3})} \, . \qquad \label{eq:horror4}
	\end{equs}
	Now the goal is to factor out the coefficients of $\tilde \Gamma$ and of $\tilde\Gamma^{\mathfrak w}\,\mathrm L^N_{2k+2}$, in the same fashion in which expression~(\ref{eq:horror2}) was obtained.
	In order to be able to do this, we need some control on those coefficients. We start by estimating $c_{2k+2}$ as follows:
	\begin{equ}\label{eq:est_c2}
		\lvert \mathfrak w \rvert^2 c_{2k+2}
        =
        \lvert \mathfrak w \rvert^2 \frac{\prod_{j=1}^{k} \left(1-\frac{1}{2j^{1+\delta}}\right)}{\prod_{j=1}^{k}\left(1+\frac{1}{2j^{1+\delta}} \right)^2} \, c_2
        \le
        \lvert \mathfrak w \rvert^2\frac{1}{\pi (\lvert \mathfrak w \rvert^2 \vee 1)}
        \le
        \frac{1}{\pi} \, .
	\end{equ}
	Thus expression~(\ref{eq:horror4}) is upper bounded by the following:
	\begin{equ}\label{eq:horror5}
		\sum_{\ell+m=k_1}\frac{\mathbb J^N_{\ell,m} }{\lambda + \left(1-\frac{1}{\pi}-\frac{1}{\pi(n+1+k)^{1+\delta}}\right)\tilde\Gamma +\frac{c_{2k+2}}{f_{2k+3}}\tilde\Gamma^{\mathfrak w}\,\mathrm L^N_{2k+2}(\lambda + \tilde \Gamma,z_{2k+3})} \, .
	\end{equ}
	In particular, we observe that the coefficient of $\tilde \Gamma$ is positive. Moreover, by~(\ref{eq:est_c2}) above and for a large enough $K$, we have that $\frac{c_{2k+2}}{f_{2k+3}} \le 1-\frac{2}{\pi}$.
	This means that replacing the coefficient of $\tilde \Gamma$ by the one of  $\tilde\Gamma^{\mathfrak w}\,\mathrm L^N_{2k+2}$ gives an upper bound. Further multiplying $\lambda$ by $c_{2k+2}/f_{2k+3}$ and factoring out finally provides us with an upper bound of the kind we were looking for:
	\begin{equ}
		\frac{f_{2k+3}}{c_{2k+2}} \sum_{\ell+m=k_1}\frac{\mathbb J^N_{\ell,m} }{\lambda + \tilde\Gamma +\tilde\Gamma^{\mathfrak w}\,\mathrm L^N_{2k+2}(\lambda + \tilde \Gamma,z_{2k+3})} \, .
	\end{equ}

	We are finally ready to apply Lemma~\ref{le:int_est} to the sum in the expression above. This gives us the upper bound
	\begin{equ}
		\frac{3\pi}{2\lvert\mathfrak w \rvert}\left(1+\frac{\lvert \mathfrak w \rvert C_{\mathrm{Diag}}}{(z_{2k+3})^{\theta_{2k+3}}}\right) \mathrm L^N_{2k+3}\left(\lambda+\lvert k_{1:n} \rvert^2,z_{2k+3}\right) \, ,
	\end{equ}
	which, by choosing $K$ large enough, can be further upper bounded by
	\begin{equ} \label{eq:less_horror}
		\frac{3\pi}{2\lvert\mathfrak w \rvert}\left(1+\frac{1}{2(k+1)^{1+\delta}}\right) \mathrm L^N_{2k+3}\left(\lambda+\lvert k_{1:n} \rvert^2,z_{2k+3}\right) \, .
	\end{equ}

	Using the bound provided by expression~(\ref{eq:less_horror}) in the hypothesis of Lemma~\ref{le:control_diag} (and recalling the coefficient that was in front of the sum before invoking Appendix~\ref{ap:replacement_lemmas}), we obtain the following upper bound on the diagonal part:
	\begin{equs}
		&\left\langle \left(\mathcal A_+^N\right)^* \mathcal Z_{2k+2} \mathcal A^N_+ \varphi, \varphi \right\rangle_{\mathrm{Diag}}  \\
		&\le \left\langle \frac{f_{2k+3}(\mathcal N)}{c_{2k+2}} \frac{3}{2\pi \lvert\mathfrak w \rvert}\left(1+\frac{1}{2(k+1)^{1+\delta}}\right) (-\mathcal L_0^{\mathfrak w})\mathrm L^N_{2k+3}\left(\lambda+(-\mathcal L_0),z_{2k+3}(\mathcal N)\right)\varphi, \varphi \right\rangle \\
		& = \left\langle \frac{3\left(1+\frac{1}{2(k+1)^{1+\delta}}\right)}{2\pi \lvert\mathfrak w \rvert c_{2k+2}} (-\mathcal L_0^{\mathfrak w})\mathcal S_{2k+3}^N\varphi, \varphi \right\rangle \, .
	\end{equs}

	Let us now estimate the off-diagonal terms. We want to apply Lemma~\ref{le:control_offdiag}, whose hypotheses require estimates on two sums involving the Fourier multiplier $\zeta^{2k+2}$, one for the off-diagonal terms of type 1 and one for the off-diagonal terms of type 2.

	The sum that need to be estimated for the off-diagonal terms of type 1 is
	\begin{equ} \label{eq:offdiag3}
		\lvert k_2\rvert\sum_{\ell+m=k_1}\frac{1}{{\lvert m \rvert}}\frac{\mathbb J^N_{\ell,m} }{\lambda + \tilde\Gamma +c_{2k+2}\left[\frac{\tilde\Gamma^{\mathfrak w}}{f_{2k+3}}\left(\mathrm L^N_{2k+2}(\lambda + \tilde \Gamma,z_{2k+3})-f_{2k+3}\right)-\frac{\tilde\Gamma}{(n+1+k)^{1+\delta}}\right]} \, ,
	\end{equ}
	where, again, we first used property~(\ref{eq:ex_iter_chaos}) and then suppressed the variable $n$.  As was the case for the diagonal terms, we will first manipulate this expression a bit and then apply to it a lemma proved in the appendix.
	By dropping from the denominator of~(\ref{eq:offdiag3}) the Laplace variable $\lambda$ and all terms involving the Fourier modes $k_{\{1:n\}\setminus\{2\}}$, by lower bounding $\mathrm L^N_{2k+2}$ by $0$ and by applying Cauchy-Schwarz~(\ref{eq:tGamma_comp}), we obtain the upper bound
	\begin{equ}
		\frac{4 \lvert k_2\rvert}{\left(1-\lvert \mathfrak w\rvert^2 c_{2k+2}-\frac{c_{2k+2}}{(n+1+k)^{\delta}}\right)}\sum_{\ell+m=k_1}\frac{1}{{\lvert m \rvert}}\frac{\mathbb J^N_{\ell,m} }{\left(\lvert m \rvert^2 + \lvert k_2 \rvert^2\right)} \lesssim \frac{1}{\left(1-\lvert \mathfrak w\rvert^2 c_{2k+2}-\frac{c_{2k+2}}{(n+1+k)^{\delta}}\right)} \, ,
	\end{equ}
	where we estimated the sum by the corresponding integral and applied Lemma~\ref{le:arctan_parameter} with $\beta = \lvert k_2 \rvert^2$ and $\gamma=1$.

	Regarding the off-diagonal terms of type $2$, instead, we need to estimate the sum
	\begin{equ}\label{eq:offdiag4}
		\sum_{\ell+m=k_1}\frac{1}{{\lvert \ell \rvert \lvert m \rvert}}\frac{\mathbb J^N_{\ell,m} }{\lambda + \tilde\Gamma +c_{2k+2}\left[\frac{\tilde\Gamma^{\mathfrak w}}{f_{2k+3}}\left(\mathrm L^N_{2k+2}(\lambda + \tilde \Gamma,z_{2k+3})-f_{2k+3}\right)-\frac{\tilde\Gamma}{(n+1+k)^{1+\delta}}\right]}\, .
	\end{equ}
	We now observe that the condition $\ell+m = k_1$ implies that at least one between $\ell$ and $m$ has norm larger than the one of $\tfrac12 k_1$. Using this to replace $\lvert \ell\rvert^2$ by $\lvert \tfrac12 k_1 \rvert^2$ and by applying arguments analogous to the ones just used in the estimate of~(\ref{eq:offdiag3}), we obtain that~(\ref{eq:offdiag4}) is upper bounded by
	\begin{equs}
		&\frac{16}{\left(1-\lvert \mathfrak w\rvert^2 c_{2k+2}-\frac{c_{2k+2}}{(n+1+k)^{\delta}}\right)}\sum_{\ell+m=k_1}\frac{1}{{\lvert k_1 \rvert\lvert m \rvert}}\frac{\mathbb J^N_{\ell,m} }{\left(\lvert m \rvert^2 + \lvert k_{1:n} \rvert^2\right)} \\
		& \lesssim \frac{1}{\lvert k_1 \rvert \sqrt{\lvert k_{1:n}\rvert^2}}\frac{\pi^2}{\left(1-\lvert \mathfrak w\rvert^2 c_{2k+2}-\frac{c_{2k+2}}{(n+1+k)^{\delta}}\right)} \, ,
	\end{equs}
	where in the last inequality we estimated the sum by the corresponding integral and applied Lemma~\ref{le:arctan_parameter} with $\beta = \lvert k_{1:n} \rvert^2$ and $\gamma=1$.

	Using Lemma~\ref{le:control_offdiag} with the estimates above as hypotheses, we conclude that:
	\begin{equs}
		&\left\lvert\left\langle \left(\mathcal A_+^N\right)^*\mathcal Z_{2k+2}\mathcal A_+^N \varphi, \varphi \right\rangle_{\mathrm{Off}_1}\right\rvert + \left\lvert\left\langle \left(\mathcal A_+^N\right)^*\mathcal Z_{2k+2}\mathcal A_+^N \varphi, \varphi \right\rangle_{\mathrm{Off}_2}\right\rvert \\
		& \le C_{\mathrm{off}}\left\langle \left(\frac{2\lvert \mathfrak w \rvert^2 (\mathcal N-1)}{1-\lvert \mathfrak w\rvert^2 c_{2k+2}-\frac{c_{2k+2}}{(\mathcal N+1+k)^{\delta}}} + \frac{\lvert \mathfrak w \rvert^2  (\mathcal N-1) (\mathcal N-2)}{1-\lvert \mathfrak w\rvert^2 c_{2k+2}-\frac{c_{2k+2}}{(\mathcal N+1+k)^{\delta}}}\right)(-\mathcal L_0) \varphi, \varphi \right\rangle \\
		& \le C_{\mathrm{off}}\left\langle \frac{\lvert \mathfrak w \rvert^2 \mathcal N^{\,2}}{1-\lvert \mathfrak w\rvert^2 c_{2k+2}-\frac{c_{2k+2}}{(\mathcal N+1+k)^{\delta}}}(-\mathcal L_0) \varphi, \varphi \right\rangle \, ,
	\end{equs}
	where $C_{\mathrm{off}}$ is an absolute constant independent of all variables at play (see~(\ref{eq:variables}) for more details).

	With this we have come to the last part of the proof of the iterative upper bounds. Recall that we were trying to estimate $\langle \left(\mathcal A_+^N\right)^*\mathcal Z_{2k+2}\mathcal A_+^N\varphi,\varphi\rangle$ from above, for $\varphi \in \Gamma L^2_n$. In order to do so, we split it into diagonal and off-diagonal terms and we bounded each of them separately. We now put those estimates together:
	\begin{equs}
		& \left\langle \left(\mathcal A_+^N\right)^*\mathcal Z_{2k+2}\mathcal A_+^N \varphi,\varphi\right\rangle  \\
		& \le \left\langle \left(\mathcal A_+^N\right)^*\mathcal Z_{2k+2}\mathcal A_+^N \varphi,\varphi \right\rangle_{\mathrm{Diag}} + \left \lvert\left\langle \left(\mathcal A_+^N\right)^*\mathcal Z_{2k+2}\mathcal A_+^N \varphi,\varphi\right\rangle_{\mathrm{off}_1}\right\rvert+\left\lvert\left\langle \left(\mathcal A_+^N\right)^*\mathcal Z_{2k+2}\mathcal A_+^N \varphi,\varphi \right\rangle_{\mathrm{off}_2}\right\rvert \\
		& \le \left\langle \left(\frac{3\left(1+\frac{1}{2(k+1)^{1+\delta}}\right)}{2\pi \lvert \mathfrak w \rvert c_{2k+2}} \, \mathcal S_{2k+3}^N +\frac{C_{\mathrm{off}}\lvert \mathfrak w \rvert^2 \mathcal N^{\,2}}{1-\lvert \mathfrak w\rvert^2 c_{2k+2}-\frac{c_{2k+2}}{(\mathcal N+1+k)^{\delta}}}(-\mathcal L_0)\right) \varphi,\varphi \right\rangle  \\
		& \le \left\langle\frac{3\left(1+\frac{1}{2(k+1)^{1+\delta}}\right)}{2\pi \lvert \mathfrak w \rvert c_{2k+2}} \left(\mathcal S_{2k+3}^N +\frac{\pi C_{\mathrm{off}} \lvert \mathfrak w \rvert^3 c_{2k+2}\,\mathcal N^{\,2}}{1-\lvert \mathfrak w\rvert^2 c_{2k+2}-\frac{c_{2k+2}}{(\mathcal N+1+k)^{\delta}}}(-\mathcal L_0)\right) \varphi, \varphi \right\rangle \, ,\label{eq:near_end_UB} \\
	\end{equs}
	where we first used the triangular inequality, then the bounds established in the proof and finally factored out the coefficient of $\mathcal S_{2k+3}^N$, together with the estimate $\left(1+\frac{1}{2(k+1)^{1+\delta}}\right)^{-1} < 1$.

	At this point, the proof is almost complete. We just need a few more estimates on the coefficient of $(-\mathcal L_0)$. First we  multiply and divide it by $f_{2k+3}$ and then we use the estimate
	\begin{equ}
		\frac{\pi C_{\mathrm{off}}\lvert \mathfrak w \rvert^3 c_{2k+2}\,n^2}{\left(1-\lvert \mathfrak w\rvert^2 c_{2k+2}-\frac{c_{2k+2}}{(n+1+k)^{\delta}}\right)f_{2k+3}} \le \frac{C_{\mathrm{off}} \lvert \mathfrak w \rvert^3}{3 K^{2/3}}\frac{n^2}{(n+k)^{3+\delta}} \le 1 \, ,
	\end{equ}
	which holds for $K$ large enough.
	This tells us that expression~(\ref{eq:near_end_UB}) is upper bounded by
	\begin{equ}
		\left\langle\frac{3\left(1+\frac{1}{2(k+1)^{1+\delta}}\right)}{2\pi \lvert \mathfrak w \rvert c_{2k+2}} \left(\mathcal S_{2k+3}^N +f_{2k+3}(\mathcal N)(-\mathcal L_0)\right) \varphi, \varphi \right\rangle \, .
	\end{equ}
	Since the fraction in the above expression is exactly the definition of $c_{2k+3}$ given in the statement of Theorem~\ref{th:iterative_bounds}, the proof is complete.
\end{proof}

\section{Proof of the main theorem}\label{se:mainproof}
We will now use Theorem~\ref{th:iterative_bounds} to prove Theorem~\ref{th:main}.

\begin{proof}[Proof of Theorem~\ref{th:main}]
	The strategy of the proof is the following. First of all we apply Proposition~\ref{pr:Laplace_bulk}, so to reduce our problem to the one of finding estimates from above and from below on the quantity $\left\langle \mathfrak n^N, \left(\lambda N^2 - \mathcal L^N)^{-1}\right) \mathfrak n^N \right\rangle$. This is done by using the upper and lower bounds provided by Lemma~\ref{le:yau}, which we first simplify thanks to Lemma~\ref{le:innerproduct} and then further estimate with Theorem~\ref{th:iterative_bounds}.

	We start with the upper bound. We have
	\begin{equs}
		& \left\langle \mathfrak n^N, \left(\lambda N^2 - \mathcal L^N\right)^{-1} \mathfrak n^N \right\rangle \\
		& \le
		\left\langle \mathfrak n^N , \left((\lambda N^2 - \mathcal L_0)+\mathcal H^N_{2k+2}\right)^{-1} \mathfrak n^N \right\rangle \\
		& \le \left\langle \mathfrak n^N, \left( (\lambda N^2 - \mathcal L_0) +  c_{2k+2} \left((-\mathcal L_0^{\mathfrak w}) \mathcal S_{2k+2} - \frac{1}{(\mathcal N +k)^{1+\delta}}(-\mathcal L_0)\right)  \right)^{-1}\mathfrak n^N\right\rangle \, ,
	\end{equs}
	where the first inequality follows from Lemma~\ref{le:yau} and Lemma~\ref{le:innerproduct} and the second one from  estimate~(\ref{eq:iter_LB}) on $\mathcal H_{2k+2}$ given by Theorem~\ref{th:iterative_bounds}.
	Recalling the Fourier expression of $\mathfrak n^N$ given in~(\ref{eq:n_Fourier}), the above scalar product is exactly twice the sum~(\ref{eq:horror3}), written for $n=2$, $k_{2:n}=0$, $k_1 =0$  and Laplace variable $\lambda N^2$.
	Following exactly the same steps performed there (compare with~(\ref{eq:less_horror}) and include the factor that multiplies the sum to which Lemma~\ref{le:int_est} is applied), we obtain the upper bound 
	\begin{equ}
		\frac{f_{2k+3}(2)}{c_{2k+2}}\frac{3\pi}{\lvert\mathfrak w \rvert}\left(1+\frac{1}{2(k+1)^{1+\delta}}\right) \mathrm L^N_{2k+3}\left(\lambda N^2,z_{2k+3}(2)\right) \, . 
	\end{equ}
	By recalling the definition of $\mathrm L^N_k$, $f_{2k+3}$ and $z_{2k+3}$ given by~(\ref{eq:log}) and~(\ref{eq:z_and_f}), we can further estimate it by
	\begin{equs}
		&\frac{9 K^{\frac{2}{3}}(2k+5)^{3+\delta}}{c_{2k+2}}\frac{\pi}{\lvert\mathfrak w \rvert}\left(1+\frac{1}{2(k+1)^{1+\delta}}\right) \mathrm L_{2k+3}\left(\lambda ,K(2k+5)^{\frac{9}{2}+\frac{3}{2}\delta}\right) \\
		&\lesssim  C(\lvert\mathfrak w\rvert)k^{3+\delta}\left(\left({\log}\left(1+\lambda^{-1}\right)\right)^{\theta_{2k+3}}+ k^{\frac{9}{2}+\frac{3}{2}\delta}\right) \\
		 &= C(\lvert\mathfrak w\rvert) \left[k^{3+\delta}\left({\log}\left(1+\lambda^{-1}\right)\right)^{\theta_{2k+3}-\frac{2}{3}}+k^{\frac{15}{2}+\frac{5}{2}\delta}\left({\log}\left(1+\lambda^{-1}\right)\right)^{-\frac{2}{3}}\right]\times\\
  &\quad\times\left({\log}\left(1+\lambda^{-1}\right)\right)^{\frac{2}{3}}\,,\label{eq:UB_for_bulk}
	\end{equs}
	where in the inequality we used both that $c_{2k+2}$ is bounded away from $0$ and infinity uniformly in $k$. Expression~(\ref{eq:UB_for_bulk}) provides us with a valid upper bound for each value of $k$, with the best one being the one that minimizes the factor in front of $\left(\log\left(1+\lambda^{-1}\right)\right)^{\frac{2}{3}}$. We choose
	\begin{equ} \label{eq:k_of_mu}
		k = k(\lambda) = \left\lfloor (\log4)^{-1}\log\log\log\left(1+\lambda^{-1}\right) \right\rfloor \, ,
	\end{equ}
	which is greater than or equal to $0$ if $\lambda$ is such that $1+\lambda^{-1} \ge e^e$.
	Recalling the close formula for $\theta_k$ given in~(\ref{eq:explicit_alpha}), this gives us the estimates
	\begin{equs}
		\theta_{2k(\lambda)+3} &=  \frac{2}{3} + \frac{1}{3}\left(\frac{1}{4}\right)^{\left\lfloor (\log4)^{-1}\log\log\log\left(1+\lambda^{-1}\right) \right\rfloor} \le \frac{2}{3} + \frac{1}{3\log\log\left(1+\lambda^{-1}\right)} \, , \\
		k^{3+\delta} &\le \left(\log\log\log\left(1+\lambda^{-1}\right)\right)^{3+\delta} \, ,\\
		k^{\frac{15}{2}+\frac{5}{2}\delta}\left({\log}\left(1+\lambda^{-1}\right)\right)^{-\frac{2}{3}} & \le \left(\log\log\log\left(1+\lambda^{-1}\right)\right)^{\frac{15}{2}+\frac{5}{2}\delta}\left({\log}\left(1+\lambda^{-1}\right)\right)^{-\frac{2}{3}} \lesssim 1 \, ,\\
		\left(\mathrm{log}\left(1+\lambda^{-1}\right)\right)^{\theta_{2k+3}-\frac{2}{3}} & \le \left(\mathrm{log}\left(1+\lambda^{-1}\right)\right)^{\frac{1}{3\log\log\left(1+\lambda^{-1}\right)}} = \sqrt[3]{e} \, ,
	\end{equs}
	where the second inequality in the second to last line is justified by the fact that the left-hand side goes to $0$ as $\lambda \to 0$.

	Summing up, upper bounding expression~(\ref{eq:UB_for_bulk}) by the estimates above and recalling the expression of $\mathcal D^N(\lambda)$ derived in Proposition~\ref{pr:Laplace_bulk}, we obtain
	\begin{equs}
		\mathcal D^N(\lambda) &= \frac{1}{\lambda^2}+\frac{1}{\lambda^2}\lvert\mathfrak w\rvert^2 \left\langle \mathfrak{n}^N,\left(\lambda N^2-\mathcal L^N\right)^{-1}\mathfrak{n}^N\right\rangle  \\
		& \lesssim \frac{C(\lvert\mathfrak w\rvert)}{\lambda^2} \left(\log\log\log\left(1+\lambda^{-1}\right)\right)^{3+\delta}\left({\log}\left(1+\lambda^{-1}\right)\right)^{\frac{2}{3}} \, .
	\end{equs}
    Since the above inequality holds for every $N \in \mathbb N$, taking $\limsup_{N \to \infty}$ on both sides and observing that $\log\left(1+\lambda^{-1}\right) \sim_{\lambda \to 0} \lvert\log(\lambda)\rvert$ proves the upper bound of Theorem~\ref{th:main}.

	We now proceed to the lower bound. We have
	\begin{equs}
		&\left\langle \mathfrak n^N, \left(\lambda N^2 - \mathcal L^N\right)^{-1} \mathfrak n^N \right\rangle \\
        &\ge \left\langle \mathfrak n^N, \left((\lambda N^2 - \mathcal L_0)+\mathcal H^N_{2k+1}\right)^{-1}\mathfrak n^N \right\rangle \\
		& \ge \left\langle \mathfrak n^N, \left((\lambda N^2 - \mathcal L_0)+c_{2k+1} \left((-\mathcal L_0^{\mathfrak w})  \mathcal S_{2k+1}+ f_{2k+1}(\mathcal N)(-\mathcal L_0)\right) \right)^{-1}\mathfrak n^N \right\rangle \, ,
	\end{equs}
	where the first inequality follows again from Lemma~\ref{le:yau} and Lemma~\ref{le:innerproduct} and the second one from  estimate~(\ref{eq:iter_UB}) on $\mathcal H_{2k+1}$ given by Theorem~\ref{th:iterative_bounds}.
	Again, recalling the Fourier expression of $\mathfrak n^N$ given in~(\ref{eq:n_Fourier}), the above scalar product is exactly twice the sum~(\ref{eq:horror1}), written for $n=2$, $k_{2:n}=0$, $k_1 =0$ and Laplace variable $\lambda N^2$. Following exactly the same steps performed there (compare with~(\ref{eq:final})), we obtain the lower bound 
	\begin{equ}
		\frac{3\pi}{\lvert \mathfrak w \rvert}\frac{1}{c_{2k+1}}\left(1-\frac{1}{2k^{1+\delta}}\right)\frac{1}{f_{2k+2}(2)}\left[\mathrm L^N_{2k+2}\left(\lambda N^2, z_{2k+2}(2)\right) - f_{2k+2}(2)\right] \, .
	\end{equ}
	Recalling again the definitions of $\mathrm L^N_k$, $f_{2k+3}$ and $z_{2k+3}$ given by~(\ref{eq:log}) and~(\ref{eq:z_and_f}), and using the fact that $c_{2k+1}$ is bounded away from $0$ and infinity uniformly in $k$, the above is further lower bounded by
	\begin{equs}
		&\frac{\pi}{\lvert \mathfrak w \rvert c_{2k+1}K^{\frac23}(2k+4)^{3+\delta}}\left[\mathrm L_{2k+2}\left(\lambda, K(2k+4)^{\frac{9}{2}+\frac{3}{2}\delta}\right) - 3K^{\frac23}(2k+4)^{3+\delta}\right] \\
		& \gtrsim
		\frac{C(\lvert\mathfrak w\rvert)}{k^{3+\delta}}\left(\mathrm{log}\left(1+\lambda^{-1}\right) + k^{\left(\frac{9}{2}+\frac{3}{2}\delta\right)}\right)^{\theta_{2k+2}} \\
		& \ge \frac{C(\lvert\mathfrak w\rvert)}{k^{3+\delta}}\frac{1}{\left(\left({\log}\left(1+\lambda^{-1}\right)\right)^{\frac{2}{3}-\theta_{2k+2}} + k^{\left(\frac{9}{2}+\frac{3}{2}\delta\right)\left(\frac{2}{3}-\theta_{2k+2}\right)}\right)}\left({\log}\left(1+\lambda^{-1}\right)\right)^{\frac{2}{3}} \, ,\label{eq:LB_for_bulk}
	\end{equs}
	where we have used that fact that $L_{2k+2}$ goes to infinity as $\lambda\to0$ to absorb the $-1$ in the multiplicative constant.  As before, this gives a valid lower bound for each choice of $k$, this time with the best one being the one that maximizes the factor in front of $\mathrm{log}^{\frac{2}{3}}\left(1+\lambda^{-1}\right)$. We use the same choice made for the upper bound, namely~(\ref{eq:k_of_mu}). Recalling the close formula for $\theta_k$ given in~(\ref{eq:explicit_alpha}), this gives us the estimates
	\begin{equs}
        \theta_{2k(\lambda) +2} &= \frac{2}{3} - \frac{2}{3}\left(\frac{1}{4}\right)^{\left\lfloor (\log4)^{-1}\log\log\log\left(1+\lambda^{-1}\right) \right\rfloor} \ge \frac{2}{3} - \frac{1}{3\log\log\left(1+\lambda^{-1}\right)} \, , \\
		\left({\log}\left(1+\lambda^{-1}\right)\right)^{(\frac{2}{3}-\theta_{2k+2})} &\le \left(\mathrm{log}\left(1+\lambda^{-1}\right) \right)^{\frac{1}{3\log\log\left(1+\lambda^{-1}\right)}} = \sqrt[3]{e} \, ,\\
		k^{\left(\frac{9}{2}+\frac{3}{2}\delta\right)\left(\frac{2}{3}-\theta_{2k+2}\right)} &\le \left(\log\log\log\left(1+\lambda^{-1}\right)\right)^{\frac{9+3\delta}{6 \log\log\left(1+\lambda^{-1}\right)} } \le \sqrt[6]{e} \, ,\\
		k^{3+\delta} &\le \left(\log\log\log\left(1+\lambda^{-1}\right)\right)^{3+\delta} \, ,
	\end{equs}
 	with which we can upper bound all terms that appear in the denominator of~(\ref{eq:LB_for_bulk}).

	Summing up, lower bounding expression~(\ref{eq:LB_for_bulk}) by the estimates above and recalling the expression of $\mathcal D^N(\lambda)$ derived in Proposition~\ref{pr:Laplace_bulk}, we obtain
	\begin{equs}
		\mathcal D^N(\lambda) &= \frac{1}{\lambda^2}+\frac{1}{\lambda^2}\lvert\mathfrak w\rvert^2 \left\langle \mathfrak{n}^N,\left(\lambda N^2-\mathcal L^N\right)^{-1}\mathfrak{n}^N \right\rangle  \\
		& \gtrsim \frac{C(\lvert\mathfrak w\rvert)}{\lambda^2} \left(\log\log\log\left(1+\lambda^{-1}\right)\right)^{-3-\delta}\left({\log}\left(1+\lambda^{-1}\right)\right)^{\frac{2}{3}} \, .
	\end{equs}
    Since the above inequality holds for every $N \in \mathbb N$, taking $\liminf_{N \to \infty}$ on both sides and observing that $\log\left(1+\lambda^{-1}\right) \sim_{\lambda \to 0} \lvert\log(\lambda)\rvert$ proves the lower bound of Theorem~\ref{th:main}.
\end{proof}

\appendix
\section{Replacement Lemmas} \label{ap:replacement_lemmas}
The present Appendix is devoted to estimating sums corresponding to the hypothesis of Lemma~\ref{le:control_diag} in the context of the proof of Theorem~\ref{th:iterative_bounds}.

We start by stating some useful identities and setting up some notation.

\begin{lemma} \label{le:arctan_parameter}
	For every $\beta>0$ and $\gamma>0$
	\begin{equ}
		\int_0^{+\infty} \! \! \!\frac{1}{\beta+\gamma r^2} \dd r = \frac{\pi}{2\sqrt{\beta\gamma}}\, , \qquad \int_0^{\pi}\! \frac{1}{\beta+\gamma (\cos\theta)^2}\dd \theta = \frac{\pi}{\sqrt{\beta(\beta+\gamma)}} \, .
	\end{equ}
\end{lemma}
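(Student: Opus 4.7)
For the first identity I would perform the change of variable $r = \sqrt{\beta/\gamma}\,u$, so that
\begin{equ}
\int_0^{+\infty} \frac{1}{\beta+\gamma r^2}\,\dd r = \frac{1}{\sqrt{\beta\gamma}}\int_0^{+\infty}\frac{1}{1+u^2}\,\dd u = \frac{\pi}{2\sqrt{\beta\gamma}}\,,
\end{equ}
using the standard antiderivative $\arctan$. This is a one-line calculation and carries no real obstacle.

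For the second identity my plan is to reduce it to the first. By the symmetry $\cos(\pi-\theta)=-\cos\theta$, the integrand is symmetric about $\pi/2$, so
\begin{equ}
\int_0^{\pi}\frac{1}{\beta+\gamma(\cos\theta)^2}\,\dd\theta = 2\int_0^{\pi/2}\frac{1}{\beta+\gamma(\cos\theta)^2}\,\dd\theta\,.
\end{equ}
On $[0,\pi/2)$ I would substitute $u = \tan\theta$, so that $\dd u = (1+u^2)\,\dd\theta$ and $\cos^2\theta = 1/(1+u^2)$. The integral becomes
\begin{equ}
2\int_0^{+\infty}\frac{1}{\beta(1+u^2)+\gamma}\,\frac{1}{1+u^2}\cdot(1+u^2)\,\dd u = 2\int_0^{+\infty}\frac{1}{(\beta+\gamma)+\beta u^2}\,\dd u\,.
\end{equ}
Applying the first identity with parameters $(\beta+\gamma,\beta)$ in place of $(\beta,\gamma)$ yields
\begin{equ}
2\cdot\frac{\pi}{2\sqrt{\beta(\beta+\gamma)}} = \frac{\pi}{\sqrt{\beta(\beta+\gamma)}}\,,
\end{equ}
which is the desired expression.

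There is essentially no obstacle here: both identities are classical, and the only mild point to verify is that the substitution $u=\tan\theta$ is legitimate on $[0,\pi/2)$ (which it is, as $\tan$ is a smooth bijection onto $[0,+\infty)$ there, and the integrand has no singularity since $\beta,\gamma>0$). An alternative would be to apply the Weierstrass substitution $t=\tan(\theta)$ directly, or to rewrite $\cos^2\theta = \tfrac12(1+\cos 2\theta)$ and use the standard formula $\int_0^{2\pi}(A+B\cos\phi)^{-1}\dd\phi = 2\pi/\sqrt{A^2-B^2}$ for $A>|B|$, but the reduction to the first identity seems the most self-contained.
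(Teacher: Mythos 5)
Your proof is correct. The paper states Lemma~\ref{le:arctan_parameter} without proof, as both identities are classical; your derivation — the scaling substitution for the first integral, then reducing the second to the first via symmetry about $\pi/2$ followed by $u=\tan\theta$ — is clean and entirely valid.
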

The proof is omitted, since it is a change of variables of standard integrals.

Since this Appendix concerns expressions involving a large number of variables, and it is of technical nature anyway, let us list them all here once and for all, together with their range:
\begin{equ}\label{eq:variables}
	\lambda \in (0,+\infty)\, , \quad z \in (1,+\infty)\, , \quad \ell,m,k_1,\dots,k_n\in \mathbb Z^2_0\, , \quad N\in \mathbb N\, , \quad k\in \mathbb N, \, \, k\ge 2\, , \quad \mathfrak w \in \mathbb R^2 \, .
\end{equ}
In particular, the constant that we omit when using the notation $\lesssim$, which was introduced in subsection~\ref{su:notation}, is independent of all variables listed above. 

Let us start by introducing some additional notation:
\begin{equs}
	\alpha &\eqdef \alpha(\lambda,k_{1:n})\eqdef \lambda +\lvert k_{1:n}\rvert^2 \, , &\qquad \alpha_N &\eqdef \frac{\alpha}{N^2} \, ,\\
	\Gamma &\eqdef \Gamma(\ell,m,k_{2:n}) \eqdef \vert\ell\rvert^2 +\tfrac12\left(\lvert k_{1:n} \rvert^2\right) \, ,  & \qquad \Gamma^{\mathfrak w}  &\eqdef \Gamma^{\mathfrak w}(\ell,m,k_{2:n})\eqdef (\mathfrak w \cdot\ell)^2+\tfrac12\left((\mathfrak w\cdot k)^2_{1:n}\right) \, .
\end{equs}
Recall also the definitions of $\tilde\Gamma$ and $\tilde\Gamma^{\mathfrak w}$ given in~(\ref{eq:tildeGamma}). It is useful to observe that
\begin{equ}\label{eq:to_tilde_or_not_to_tilde}
	\tilde \Gamma = \Gamma - \ell\cdot k_1 \, , \qquad  \tilde\Gamma^{\mathfrak w} = \Gamma^{\mathfrak w} - (\mathfrak w \cdot k_1)(\mathfrak w \cdot \ell) \, , \qquad \frac{1}{2} \le \Gamma \lesssim \tilde\Gamma \lesssim \Gamma \, .
\end{equ}
Finally, we take note of the following derivatives, that will be needed later on:
\begin{equ}
	\partial_x \mathrm L(x,z) = -\frac{1}{x(x+1)} \, ,
	\qquad
	\partial_x \mathrm L^N_k(x,z) = -\theta_k \left(\mathrm L^N_k(x,z)\right)^{\theta_k-1}\frac{N^2}{x(x+N^2)} \, .
\end{equ}

Our goal is to study sum~(\ref{eq:moving_parts_S}) below, which is the one that appears in the estimates of the diagonal terms in Theorem~\ref{th:iterative_bounds}. First, in Lemma~\ref{le:replacement}, we replace the sum with an integral. Then, in Lemma~\ref{le:int_est}, we replace this integral with another one, which admits an explicit primitive. The first replacement comes at the price of an additive constant, the second one at the price of a lower-order term. 

We set
\begin{equs} 
	\tilde{\mathrm S}&\eqdef \tilde{\mathrm S}(\lambda,N,k,k_{1:n},\mathfrak w) \eqdef \sum_{\ell+m=k_1}\frac{\mathbb J^N_{\ell,m} }{\lambda + \tilde\Gamma +\tilde\Gamma^{\mathfrak w}\mathrm L^N_{k}\left(\lambda + \tilde \Gamma,z\right)} \, , \label{eq:moving_parts_S} \\ 
	\mathrm I &\eqdef \mathrm I(\lambda,N,k,k_{1:n},\mathfrak w) \\
    &\eqdef \int_0^{\pi} \int_0^1 \frac{1}{(r+\alpha_N)(r+\alpha_N+1)\left(1+ \lvert\mathfrak w\rvert^2 (\cos\theta)^2 \mathrm L^N_k\left(N^2(r+\alpha_N),z\right)\right)} \dd r \dd \theta \, . \label{eq:moving_parts}
\end{equs}
The definition of $\mathrm I$ is motivated by the change of variables~(\ref{eq:change_var}) below.
\begin{lemma}[From sum to integral] \label{le:replacement}
	There exists a constant $C_{\mathrm{Diag}}>0$ such that 
	\begin{equ}
		\lvert \tilde{\mathrm S} - \mathrm I \rvert \le C_{\mathrm{Diag}}
	\end{equ}
	for all in $\lambda \in (0,+\infty)$, $z \in (1,+\infty)$, $k_{1:n}\in  (\mathbb Z^2_0)^n$, $N\in \mathbb N$, $k\in \mathbb N$ such that $k\ge 2$ and $\mathfrak w \in \mathbb R^2$.
\end{lemma}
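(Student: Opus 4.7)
The plan is a standard sum-to-integral replacement, followed by a polar change of variables tailored to $\mathfrak w$. First, I would show that $\tilde{\mathrm S}$ agrees, up to an $O(1)$ error, with the two-dimensional integral $\int_{\mathbb R^2} f(\ell)\,d\ell$, where
$$
f(\ell) := \frac{\mathbb J^N_{\ell,\,k_1-\ell}}{g(\ell)}, \qquad g(\ell) := \lambda + \tilde\Gamma + \tilde\Gamma^{\mathfrak w}\,\mathrm L^N_k(\lambda + \tilde\Gamma, z).
$$
The classical Riemann-sum argument bounds this discretization error by $\sum_{\ell \in \mathbb Z^2}\|\nabla f\|_{L^\infty(Q_\ell)}$, with $Q_\ell$ the unit square centred at $\ell$. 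Using $|\nabla g(\ell)| \lesssim |\ell|(1 + |\mathfrak w|^2 \mathrm L^N_k)$ together with $g(\ell) \gtrsim (1 + |\ell|^2)(1 + |\mathfrak w|^2 \cos^2\theta\,\mathrm L^N_k)$, where $\theta = \theta(\ell)$ denotes the angle between $\ell$ and $\mathfrak w$, one gets $|\nabla f(\ell)| \lesssim |\ell|^{-3}$ after averaging in $\theta$ via Lemma~\ref{le:arctan_parameter}, so that the radial sum is convergent. The boundary contribution from $\mathbb J^N$ is $O(1/N)$, since the indicator's boundary has length $O(N)$ while $f \lesssim N^{-2}$ there.

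Next I would pass to polar coordinates $\ell = \rho(\cos\theta\, \hat{\mathfrak w} + \sin\theta\, \hat{\mathfrak w}^\perp)$, so that $\mathfrak w \cdot \ell = |\mathfrak w|\rho\cos\theta$. By~(\ref{eq:to_tilde_or_not_to_tilde}), $\tilde\Gamma = \Gamma - \ell \cdot k_1$ and $\tilde\Gamma^{\mathfrak w} = \Gamma^{\mathfrak w} - (\mathfrak w\cdot\ell)(\mathfrak w\cdot k_1)$, with the cross terms linear in $\ell$. Using this linearity together with the Lipschitz continuity of $y \mapsto 1/g$, I can replace $\tilde\Gamma$ by $\rho^2 + \alpha - \lambda$ and $\tilde\Gamma^{\mathfrak w}$ by $|\mathfrak w|^2 \rho^2 \cos^2\theta$ at the cost of a further $O(1)$ error. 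The substitution $u = \rho^2$ and the symmetry $\theta \mapsto \theta + \pi$ then yield
$$
\int_{\mathbb R^2} f(\ell)\,d\ell = \int_0^\pi d\theta \int_0^{N^2} \frac{du}{(u+\alpha)\bigl(1 + |\mathfrak w|^2 \cos^2\theta\,\mathrm L^N_k(u+\alpha, z)\bigr)} + O(1).
$$

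The last step is to identify the right-hand side with $\mathrm I$ up to $O(1)$. Substituting $u = N^2 r$ in $\mathrm I$ (so that $r + \alpha_N = (u+\alpha)/N^2$) and applying the partial-fraction identity
$$
\frac{N^2}{(u+\alpha)(u+\alpha+N^2)} = \frac{1}{u+\alpha} - \frac{1}{u+\alpha+N^2},
$$
together with $\mathrm L^N_k(N^2(r+\alpha_N), z) = \mathrm L^N_k(u+\alpha, z)$, gives
$$
\mathrm I = \int_0^\pi d\theta \int_0^{N^2} \left[\frac{1}{u+\alpha} - \frac{1}{u+\alpha+N^2}\right] \frac{du}{1 + |\mathfrak w|^2 \cos^2\theta\,\mathrm L^N_k(u+\alpha, z)}.
$$
The discrepancy with the previous display is $\int_0^\pi d\theta \int_0^{N^2}\frac{du}{(u+\alpha+N^2)\left(1 + |\mathfrak w|^2 \cos^2\theta\,\mathrm L^N_k\right)}$, which I would bound using $u + \alpha + N^2 \ge N^2$, the monotonicity of $\mathrm L^N_k$ in its first argument, and the angular integration formula of Lemma~\ref{le:arctan_parameter}, ultimately giving $O(1)$.

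The main obstacle is the first step. Bounding the discretization error uniformly in $(\mathfrak w, N, \lambda, z, k_{1:n})$ requires exploiting the cancellation between the log enhancement in $|\nabla g|$ and the enhanced lower bound on $g^2$; this cancellation is only complete after the angular variable has been averaged. Consequently the gradient estimate must be combined with an angular integration from the outset, or one should first perform the angular change of variables and only then discretize in the radial direction. A secondary subtlety is keeping the constant $C_{\mathrm{Diag}}$ uniform in the degenerate regimes $|\mathfrak w| \downarrow 0$ and $\lambda \downarrow 0$, which must be verified case by case.
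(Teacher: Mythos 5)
Your overall route (sum $\to$ integral $\to$ polar coordinates $\to$ partial fractions to match $\mathrm I$) is the same as the paper's, and your final partial-fraction comparison with $\mathrm I$ is exactly how the paper concludes. But the obstacle you flag as ``the main obstacle'' is a genuine gap, and neither of your two suggested workarounds resolves it. The pointwise bound $|\nabla f(\ell)|\lesssim |\ell|^{-3}$ is false: for $\ell$ nearly orthogonal to $\mathfrak w$ one has $g(\ell)\approx|\ell|^2$ (no $\mathrm L^N_k$ enhancement in the lower bound), while $|\nabla g|$ still carries a term of size $\approx|\mathfrak w|^2\mathrm L^N_k$ coming from $\nabla_\ell\big[\tilde\Gamma^{\mathfrak w}\mathrm L^N_k\big]$, giving $|\nabla f|\gtrsim|\mathfrak w|^2\mathrm L^N_k/|\ell|^3$ there, with $\mathrm L^N_k\sim(\log N)^{\theta_k}$. ``Averaging in $\theta$'' cannot save you because the Riemann-sum error $\sum_\ell\|\nabla f\|_{L^\infty(Q_\ell)}$ is a \emph{sum} over lattice points, not an angular integral; the $O(1)$ lattice points per radius sitting in the bad cone contribute an amount that grows like $(\log N)^{\theta_k}$. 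Your alternative (angular change of variables first, then discretize radially) does not make sense for a lattice sum either, since the lattice has no radial/angular product structure.

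The paper resolves this by explicitly carving out the problematic region. After several preliminary replacements (restricting to $|k_1|\le N/2$, replacing $\tilde\Gamma,\tilde\Gamma^{\mathfrak w}$ by $\Gamma,(\mathfrak w\cdot\ell)^2$, dropping the $|m|\le N$ constraint — each step separately estimated with additive $O(1)$ errors), Step~5 compares the sum and the integral \emph{only away from the bad cone} $\{|\mathfrak w\cdot\ell|\le|\mathfrak w|\}$, where a genuine gradient estimate is available, and bounds the sum and the integral in the bad cone \emph{separately}: the sum contribution is $\lesssim\sum_{\ell\ne 0}\mathbf 1_{\{|\mathfrak w\cdot\ell|<|\mathfrak w|\}}|\ell|^{-2}\lesssim 1$, and the corresponding integral slice $\{|\mathfrak w\cdot x|\le 2|\mathfrak w|/N\}$ is also $O(1)$. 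Without some version of this splitting your argument does not close. Two smaller gaps worth noting: your Lipschitz replacement of $\tilde\Gamma$ by $\rho^2+\alpha-\lambda$ and $\tilde\Gamma^{\mathfrak w}$ by $|\mathfrak w|^2\rho^2\cos^2\theta$ ``at the cost of $O(1)$'' requires the same kind of term-by-term analysis the paper carries out (the Lipschitz constant of $1/g$ again feels the $\mathrm L^N_k$ enhancement and the cross terms $(\mathfrak w\cdot\ell)(\mathfrak w\cdot k_1)\mathrm L^N_k$); and when $|k_1|>N$ the sum $\tilde{\mathrm S}$ vanishes outright while $\mathrm I$ does not, so ``$O(1/N)$ boundary'' is the wrong framing there — one must separately verify $\mathrm I\lesssim 1$ in that regime, which follows from $\alpha_N\ge 1$.
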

\begin{remark}\label{re:w_dep}
    The proof of Theorem~\ref{th:iterative_bounds} would work even if $C_{\mathrm{Diag}}$ depended on $\lvert \mathfrak w \rvert$ (up to changing the constant $K$). In the following proof, however, showing that $C_{\mathrm{Diag}}$ does not depend on $\mathfrak w$ does not come with any significant additional difficulties.
\end{remark}
\begin{proof}
	The proof proceeds through a number of steps, each of them consisting in slightly modifying the expression of $\tilde{\mathrm S}$, at the price of an additive constant, so that it becomes closer to the one of $\tilde{\mathrm I}$. More precisely, set $\mathrm{S}_0 \eqdef \tilde{\mathrm S}$ and $\mathrm S_6 \eqdef \mathrm I$. Then for all $i\in\{1,\dots,6\}$, step $i$ consists in showing $\lvert \mathrm S_{i-1} - \mathrm S_{i}\rvert \le C_i$, where the $\mathrm S_i$'s for  $i\in \{1,2,3,4,5\}$ will be defined in the proof below and $C_i$'s are some absolute constants independent of all variables at play. Compared to \cite[Appendix A]{cannizzaro2023gaussian}, we are faced with some additional technical difficulties, coming from the fact that our equation is in the strong coupling regime. 
	
    \medskip
	\noindent {\bf Step 1} We define $\mathrm S_1$ by replacing the condition $\lvert k_1 \rvert\le N$, contained in $\mathbb J_{\ell,m}^N$, by $\lvert k_1 \rvert \le N/2$. In doing so, we lose all summands of $\tilde{\mathrm S}$ corresponding to $\ell + m = k_1 \in [N/2,N]$. Without loss of generality, suppose $\lvert \ell \rvert \ge N/4$. The computation
	\begin{equ}
		\lvert \tilde{\mathrm S} - \mathrm S_1 \rvert \le 2 \sum_{\substack{\ell+m=k_1 \\ \frac{N}{2} \le \lvert k_1 \rvert \le N}}\frac{\mathbf 1_{\{N/4 \le \lvert \ell \rvert \le N\}}\, \mathbf 1_{\{\lvert k_1-\ell \rvert \le N\}}}{\lambda + \tilde\Gamma +\tilde\Gamma^{\mathfrak w}\mathrm L^N_{k}\left(\lambda + \tilde \Gamma,z\right)} \lesssim \sum_{\substack{\ell \in \mathbb Z^2_0 \\\frac{N}{4} \le \lvert \ell \rvert \le N}} \frac{1}{\lvert \ell \rvert^2} \le C_1
	\end{equ}
	completes step 1.
 
    \medskip
	\noindent {\bf Step 2} We define $\mathrm S_2$ by 
    \begin{equ}
        \mathrm S_2 \eqdef \sum_{\ell+m=k_1}\frac{\mathbb J^N_{\ell,m} \mathbf 1_{ \{ 1 \le \lvert k_1 \rvert \le N/2 \} } }{\lambda + \Gamma +\Gamma^{\mathfrak w}\mathrm L^N_{k}\left(\lambda + \Gamma,z\right)} \, .
    \end{equ}
    With respect to $\mathrm S_1$, we replaced $\tilde \Gamma$ and $\tilde \Gamma^{\mathfrak w}$ by $\Gamma$ and $\Gamma^{\mathfrak w}$ respectively.
	Using relationship~(\ref{eq:to_tilde_or_not_to_tilde}) on $\tilde \Gamma^{\mathfrak w}$ and the triangular inequality, we get that
	\begin{equ}\label{eq:horrorA1}
		\lvert \mathrm S_1 - \mathrm S_2 \rvert \le \sum_{\substack{\ell +m = k_1\\ 1 \le \lvert \ell \rvert, \lvert m \rvert \le N, \, \lvert k_1 \rvert \le \frac{N}{2} }} \frac{A+B+D}{\left[\lambda + \tilde\Gamma +\tilde\Gamma^{\mathfrak w}\mathrm L^N_{k}\left(\lambda + \tilde \Gamma,z\right)\right]\left[\lambda + \Gamma +\Gamma^{\mathfrak w}\mathrm L^N_{k}\left(\lambda +  \Gamma,z\right)\right]} \, ,
	\end{equ}
	where
	\begin{equs}
		& A \eqdef \left\lvert \Gamma - \tilde\Gamma\right\rvert \, ,
		\quad
		B \eqdef \left\lvert \Gamma^{\mathfrak w} \left(\mathrm L^N_k\left(\lambda+\Gamma,z\right)-\mathrm L^N_k\left(\lambda+\tilde\Gamma,z\right)\right) \right\rvert \, , \\
		& D \eqdef \left\lvert (\mathfrak w \cdot k_1)(\mathfrak w \cdot \ell) \mathrm L^N_k\left(\lambda+\tilde\Gamma,z\right)\right\rvert \, .
	\end{equs}
	We estimate~(\ref{eq:horrorA1}) by considering the terms with $A$, $B$ and $D$ separately.

	First, by dropping some terms from the denominator of~(\ref{eq:horrorA1}) (they are all positive) and using~(\ref{eq:to_tilde_or_not_to_tilde}) for $\tilde\Gamma$, we obtain the following upper bound for the $A$ term:
	\begin{equ}\label{eq:one_of_many}
		\sum_{\substack{\ell +m = k_1\\ 1 \le \lvert \ell \rvert, \lvert m \rvert \le N, \, \lvert k_1 \rvert \le \frac{N}{2}}} \frac{\lvert \ell\cdot k_1 \rvert}{\tilde \Gamma \, \Gamma} \lesssim \sum_{\lvert \ell \rvert \le N} \frac{\lvert \ell \cdot k_1 \rvert}{(\lvert \ell \rvert^2+\lvert k_1 \rvert^2)^2 } \lesssim 1\, ,
	\end{equ}
	where one can check that the constant on the right-hand side of~(\ref{eq:one_of_many}) above is independent of $k_1$  by splitting the sum into the two regions $\lvert \ell \rvert \ge \lvert k_1 \rvert$ and $\lvert \ell \rvert < \lvert k_1 \rvert$.

	Then, by the mean value theorem applied to the function $\mathrm L^N_k$ and the interval $[a,b]$, where $a\eqdef (\lambda+\tilde\Gamma)\wedge (\lambda+\Gamma)$ and $b\eqdef (\lambda+\tilde\Gamma) \vee (\lambda +\Gamma)$, we obtain
	\begin{equ}
		B \lesssim \Gamma^{\mathfrak w} \sup_{y \in [a,b]}\left\rvert \frac{N^2}{y(y+N^2)} \right\rvert  \left\lvert \tilde \Gamma - \Gamma \right\rvert \lesssim \Gamma^{\mathfrak w} \frac{1}{\Gamma}\lvert \ell \cdot k_1\rvert \, ,
	\end{equ}
	where we estimated the derivative of $\mathrm L^N_k$ by first using $\theta_k-1 < 0$ and $\mathrm L^N_k \ge 1$ and then lower bounding $y+N^2 \ge N^2$, so that $N^2$ cancels. Finally, we used~(\ref{eq:to_tilde_or_not_to_tilde}) again. Thus, by dropping $\lambda$ and $\Gamma$ from the second factor in the denominator of~(\ref{eq:horrorA1}) and lower bounding $\mathrm L^N_k$ by $1$, the term of sum~(\ref{eq:horrorA1}) corresponding to $B$ is upper bounded by
	\begin{equ}
		\sum_{\lvert \ell \rvert \le N} \frac{\Gamma^{\mathfrak w} \lvert \ell \cdot k_1 \rvert }{\Gamma\,\tilde \Gamma \, \Gamma^{\mathfrak w}} = \sum_{\lvert \ell \rvert \le N} \frac{ \lvert \ell \cdot k_1 \rvert }{\Gamma\,\tilde \Gamma} \lesssim 1 \, ,
	\end{equ}
	where the last inequalities follows from the same argument used in estimate~(\ref{eq:one_of_many}).

	Finally, we need to bound the sum corresponding to the $D$ term. First of all, observe that if $\mathfrak w \cdot k_1 =0$, then $D$ is identically $0$ and so in the following we can assume $\mathfrak w \cdot k_1 \neq 0$. In particular, this guarantees that for every $\ell$ (and for every $\theta$ when we will write the integral) the denominator in the following expressions does not vanish. We then drop some terms from the denominator, so to obtain an upper bound in which the function $\mathrm L^N_k$ has simplified:
	\begin{equ}
		\sum_{\substack{\ell +m = k_1\\ 1 \le \lvert \ell \rvert, \lvert m \rvert \le N,\, \lvert k_1 \rvert \le \frac{N}{2}}} \! \! \!  \frac{\left\lvert (\mathfrak w \cdot k_1)(\mathfrak w \cdot \ell) \mathrm L^N_k\left(\lambda+\tilde\Gamma,z\right)\right\rvert}{\left[\tilde\Gamma^{\mathfrak w}\mathrm L^N_{k}\left(\lambda + \tilde \Gamma,z\right)\right]\Gamma}
		\lesssim
		\sum_{1\le\lvert \ell \rvert \le N }  \frac{\lvert (\mathfrak w \cdot k_1)(\mathfrak w \cdot \ell) \rvert}{\left[(\mathfrak w \cdot \ell)^2+(\mathfrak w \cdot k_1)^2\right] \lvert \ell \rvert^2   } \, .
	\end{equ}
	Finally, we check that the right-hand side of the above can be upper bounded by a convergent series whose sum, as usual, does not depend on any of the variables at play. We do this by passing to an integral.
    This is justified after excluding $\ell$ such that $\lvert\mathfrak w\cdot \ell\rvert\leq \lvert\mathfrak w\rvert$, which can be treated separately.
    For more details see Step 5, where this is done carefully for the main term.
    We write this integral using polar coordinates:
	\begin{equs}
		\int_0^{2\pi} \int_0^N \frac{\lvert (\mathfrak w \cdot k_1)\rvert \lvert \mathfrak w \rvert \lvert \cos(\theta)\rvert  r^2 }{\left[\lvert\mathfrak w \rvert^2 (\cos\theta)^2 r^2  + (\mathfrak w \cdot k_1)^2 \right]r^2 } \dd r \dd \theta & =  \int_0^{2\pi} \int_0^N \frac{\lvert (\mathfrak w \cdot k_1) \rvert \lvert \mathfrak w \rvert  \lvert \cos(\theta)\rvert}{\lvert\mathfrak w \rvert^2  (\cos\theta)^2  r^2+ (\mathfrak w \cdot k_1)^2  }\dd r \dd \theta \\
		& \leq \frac{\pi}{2} \int_0^{2 \pi }\frac{\lvert (\mathfrak w \cdot k_1) \rvert \lvert \mathfrak w \rvert \lvert \cos(\theta)\rvert }{\sqrt{\lvert\mathfrak w \rvert^2  (\cos\theta)^2 (\mathfrak w \cdot k_1)^2 }} \dd r \dd \theta = \pi^2 \, ,
	\end{equs}
	where we used Lemma~\ref{le:arctan_parameter} to compute the integral in $r$.

	Thus step 2 is completed with $C_2$ equal to the sum of the three constants with which we have estimated the sum corresponding to the $A$, $B$ and $D$ terms.

    \medskip
	\noindent {\bf Step 3} We define $\mathrm S_3$ by 
    \begin{equ}
        \mathrm S_3 \eqdef \sum_{\ell+m=k_1}\frac{\mathbb J^N_{\ell,m} \mathbf 1_{ \{ 1 \le \lvert k_1 \rvert \le N/2 \} } }{\lambda + \Gamma +(\mathfrak w \cdot \ell)^2\mathrm L^N_{k}\left(\lambda + \Gamma,z\right)} \, .
    \end{equ} 
    With respect to $\mathrm S_2$, we have replaced $\Gamma^{\mathfrak w}$ by $(\mathfrak w \cdot \ell)^2$. If $(\mathfrak w \cdot k)^2_{1:n} = 0$, we do not have anything to prove. Otherwise, we estimate 
	\begin{equs}
		\lvert \mathrm S_2 - \mathrm S_3 \rvert & \lesssim \sum_{\substack{\ell +m = k_1\\ 1 \le \lvert \ell \rvert, \lvert m \rvert \le N, \, \lvert k_1 \rvert \le \frac{N}{2} }} \frac{(\mathfrak w \cdot k)^2_{1:n} \, \mathrm L^N_k\left(\lambda+\Gamma,z\right)}{\left[\lambda + \Gamma +\Gamma^{\mathfrak w}\mathrm L^N_{k}\left(\lambda + \Gamma,z\right)\right]\left[\lambda + \Gamma +(\mathfrak w \cdot \ell)^2\mathrm L^N_{k}\left(\lambda +  \Gamma,z\right)\right]} \\
		& \lesssim \sum_{1\le\lvert \ell \rvert \le N } \frac{(\mathfrak w \cdot k)^2_{1:n}}{\left[(\mathfrak w \cdot \ell )^2 +(\mathfrak w \cdot k)^2_{1:n}\right]\left[\lvert \ell \rvert^2 + \lvert k_{1:n}\rvert^2\right]} \\
		& \lesssim \int_0^N \int_0^{2\pi}  \frac{(\mathfrak w \cdot k)^2_{1:n} r}{(\lvert \mathfrak w \rvert^2  r^2 (\cos\theta)^2+ (\mathfrak w \cdot k)^2_{1:n})(r^2+\lvert k_{1:n}\rvert^2)}  \dd \theta \dd r\\
		& \lesssim \int_0^N \frac{(\mathfrak w \cdot k)^2_{1:n}}{r^2+\lvert k_{1:n}\rvert^2} \frac{r}{\sqrt{(\mathfrak w \cdot k)^2_{1:n}((\mathfrak w \cdot k)^2_{1:n}+\lvert \mathfrak w \rvert^2 r^2)}} \dd r  \, ,
	\end{equs}
	where we first dropped $\lambda + \Gamma$ and $(\mathfrak w \cdot \ell)^2 \mathrm L^N_k$ from the first and second factor in the denominator respectively, then simplified $\mathrm L^N_k$ and finally used Lemma~\ref{le:arctan_parameter} to estimate the integral in $\theta$.
    Before moving from the sum to the integral, one once again needs to exclude $\ell$ such that $\lvert\mathfrak w\cdot\ell\rvert\leq\lvert\mathfrak w\rvert$.
    These can again easily be treated separately.
    We now simplify the multiplicative factor $(\mathfrak w \cdot k)^2_{k_{1:n}}$ in the denominator and drop the additive one, so that we can simplify $r$ and obtain the upper bound
	\begin{equs}
		\int_0^N \frac{\lvert \mathfrak w \cdot k \rvert_{1:n}}{r^2+\lvert k_{1:n}\rvert^2} \frac{1}{\lvert \mathfrak w \rvert} \dd r \lesssim \frac{\lvert \mathfrak w \rvert \lvert k_{1:n} \rvert}{\lvert \mathfrak w \rvert} \frac{1}{\sqrt{\lvert k_{1:n}\rvert^2}} = 1 \, ,
	\end{equs}
	where we applied the Cauchy-Schwarz inequality to the numerator and estimated the integral by using Lemma~\ref{le:arctan_parameter}.
	This concludes step 3.

    \medskip
	\noindent {\bf Step 4} We define $\mathrm S_4$ by 
    \begin{equ}
		\mathrm S_4 \eqdef \sum_{\ell+m=k_1}\frac{\mathbf 1_{ \{ 1 \le \lvert \ell \rvert \le N \} } \mathbf 1_{ \{ 1 \le \lvert k_1 \rvert \le N/2 \} }}{\lambda + \Gamma +(\mathfrak w \cdot \ell)^2\mathrm L^N_{k}\left(\lambda + \Gamma,z\right)} \, .
	\end{equ}
    With respect to $\mathrm S_3$, we have removed the constraint $\mathbf 1_{\{\lvert m \rvert \le N\}}$. More precisely, we are adding to $\mathrm S_3$ the terms indexed by the set
	\begin{equ}
		\left\{\ell,m \in \mathbb Z^2_0 \, \colon \ell + m = k_1, \, \lvert k_1 \rvert \le \tfrac{N}{2}, \, \lvert \ell \rvert \le N, \, \lvert m \rvert > N \right\} \, ,
	\end{equ}
	which is contained (thanks to the extra condition on $\lvert k_1 \rvert$ imposed in step 1) in
	\begin{equ}
		\left\{\ell,m \in \mathbb Z^2_0 \, \colon \ell + m = k_1, \, \lvert k_1 \rvert \le \tfrac{N}{2}, \, \lvert \ell \rvert \le N, \, \lvert \ell \rvert \ge \tfrac{N}{2}\right\} \, .
	\end{equ}
	The sum over this last index set can be bounded as done in step 1.

    \medskip
	\noindent {\bf Step 5} We define $\mathrm S_5$ by
    \begin{equ}\label{eq:S5}
        \mathrm S_5 \eqdef \int_{\mathbb R^2} \frac{\mathbf1_{\{\lvert x \rvert \le 1\}}}{\alpha_N+\lvert x \rvert^2 + (\mathfrak w \cdot x)^2 \mathrm L^N_k\left(N^2(\alpha_N+\lvert x \rvert^2), z\right)} \dd x\, . 
    \end{equ}
	We set $Q^N_{\ell}\eqdef \tfrac1N [\ell-\tfrac{1}{2},\ell-\tfrac{1}{2}]^2 \subset \mathbb R^2$ and by multiplying and dividing $\mathrm S_4$ by $\tfrac{1}{N^2}$ we obtain
	\begin{equ}\label{eq:S4Sum}
		\mathrm S_4
        =
        \sum_{\ell+m=k_1}  \mathbf 1_{ \{ 1 \le \lvert \ell \rvert \le N \} } \mathbf 1_{ \{ 1 \le \lvert k_1 \rvert \le N/2 \} } \int_{Q^N_\ell} I_N\left(\tfrac{\ell}{N}\right) \dd x \, ,
	\end{equ}
	where we denoted by $I_N$ the integrand of $\mathrm S_5$ (without the indicator function $\mathbf1_{\{\lvert x \rvert \le 1\}}$).
	To show $\lvert\mathrm S_4 - \mathrm S_5\rvert \le 
        C_5$,
        we write $\mathrm S_5$ as the sum in $\ell$ of the integrals over $Q_\ell^N$.
        Since for large $N$ the summand $I_N(x)$ changes very rapidly when $x$ and $\mathfrak w$ are almost orthogonal, we will treat this case separately.
        Note first that
        \begin{equ}
            \sum_{\ell+m=k_1}\frac{\mathbf 1_{ \{ 1 \le \lvert \ell \rvert \le N \} } \mathbf 1_{ \{ 1 \le \lvert k_1 \rvert \le N/2 \} }\mathbf{1}_{\{|\ell\cdot\mathfrak w|\leq|\mathfrak w|\}}}
            {\lambda + \Gamma +(\mathfrak w \cdot \ell)^2\mathrm L^N_{k}\left(\lambda + \Gamma,z\right)}
            \lesssim \sum_{\ell\neq 0}\frac{\mathbf{1}_{\{|\ell\cdot\mathfrak w|<|\mathfrak w|\}}}{|\ell|^2}
            \lesssim 1\,.
        \end{equ}
        For the integral note that $\bigcup_{|\ell\cdot \mathfrak w|\leq \mathfrak w}Q_\ell^N$ is contained in $\{x\in\mathbb R^2: |x\cdot \mathfrak w|\leq \frac2N|\mathfrak w|\}$.
        Using this we see
        \begin{equ}
            \sum_{\substack{1\leq|\ell|\leq N\\|\mathfrak w\cdot\ell|\leq |\mathfrak w|}}
            \int_{Q^N_\ell}I_N(x)\dd x\leq \int_{|\mathfrak w\cdot x|\leq \frac{2}{N}|\mathfrak w|}I_N(x)\dd x\leq \int_{-2}^{2}\int_{-\infty}^\infty \frac{1}{\alpha+|x|^2}\dd x_1\dd x_2\lesssim 1\,,
        \end{equ}
        where we used a change of variables in $x$ (i.e. scaling by $N$) and the fact that $\alpha\geq 1$ as well as Lemma \ref{le:arctan_parameter}.
        Also note that the $x$ appearing in the rewriting of $\mathrm S_4$ (\ref{eq:S4Sum}) but not in $\mathrm S_5$, are contained in $\{x:1\leq|x|\leq 1+\frac{\sqrt{2}}{2N}\}$ and
        \begin{equ}
		\int_{1\le \lvert x \rvert \le 1+\tfrac{1}{2N}} I_N(x)\dd x \lesssim \sup_{1\le \lvert x \rvert \le 1+\tfrac{1}{2N}} \lvert I_N(x)\rvert \lesssim 1\,.
	\end{equ}
        It thus remains to show that 
	\begin{equ}\label{eq:last_one?}
		\sum_{\substack{1\le\lvert \ell \rvert \le N\\|\ell\cdot\mathfrak w|\geq|\mathfrak w|}} \int_{Q^N_{\ell}} \left\lvert I_N\left(\tfrac{\ell}{N}\right) - I_N(x) \right\rvert \dd x \lesssim 1 \, .
	\end{equ}
	In order to prove~(\ref{eq:last_one?}), we estimate, by the mean value theorem applied to the function $I_N$ and the line segment $[\tfrac{\ell}{N} , x]$, 
	\begin{equ} \label{eq:gradient_estimate}
		\left\lvert I_N\left(\tfrac{\ell}{N}\right) - I_N(x) \right\rvert
        \le
        \sup_{y \in Q^N_{\ell}} \left\lvert \nabla I_N(y) \right\rvert \left\lvert \tfrac{\ell}{N} - x \right\rvert
        \lesssim
        \frac{1}{N}(E+F+G)\, ,
	\end{equ}
	where $E$, $F$ and $G$ are the suprema over $Q_\ell^N$ of the the norms of the three terms in the expression of the gradient below:
	\begin{equ} \label{eq:grad_manipulated}
		 -\frac{1}{2}\nabla I_N(x) 
		 =\frac{x + (\mathfrak w \cdot x)\mathrm L^N_k(N^2(\lvert x\rvert^2 + \alpha_N),z)\mathfrak w+ \frac{(-\theta_k)(\mathfrak w \cdot x)^2  (\mathrm L^N(N^2(\lvert x\rvert^2 + \alpha_N),z))^{1-\theta_k}x}{(\lvert x \rvert^2 +\alpha_N)(\lvert x \rvert^2 +\alpha_N +1)}}
        {\left(\alpha_N+\lvert x \rvert^2 + (\mathfrak w \cdot x)^2\mathrm L^N_k(N^2(\lvert x\rvert^2 + \alpha_N),z)\right)^2} \, .
	\end{equ}
    Since each $Q_\ell^N$ has an area of $\frac{1}{N^2}$, we need to show that
	\begin{equ}\label{eq:last_one!}
		\sum_{\substack{1\le\lvert \ell \rvert \le N\\|\ell\cdot\mathfrak w|\geq|\mathfrak w|}} E+F+G \lesssim N^3 \, .
	\end{equ}
    Note first that for all $\ell$ such that $|\ell\cdot\mathfrak w|\geq |\mathfrak w|$ and $|\ell|\geq 1$ and for all $x\in Q_\ell^N$ it holds that
    \begin{equs}\label{eq:geometry}
        \left(1-\frac{\sqrt{2}}{2}\right)\left\lvert\frac{\ell}{N}\right\rvert &\leq \lvert x\rvert\leq \left(1+\frac{\sqrt{2}}{2}\right)\left\rvert\frac{\ell}{N}\right\rvert \, ,\\
        \left(1-\frac{\sqrt{2}}{2}\right)\left\rvert\mathfrak w\cdot\frac{\ell}{N}\right\rvert &\leq \left\rvert\mathfrak w\cdot x\right\rvert \leq \left(1+\frac{\sqrt{2}}{2}\right)\left\rvert\mathfrak w\cdot\frac{\ell}{N}\right\rvert\,.
    \end{equs}
    For $E$ note that
    \begin{equ}
        \sup_{x\in Q_\ell^N}\frac{|x|}{\left(\alpha_N+\lvert x \rvert^2 + (\mathfrak w \cdot x)^2\mathrm L^N_k(N^2(\lvert x\rvert^2 + \alpha_N),z)\right)^2}\lesssim \frac{N^3}{|\ell|^3}\,.
    \end{equ}
    For $F$ note that
    \begin{equ}
    \sup_{x\in Q_\ell^N}\frac{|\mathfrak w\cdot x|\mathrm L^N_k(N^2(\lvert x\rvert^2 + \alpha_N),z)|\mathfrak w|}{\left(\alpha_N+\lvert x \rvert^2 + (\mathfrak w \cdot x)^2\mathrm L^N_k(N^2(\lvert x\rvert^2 + \alpha_N),z)\right)^2}\lesssim \frac{N^3|\mathfrak w|}{|\ell|^2|\mathfrak w\cdot \ell|}\,,
    \end{equ}
    where we used one of the factors of the denominator to cancel the $\mathrm L_k^N$ in the numerator.
    Now note again by (\ref{eq:geometry}) that
    \begin{equ}
        \sum_{\substack{1\le\lvert \ell \rvert \le N\\|\ell\cdot\mathfrak w|\geq|\mathfrak w|}}\frac{\lvert\mathfrak w\rvert}{|\ell|^2|\mathfrak w\cdot\ell|}\lesssim \int_{|x\cdot\mathfrak w|\geq (1-\frac{\sqrt{2}}{2})\lvert\mathfrak w\rvert} \frac{|\mathfrak w|}{|x|^2|\mathfrak w\cdot x|}\dd x\lesssim 1\,.
    \end{equ}
    Finally for $G$ by similar arguments $G\lesssim \frac{N^3}{\ell^3}$, (note that $0\leq\theta_k\leq 1$).

	This completes step 5, with $C_5$ equal to the sum of the three constants with which we have estimated the sum corresponding to the $E$, $F$ and $G$ terms.

    \medskip
    \noindent {\bf Step 6} 
    Let $\theta_{\mathfrak w}$ be the angle from the first coordinate axis to $\mathfrak w$. By successively performing the change of variables $x \mapsto r (\cos \theta, \sin \theta)$ and $r^2 \mapsto r$, we first rewrite $\mathrm S_5$ as follows.
	\begin{equs}
		\mathrm S_5 & =\int_{0}^{2 \pi} \int_0^1 \frac{r}{r^2+\alpha_N+r^2 \lvert\mathfrak w\rvert^2 \cos^2(\theta-\theta_{\mathfrak w}) \mathrm L^N_k\left(N^2(r^2+\alpha_N),z\right)} \dd r \dd \theta \\
		& = \int_0^{2 \pi} \frac{1}{2}\int_0^1\frac{1}{r+\alpha_N +r \lvert\mathfrak w\rvert^2 \cos^2(\theta-\theta_{\mathfrak w}) \mathrm L^N_k\left(N^2(r+\alpha_N),z\right)} \dd r \dd \theta \\
		&= \int_0^{\pi} \int_0^1 \frac{1}{r+\alpha_N +r \lvert\mathfrak w\rvert^2 (\cos\theta)^2 \mathrm L^N_k\left(N^2(r+\alpha_N),z\right)} \dd r \dd \theta \, , \label{eq:int1}
	\end{equs}
	where in the last equality we used the $\pi$-periodicity of the integrand as a function of $\theta$.
    We then recall the definition of $\mathrm{S_6} \eqdef \mathrm{I}$ given in~(\ref{eq:moving_parts}).
    We observe that the absolute value of the difference between (\ref{eq:int1}) and (\ref{eq:moving_parts}), after the simplifications that occur, is upper bounded by	\begin{equ}\label{eq:int3}
		\int_0^{\pi} \int_0^1 \frac{(r+\alpha_N)^2(1+\lvert \mathfrak w\rvert^2(\cos\theta)^2\mathrm L^N_k)+ \alpha_N\lvert\mathfrak w \rvert^2 (\cos\theta)^2 \mathrm L^N_k}{\left[r+\alpha_N +r \lvert\mathfrak w\rvert^2 (\cos\theta)^2 \mathrm L^N_k\right]\left[(r+\alpha_N)(r+\alpha_N+1)(1+ \lvert\mathfrak w\rvert^2 (\cos\theta)^2 \mathrm L^N_k)\right]} \dd r \dd \theta \, ,
	\end{equ}
	where we omitted the argument of $\mathrm L^N_k$ for ease of reading. We thus study the two terms corresponding to the two summand of the numerator separately. The first one can be estimated by
\begin{equ}
		\int_0^{\pi} \int_0^1 \frac{(r+\alpha_N)}{\left[r+\alpha_N \right]\left[(r+\alpha_N+1)\right]} \dd r \dd \theta \le
		\int_0^{\pi} \int_0^1 \frac{1}{r+\alpha_N+1} \dd r \dd \theta \lesssim 1
	\end{equ}
	and the second one by
	\begin{equs}
		&\int_0^{\pi} \int_0^1 \frac{\alpha_N\lvert\mathfrak w \rvert^2 (\cos\theta)^2 \mathrm L^N_k}{\left[r+\alpha_N \right]\left[(r+\alpha_N)(r+\alpha_N+1)\lvert\mathfrak w\rvert^2 (\cos\theta)^2 \mathrm L^N_k\right]} \dd r \dd \theta \\
		&= \int_0^{\pi} \int_0^1 \frac{\alpha_N}{(r+\alpha_N)^2(r+\alpha_N+1)} \dd r \dd \theta \\
		& \lesssim \alpha_N \int_0^1 \frac{1}{(r+\alpha_N)^2} \dd r = \alpha_N \left(\frac{1}{\alpha_N}-\frac{1}{1+\alpha_N}\right) \le 1
		\, .    \end{equs}
    This concludes step 6 and with it also the proof of Lemma~\ref{le:replacement}.
\end{proof}

\begin{lemma}[From integral to estimate] \label{le:int_est}
	Recall the definition of $\tilde{\mathrm S}$ given in~(\ref{eq:moving_parts_S}) (in particular that it depends on $k$). There exists a constant $C_{\mathrm{Diag}}$ such that, for any even $k\ge 2$,
	\begin{equ}
		\tilde{\mathrm S} \le \frac{3\pi}{2\lvert \mathfrak w\rvert}\left(1+\frac{\lvert \mathfrak w \rvert C_{\mathrm{Diag}}}{z^{\theta_{k+1}}}\right) \mathrm L^N_{k+1}\left(\lambda +\lvert k_{1:n}\rvert^2,z\right) , \label{eq:S_ub}
    \end{equ}
    whereas, for any odd $k\ge 3$,
    \begin{equ}
		\tilde{\mathrm S} \ge  \frac{3\pi}{2\lvert \mathfrak w\rvert} \left[\left(1-\left(\lvert \mathfrak w\rvert C_{\mathrm{Diag}}+2+\frac{3}{\lvert \mathfrak w \rvert}\right)\frac{1}{z^{\frac{\theta_k}{2}}}\right) \mathrm L^N_{k+1}(\alpha,z) - \frac{4}{3}z^{\theta_{k+1}} \right] , \label{eq:S_lb}
	\end{equ}
	uniformly in $\lambda \in (0,+\infty)$, $z \in (1,+\infty)$, $k_{1:n}\in  (\mathbb Z^2_0)^n$, $N\in \mathbb N$ and $\mathfrak w\in\mathbb R^2$.
\end{lemma}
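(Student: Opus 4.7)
The plan is to reduce $\tilde{\mathrm S}$ to the explicit integral $\mathrm I$ defined in~(\ref{eq:moving_parts}), and then evaluate $\mathrm I$ asymptotically by a change of variables and the fundamental theorem of calculus. By Lemma~\ref{le:replacement} we have $\lvert \tilde{\mathrm S} - \mathrm I \rvert \le C_{\mathrm{Diag}}$, so up to this additive error it is enough to estimate $\mathrm I$. The angular integration in $\mathrm I$ is performed by Lemma~\ref{le:arctan_parameter} applied with $\beta = 1$ and $\gamma = \lvert \mathfrak w \rvert^2\mathrm L_k^N(N^2(r+\alpha_N),z)$, yielding
\begin{equ}
\mathrm I \, = \, \pi \int_0^1 \frac{\dd r}{(r+\alpha_N)(r+\alpha_N+1)\sqrt{1+\lvert \mathfrak w \rvert^2 \mathrm L_k^N\!\left(N^2(r+\alpha_N),z\right)}} \, .
\end{equ}

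The next step is to replace $\sqrt{1+\lvert \mathfrak w \rvert^2 \mathrm L_k^N}$ by $\lvert \mathfrak w \rvert\sqrt{\mathrm L_k^N}$. Using $\sqrt{a+b}\le \sqrt a + \sqrt b$ in one direction and monotonicity in the other, together with the lower bound $\mathrm L_k^N\ge z^{\theta_k}$, the relative error of this replacement is at most $\lvert \mathfrak w \rvert^{-1} z^{-\theta_k/2}$ (both from above and from below). Once the replacement is made, a direct computation using the chain rule and the identity $\theta_{k+1}-1=-\theta_k/2$ shows that
\begin{equ}
\frac{\dd}{\dd r}\mathrm L_{k+1}^N\!\left(N^2(r+\alpha_N),z\right) \, = \, -\frac{\theta_{k+1}}{(r+\alpha_N)(r+\alpha_N+1)\sqrt{\mathrm L_k^N\!\left(N^2(r+\alpha_N),z\right)}} \, ,
\end{equ}
so the fundamental theorem of calculus gives the leading-order identity
\begin{equ}
\frac{\pi}{\lvert \mathfrak w \rvert}\int_0^1 \frac{\dd r}{(r+\alpha_N)(r+\alpha_N+1)\sqrt{\mathrm L_k^N}} \, = \, \frac{\pi}{\lvert \mathfrak w \rvert \theta_{k+1}}\Bigl[\mathrm L_{k+1}^N(\alpha,z) - \mathrm L_{k+1}^N(\alpha+N^2,z)\Bigr] \, .
\end{equ}
The boundary term is controlled via $\mathrm L_{k+1}^N(\alpha+N^2,z) = \mathrm L_{k+1}(1+\alpha_N,z)\le (\log 2 + z)^{\theta_{k+1}}\lesssim z^{\theta_{k+1}}$.

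From here the two bounds follow by choosing the sign of each inequality and using the parity of $k$ to control $1/\theta_{k+1}$. For the upper bound, $k$ even means $\theta_{k+1}\in[2/3,1]$, hence $1/\theta_{k+1}\le 3/2$; the boundary term contributes negatively and may be dropped, and the additive error $C_{\mathrm{Diag}}$ from Lemma~\ref{le:replacement} is absorbed into the prefactor $(1+\lvert\mathfrak w\rvert C_{\mathrm{Diag}}/z^{\theta_{k+1}})$ thanks to $\mathrm L_{k+1}^N(\alpha,z)\ge z^{\theta_{k+1}}$. For the lower bound, $k$ odd means $\theta_{k+1}\in[1/2,2/3]$, hence $1/\theta_{k+1}\ge 3/2$; the boundary term must be kept and, after bounding it by $\lesssim z^{\theta_{k+1}}$, produces the additive $-\frac{4}{3}z^{\theta_{k+1}}$ correction. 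The multiplicative error from the square-root replacement contributes the $3/\lvert\mathfrak w\rvert$ summand inside the parenthesis, the Lemma~\ref{le:replacement} error contributes the $\lvert\mathfrak w\rvert C_{\mathrm{Diag}}$ term (via $z^{\theta_{k+1}}\ge z^{\theta_k/2}$ since $\theta_k\le 1$ and $z>1$), and the remaining numerical slack in passing from $2/\theta_{k+1}$ to $4/3$ accounts for the remaining $+2$. The main obstacle is the last step: keeping track of the hierarchy of errors (additive constant from the replacement lemma, additive boundary term, multiplicative square-root error, and the slack in $1/\theta_{k+1}$ versus $3/2$) and showing they all fit inside the precise prefactor $1-(\lvert\mathfrak w\rvert C_{\mathrm{Diag}}+2+3/\lvert\mathfrak w\rvert)z^{-\theta_k/2}$ required by the statement, rather than merely giving a bound of the right shape.
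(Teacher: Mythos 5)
Your plan matches the paper's approach step for step: reduce $\tilde{\mathrm S}$ to the integral $\mathrm I$ via Lemma~\ref{le:replacement}, integrate out $\theta$ with Lemma~\ref{le:arctan_parameter}, and recover the exponent $\theta_{k+1}$ from the explicit primitive of $\bigl((r+\alpha_N)(r+\alpha_N+1)\sqrt{\mathrm L_k^N}\bigr)^{-1}$. Your variant --- a multiplicative replacement $\frac{1}{\sqrt{1+\lvert\mathfrak w\rvert^2\mathrm L_k^N}}=\frac{1}{\lvert\mathfrak w\rvert\sqrt{\mathrm L_k^N}}\bigl(1+O(\lvert\mathfrak w\rvert^{-1}z^{-\theta_k/2})\bigr)$ followed by the fundamental theorem of calculus in $r$ --- is, if anything, a bit cleaner than the paper's route (change of variables $u=\mathrm L^N(N^2(r+\alpha_N),z)$, then an additive split $\mathrm I_1-\mathrm I_2$), because the paper's estimate of $\mathrm I_2$ carries a factor $\frac{1}{1-\theta_k}$ that degenerates at $\theta_3=1$, forcing a separate treatment of $k=2$ and $k=3$; your version treats all $k\ge2$ uniformly. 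So the idea is right and the computation of the derivative and of the boundary term are correct.

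What is missing is exactly what you call ``the main obstacle'': the final bookkeeping showing that the three error sources (the additive $C_{\mathrm{Diag}}$, the boundary term $\mathrm L^N_{k+1}(\alpha+N^2,z)\le(\log 2)^{\theta_{k+1}}+z^{\theta_{k+1}}$, and the multiplicative square-root error) fit inside the \emph{precise} prefactors $\bigl(1-(\lvert\mathfrak w\rvert C_{\mathrm{Diag}}+2+3/\lvert\mathfrak w\rvert)z^{-\theta_k/2}\bigr)$ and $-\tfrac43 z^{\theta_{k+1}}$. This is not a cosmetic step: these constants feed directly into the recursion for $c_k$ in Theorem~\ref{th:iterative_bounds}, so ``a bound of the right shape'' is not enough. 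Moreover your attributions, as stated, don't hold up: by your own estimate the square-root replacement contributes $1/\lvert\mathfrak w\rvert$ (not $3/\lvert\mathfrak w\rvert$) to the coefficient; the $+2$ arises because the constant part $(\log 2)^{\theta_{k+1}}$ of the boundary term has to be absorbed multiplicatively via $\mathrm L^N_{k+1}(\alpha,z)\ge z^{\theta_{k+1}}\ge z^{\theta_k/2}$; and the $\tfrac43$ is just $2\cdot\tfrac{2}{3}$, coming from bounding $1/\theta_{k+1}\le 2$ for the $z^{\theta_{k+1}}$ part of the boundary term after factoring out $\tfrac32$ --- it is not explained by ``slack in passing from $2/\theta_{k+1}$ to $\tfrac43$,'' which are incomparable quantities. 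If you actually carry the bookkeeping through as you've set it up, it does close (there is enough slack in the stated constants), but as written the proof stops where the lemma's content starts.
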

\begin{proof}
	By applying Lemma~\ref{le:replacement}, we immediately get
    \begin{equ} \label{eq:double_c}
        \lvert \tilde{\mathrm S} - \mathrm{I}\rvert \le C_{\mathrm{Diag}} \, .
    \end{equ} 
    The task now is to obtain bounds on $\mathrm I$, whose definition was given in~(\ref{eq:moving_parts}).
    
    We start by proving the statement for $k\ge4$, which corresponds to $\theta_k \in \left[\tfrac12,\tfrac34\right]$. We first transform integral $\mathrm I$ with the change of variables
	\begin{equ} \label{eq:change_var}
		u = \mathrm L^N\left(N^2(r+\alpha_N),z\right) = \log\left(1+\frac{1}{r+\alpha_N}\right)+z \, , \quad \dd u = \frac{-1}{(r+\alpha_N)(r+\alpha_N+1)}\dd r \, , 
	\end{equ}
	which gives
	\begin{equ} \label{eq:I_changed}
		\mathrm I = \int_0^{\pi} \int_{\mathrm L^N(N^2(1+\alpha_N),z)}^{\mathrm L^N(\alpha,z)} \frac{1}{1+\lvert\mathfrak w\rvert^2 (\cos\theta)^2 u^{\theta_k}} \dd \theta \dd u \, . 
	\end{equ}
	By then integrating in $\theta$, using Lemma~\ref{le:arctan_parameter} with $\gamma=\lvert\mathfrak w\rvert^2 u^{\theta_k}$ and $\beta=1$, we get
	\begin{equ}
		\int_{\mathrm L^N(N^2(1+\alpha_N),z)}^{\mathrm L^N(\alpha,z)} \frac{\pi}{\sqrt{1+\lvert\mathfrak w\rvert^2 u^{\theta_k}}} \dd u =\frac{\pi}{\lvert \mathfrak w \rvert} (\mathrm I_1-\mathrm I_2) \, , 
	\end{equ}
	where $\mathrm I_1,\mathrm I_2 >0$ are defined by
	\begin{equ}
		\mathrm I_1 \eqdef  \int_{\mathrm L^N(N^2(1+\alpha_N),z)}^{\mathrm L^N(\alpha,z)} \frac{1}{\sqrt{u^{\theta_k}}} \dd u \, , \qquad \mathrm I_2 \eqdef \int_{\mathrm L^N(N^2(1+\alpha_N),z)}^{\mathrm L^N(\alpha,z)} \left(-\frac{1}{\sqrt{\lvert\mathfrak w\rvert^{-2} +u^{\theta_k}}} + \frac{1}{\sqrt{u^{\theta_k}}}\right) \dd u \, .
	\end{equ}
	The integral $\mathrm I_1$ is the one announced in the general strategy explained before Lemma~\ref{le:replacement}. Indeed, it can be computed explicitly:
	\begin{equ} \label{eq:int_I1}
		\mathrm I_1 = \frac{\left(\mathrm L^N(\alpha,z)\right)^{\theta_{k+1}}}{\theta_{k+1}} - \frac{\left(\mathrm L^N(N^2(1+\alpha_N),z)\right)^{\theta_{k+1}}}{\theta_{k+1}} = \frac{\mathrm L^N_{k+1}(\alpha,z)}{\theta_{k+1}} - \frac{\mathrm L^N_{k+1}(N^2(1+\alpha_N),z)}{\theta_{k+1}} \, .
	\end{equ}
	It is precisely this computation that gives upper and lower bounds of the form logarithm to the power $\theta_k$, with the sequence of powers $(\theta_k)_{k\ge 2}$ converging to $2/3$.
	The integral $\mathrm I_2$, instead, is regarded as an error term and can be estimated by
	\begin{equs}
		\mathrm I_2 & = \int_{\mathrm L^N(N^2(1+\alpha_N),z)}^{\mathrm L^N(\alpha,z)} \frac{\sqrt{\lvert\mathfrak w\rvert^{-2} +u^{\theta_k}}-\sqrt{u^{\theta_k}}}{\sqrt{u^{\theta_k}} \sqrt{\lvert\mathfrak w\rvert^{-2} +u^{\theta_k}}} \dd u  \\
		&\le\int_{\mathrm L^N(N^2(1+\alpha_N),z)}^{\mathrm L^N(\alpha,z)} \frac{\sqrt{u^{\theta_k}} + \sqrt{\lvert\mathfrak w\rvert^{-2}} - \sqrt{u^{\theta_k}}}{\sqrt{u^{\theta_k}} \sqrt{u^{\theta_k}}} \dd u  \\
		&=\lvert\mathfrak w \rvert^{-1} \int_{\mathrm L^N(N^2(1+\alpha_N),z)}^{\mathrm L^N(\alpha,z)} \frac{1}{u^{\theta_k}} \dd u  \\
		&=\frac{1}{\lvert\mathfrak w \rvert(1-\theta_k)} \left[\left(\mathrm L^N(\alpha,z)\right)^{1-\theta_{k}} - \left(\mathrm L^N(N^2(1+\alpha_N),z)\right)^{1-\theta_{k}}  \right] \, . \label{eq:ll_I2}
	\end{equs}
	The inequality
	\begin{equ}
		\theta_{k+1} - (1-\theta_k) = \frac{\theta_k}{2} > 0
	\end{equ}
	shows that $\mathrm I_2(N) = o_{N \to \infty}(\mathrm I_1(N))$, so that $\mathrm I_2$ is indeed of lower-order in $N$ with respect to $\mathrm I_1$.

	We now have everything we need to conclude the proof in the case $k\ge4$.
	We first show the upper bound~(\ref{eq:S_ub}). Inequality~(\ref{eq:double_c}) and the above steps give
	\begin{equ} \label{eq:case_k_2}
		\tilde{\mathrm{S}} \le \frac{\pi}{\lvert \mathfrak w\rvert}(\mathrm I_1 -\mathrm I_2) + C_{\mathrm{Diag}}  \, .
	\end{equ}
	Then, by plugging~(\ref{eq:int_I1}) and~(\ref{eq:ll_I2}) into~(\ref{eq:case_k_2}) above and dropping the negative terms, we obtain
	\begin{equs} 
		\tilde{\mathrm S} &
        \le
        \frac{\pi}{\lvert \mathfrak w\rvert\theta_{k+1}} \left(\mathrm L^N(\alpha,z)\right)^{\theta_{k+1}} + C_{\mathrm{Diag}} \\
		& \le
        \frac{\pi}{\lvert \mathfrak w\rvert\theta_{k+1}}\left(\mathrm L^N(\alpha,z)\right)^{\theta_{k+1}} + C_{\mathrm{Diag}}\frac{\left(\mathrm L^N(\alpha,z)\right)^{\theta_{k+1}}}{z^{\theta_{k+1}}} \\
        & \le
        \frac{3\pi}{2\lvert \mathfrak w\rvert} \left(1+\frac{\lvert \mathfrak w \rvert C_{\mathrm{Diag}}}{z^{\theta_{k+1}}}\right) \mathrm L^N_{k+1}(\alpha,z) \, , \qquad \quad  \label{eq:usefulness_of_z_1}
	\end{equs}
	where we observed that, for $k$ even, $\theta_{k+1}\in\left[\tfrac23,1\right]$. What we obtained is exactly the claimed~(\ref{eq:S_ub}).

	Finally, we prove the lower bound~(\ref{eq:S_lb}). Inequality~(\ref{eq:double_c}) and the above steps give
	\begin{equ}
		\tilde{\mathrm S} \ge \frac{\pi}{\lvert \mathfrak w\rvert}(\mathrm I_1 - \mathrm I_2) - C_{\mathrm{Diag}} \, .
	\end{equ}
	By dropping the second term in the square brackets in~(\ref{eq:ll_I2}) and using $1+\alpha_N \le 2$ to estimate the negative term in~(\ref{eq:int_I1}) by $\mathrm L_{k+1}(1+\alpha_N,z) \le (\log2+z)^{\theta_{k+1}} \le (\log2)^{\theta_{k+1}}+z^{\theta_{k+1}}$, we obtain 
	\begin{equs}
		\tilde{\mathrm S}
        &\ge
        \frac{\pi}{\lvert \mathfrak w\rvert}\left[\frac{\left(\mathrm L^N(\alpha,z)\right)^{\theta_{k+1}}}{\theta_{k+1}} - \frac{(\log2)^{\theta_{k+1}}}{\theta_{k+1}} -\frac{z^{\theta_{k+1}}}{\theta_{k+1}} - \frac{1}{\lvert\mathfrak w \rvert(1-\theta_k)} \left(\mathrm L^N(\alpha,z)\right)^{1-\theta_{k}}\right] -  C_{\mathrm{Diag}} \\
		&\ge
        \frac{3\pi}{2\lvert \mathfrak w\rvert} \left[\left(1-\frac{\lvert \mathfrak w \rvert C_{\mathrm{Diag}}+2}{z^{\theta_{k+1}}} - \frac{3}{\lvert\mathfrak w \rvert z^{\frac{\theta_k}{2}}}\right) \mathrm L^N_{k+1}(\alpha,z) - \frac{4}{3}z^{\theta_{k+1}} \right] \\
        &\ge 
        \frac{3\pi}{2\lvert \mathfrak w\rvert} \left[\left(1-\left(\lvert \mathfrak w\rvert C_{\mathrm{Diag}}+2+\frac{3}{\lvert \mathfrak w \rvert}\right)\frac{1}{z^{\frac{\theta_k}{2}}}\right) \mathrm L^N_{k+1}(\alpha,z) - \frac{4}{3}z^{\theta_{k+1}} \right] ,
        \label{eq:usefulness_of_z_2}  \\
	\end{equs}
	where for the first inequality we observed that, for $k$ odd, $\theta_{k+1}\in \left[\tfrac12,\tfrac23\right]$ and then applied similar steps as the ones used to obtain (\ref{eq:usefulness_of_z_1}), whereas in the second one we used that $\theta_{k+1}\ge\frac{\theta_{k}}{2}$. What we obtained is exactly the claimed~(\ref{eq:S_lb}).

    If $k=3$, we proceed as above until we reach we reach~(\ref{eq:I_changed}). At this point, we do not need to split $\mathrm I$ into $\mathrm I_1$ and $\mathrm I_2$, since in this case $\mathrm I$ already admits an explicit primitive. Indeed, by first using Lemma~\ref{le:arctan_parameter} to integrate in $\theta$ and then applying the change of variables $v = 1 + \lvert \mathfrak w \rvert^2 u$, we obtain 
    \begin{equ}
        \mathrm I = \left. \frac{2\pi}{\lvert \mathfrak w \rvert^2} \sqrt{1+\lvert \mathfrak w\rvert^2 u} \right|_{\mathrm L^N(N^2(1+\alpha_N),z)}^{\mathrm L^N(\alpha,z)} \, ,
    \end{equ}
    so that
    \begin{equs}
        \tilde{\mathrm S}
        & \ge
        \mathrm I -  C_{\mathrm{Diag}} \\
        & \ge 
        \frac{3 \pi}{2 \lvert \mathfrak w \rvert} \left[\left(1-\frac{\lvert \mathfrak w \rvert C_{\mathrm{Diag}}}{z^{\theta_4}}\right)\mathrm L_4^N(\alpha,z) - \frac{4}{3}\left(\frac{1}{\lvert \mathfrak w \rvert^2}+\log(2)+z\right)^{\theta_4} \right] \\
        & \ge 
        \frac{3 \pi}{2 \lvert \mathfrak w \rvert} \left[\left(1-\left(\lvert \mathfrak w \rvert C_{\mathrm{Diag}}+2+\frac{3}{\lvert \mathfrak w \rvert}\right)\frac{1}{z^{\theta_4}}\right)\mathrm L_4^N(\alpha,z) - \frac{4}{3}z^{\theta_4} \right] , \\
    \end{equs}
    where we recalled that $\theta_4 = \tfrac12$ and followed steps similar to the ones with which we obtained (\ref{eq:usefulness_of_z_2}). The claimed \ref{eq:S_lb} immediately follows by the trivial $\theta_4\ge\frac{\theta_4}{2}$.
    
    If $k=2$, $\mathrm L^N_k \equiv 1$, because $\theta_2 = 0$ by definition. In this somewhat degenerate case, we estimate from above $\mathrm I$, defined in~\ref{eq:moving_parts}, using the following steps. First apply Lemma~\ref{le:arctan_parameter} to integrate it in $\theta$. Then lower bound the resulting factor $(r+\alpha_N+1)$ in the denominator by $1$ and integrate in $r$. The upper bound obtained in this way is
    \begin{equ}
        \frac{\pi}{\lvert \mathfrak w \rvert} \log\left(1+\frac{1}{\alpha_N}\right) \le \frac{\pi}{\lvert \mathfrak w \rvert} \mathrm L^N_3\left(\lambda + \lvert k_{1:n} \rvert^2,z\right) \, . \label{eq:easy_case}
    \end{equ}
    Finally, the stated~(\ref{eq:S_ub}) follows from~(\ref{eq:double_c}) and steps analogous to the ones that concluded the case $k\ge4$.
\end{proof}

\begin{remark}\label{re:to_add_or_to_multiply?}
    In inequalities~(\ref{eq:usefulness_of_z_1}) and~(\ref{eq:usefulness_of_z_2}), we first go from an additive error to a multiplicative one, which is easier to iterate, but increases complexity, and then decrease complexity by estimating $\mathrm{L}^N(\alpha,z)$ by $z$.
    This is quite rough, but otherwise the iteration would give more and more complicated bounds at each step.
\end{remark}

\section{Heuristic Derivation of the Green Kubo Formula}\label{ap:bulkheuristic}
In this Appendix we give a heuristic derivation of the bulk diffusivity formula (\ref{eq:bulkD}).

Consider the equation on the full space regularized with Fourier cut-off 1 with regularized white noise:
\begin{equ}[eq:smoothNoise]
	\partial_t\eta =\frac{1}{2}\Delta\eta+ \mathfrak w\cdot\Pi_1\nabla(\Pi_1\eta)^2  + \nabla\cdot\Pi_a\xi\,,
\end{equ}
where $\Pi_a \xi$ is a space time white noise, regularized in space by a cut-off in Fourier at level $a\in(1,\infty)$.
It can be seen using techniques adopted in \cite{CESnontriviality} that this equation still has a unique solution, existing for all time, and this solution is a strong Markov process invariant under translations in space and time.
The invariant measure is given by regularized spacial white noise $\eta^a$, regularized by the same cut-off as $\Pi_a\xi$.
We consider the equation run at stationarity, i.e. started from $\eta^a$.
Since the noise is regularized, $\eta$ will be a continuous function and therefore we can evaluate it at space-time points.
This allows us to define the correlation function
\begin{equ}
	S(t,x)=\mathbf{E}(\eta(t,x)\eta(0,0))\,,
\end{equ}
for $t\geq 0$ and $x\in\mathbb R^2$.

The bulk diffusivity is commonly defined (see e.g. \cite{Yau2004logtProcess,spohn2012large} for examples from discrete systems and \cite{BQS2011KPZfluctuations} for a continuous example in $d=1$) as the matrix $(D_{ij}(t))_{1\leq i,j\leq 2}$ with entries given by
\begin{equ}
	D_{ij}(t)=\frac{1}{2t}\int_{\mathbb{R}^2}x_ix_jS(t,x)\dd x \, .
\end{equ}
Without loss of generality assume that $\mathfrak w_2=0$, then the reflection symmetry of the system in the second component gives $D_{12}(t)=D_{21}(t)=0$.
We will work with the bulk diffusivity as defined in \cite{CET2023stationary}
\begin{equ}
	D(t)=\frac{1}{2t}\int_{\mathbb{R}^2}\lvert x\rvert ^2S(t,x)\dd x\,,
\end{equ}
which can be interpreted as ($1/t$ times) the variance of $S(t,\cdot)$ seen as a density.
In a particle system this would be the density of a second class particle started at the origin.
This definition of the bulk diffusivity can be connected to the bulk diffusivity matrix above by taking the trace, see also the remark at the end of the section.

We now want to show that this definition of the bulk diffusivity is heuristically consistent with (\ref{eq:bulkD}).
To do this assume that $S(t,x)$ decays fast in $\lvert x\rvert $, noting that for the linear case (i.e. $\mathfrak w=0$) it is the density of a multivariate Gaussian.
Also assume that $S(t,\cdot)$ integrates to $1$ for every $t$.
At time $t=0$ this is true by the law of the stationary measure, since cutting Fourier-modes larger than $1$ is equivalent to convolving with a mass $1$ bump function.
For later time it formally follows from the conservative nature of (\ref{eq:smoothNoise}):
\begin{equs}
	\int_{\mathbb{R}^2} S(t,x)\dd x &=\int_{\mathbb{R}^2}\mathbf E(\eta(t,x)\eta(0,0))\dd x=\mathbf E\left(\eta(0,0)\int_{\mathbb{R}^2} \eta(t,x)\dd x\right)\\
	&=\mathbf E\left(\eta(0,0)\int_{\mathbb{R}^2} \eta(0,x)\dd x\right)=\int_{\mathbb{R}^2} S(0,x)\dd x=1 \,,
\end{equs}
where the third equality follows from an integration by parts, because the entire right-hand side of (\ref{eq:smoothNoise}) can be put in divergence form.
Integrating (\ref{eq:smoothNoise}) in time, multiplying by $\eta(0,0)$ and taking expectations we obtain:
\begin{equ}\label{eq:Sintegrals}
	S(t,x)=S(0,x)+\frac12\int_0^t\Delta S(s,x)\dd s+ \int_0^t\mathbf E\left(\mathcal{N}(\eta)(s,x)\eta(0,0)\right)\dd s\, ,
\end{equ}
where the noise term disappears because it is centered and independent of $\eta(0,0)$, and $\mathcal{N}$ is
\begin{equ}
	\mathcal N(\eta)=\mathfrak w\cdot\Pi_1\nabla\left(\Pi_1\eta\right)^2\,.
\end{equ}
We will integrate the terms on the right-hand side against $\lvert x\rvert ^2$ and divide them by $2t$ one by one.
The first one does not depend on time before dividing by $2t$ and so will vanish for large $t$.
The second one is
\begin{equ}
	\frac{1}{4t}\int_0^t\int_{\mathbb R^2}\lvert x\rvert ^2\Delta S(t,x)\dd x=\frac{1}{t}\int_0^t\int_{\mathbb R^2}S(t,x)\dd x=1 \, .
\end{equ}
Finally let us consider the third one.
Using that $\mathcal N$ is quadratic in $\eta$ and $\eta$ is Gaussian we see that
\begin{equ}\label{eq:cubezero}
	\mathbf E\left(\mathcal N(\eta)(s,x)\eta(s,0)\right)=0 
\end{equ}
and so we can we rewrite
\begin{equs}
	\int_{\mathbb R^2}\lvert x\rvert ^2\mathbf E\left(\mathcal{N}(\eta)(s,x)\eta(0,0)\right)\dd x
	=&\int_{\mathbb R^2}\lvert x\rvert ^2\mathbf E\left(\mathcal{N}(\eta)(s,x)(\eta(0,0)-\eta(s,0))\right)\dd x\\
	=&\int_{\mathbb R^2}\lvert x\rvert ^2\mathbb E\left(\mathcal{N}(\xi^a)(x)\tilde{\mathbf E}_{\xi^a}(\tilde{\eta}(s,0)-\tilde{\eta}(0,0))\right) \dd x\,,
\end{equs}
where $\tilde{\mathbf E}_{\xi^a}$ is the expectation with respect to the law of $\tilde{\eta}(r,x)\eqdef \eta(s-r,x)$, conditioned on $\xi_a\eqdef\tilde{\eta}(0,\cdot)$ and $\mathbb E$ is the expectation associated to the law of a (mollified) spacial white noise $\xi_a$.
Furthermore we used translation invariance of $\xi^a$.
The time reversed process $\tilde{\eta}$ satisfies the equation (\ref{eq:smoothNoise}) with a changed sign in front of the nonlinearity and a different noise with the same law, that is independent of $\xi^a$.
Using this we get
\begin{equs}
	\mathbb E\left(\mathcal{N}(\xi^a)(x)\tilde{\mathbf E}_{\xi^a}(\tilde{\eta}(s,0)-\tilde{\eta}(0,0))\right)
	=&\int_0^s\mathbb E\left(\mathcal{N}(\xi^a)(x)\tilde{\mathbf E}_{\xi^a}(\Delta\tilde{\eta}(r,0)- \mathcal N(\tilde{\eta})(r,0))\right)\dd r
    \\=&
    \int_0^s\mathbb E\left(\mathcal{N}(\xi^a)(x)\tilde{\mathbf E}_{\xi^a}(\Delta\tilde{\eta}(r,0))\right)\dd r\label{eq:laplaceterm}\\
    &+\int_0^s\mathbb E\left(\mathcal{N}(\xi^a)(x)\tilde{\mathbf E}_{\xi^a}(- \mathcal N(\tilde{\eta})(r,0))\right)\dd r\label{eq:Nterm}\,,
\end{equs}
where we used that the noise term vanishes under the expectation, since the noise is independent of $\xi^a$.
This is the reason for considering the time reversed process.
The term \eqref{eq:laplaceterm}, integrated against $\lvert x\rvert ^2$, using the translation invariance to move the $x$ to the $\tilde{\eta}$ and an integration by parts, becomes
\begin{equs}
	4\int_0^s\int_{\mathbb R^2}\mathbb E\left(\mathcal{N}(\xi^a)(0)\tilde{\mathbf E}_{\xi^a}( \tilde{\eta}(r,x)\right) \dd x \dd r
	& =4\int_0^s\int_{\mathbb R^2}\mathbf E\left(\mathcal{N}(\xi^a)(0)\eta(s-r,-x)\right)\dd x \dd r\\
	& =4\int_0^s\int_{\mathbb R^2}\mathbb E\left(\mathcal{N}(\xi^a)(0)\eta(0,-x)\right)\dd x \dd r=0\,,
\end{equs}
where we used that the dynamics are conservative and then again (\ref{eq:cubezero}).

The term \eqref{eq:Nterm} integrated against $\lvert x\rvert ^2$ becomes
\begin{equs}
	-\int_{\mathbb R^2} \lvert &x\rvert ^2\int_0^s \mathbb E\left(\mathcal{N}(\xi^a)(x)\tilde{\mathbf E}_{\xi^a}(\mathcal N(\tilde{\eta})(r,0))\right)\dd r \dd x\\
	&=-\int_0^s\int_{\mathbb R^2} \lvert x\rvert ^2\mathbf E\left(\mathcal{N}(\eta)(s,x)\mathcal N(\eta)(s-r,0)\right)\dd x \dd r\\
	&=-\int_0^s\int_{\mathbb R^2}\lvert x\rvert ^2\mathbf E\left(\mathcal{N}(\eta)(r,x)\mathcal N(\eta)(0,0)\right)\dd x \dd r\\
	&=-\int_0^s\int_{\mathbb R^2}\lvert x\rvert ^2\mathbf E\left(((\mathfrak w\cdot\nabla)\Pi_1{:}(\Pi_1\eta)^2{:})(r,x)((\mathfrak w\cdot\nabla)\Pi_1{:}(\Pi_1\eta)^2{:})(0,0)\right)\dd x \dd r\\
	&=2\lvert\mathfrak w\rvert^2\int_0^s\int_{\mathbb R^2}\mathbf{E}\left((\Pi_1{:}(\Pi_1\eta)^2{:})(r,x)(\Pi_1{:}(\Pi_1\eta)^2{:})(0,0)\right)\dd x \dd r\,,
\end{equs}
where in the last step we first performed integration by parts on the gradient from the first factor, and then used translation invariance to move the $x$ to the second factor, after which we perform another integration by parts.
Each integration by parts gives a factor $-1$, as well as an additional $-1$, since the $x$ becomes a $-x$ when moved to the second factor.
These integration by parts are not rigorous, since we cannot exchange the integral and the expectation.
However the terms in each line are well-defined assuming the decay in $S$ mentioned above.
Here the Wick squares ${:}X^2{:}$ simply subtract the expectations, i.e. ${:}X^2{:}=X^2-\mathbf{E}(X^2)$.
They are necessary since otherwise the integrand would not decay in space.
Collecting all the terms in equation~\ref{eq:Sintegrals} we obtain:
\begin{equ}
	D(t)=1+\frac{\lvert\mathfrak w\rvert^2}{t}\int_0^t\int_0^s\int_{\mathbb R^2} \mathbf{E}\left((\Pi_1{:}(\Pi_1\eta)^2{:})(0,0)(\Pi_1{:}(\Pi_1\eta)^2{:})(r,x)\right) \dd x\dd r \dd s+o(1)\,.
\end{equ}
Dropping the $o(1)$ term and replacing $\mathbb{R}^2$ with a large torus $\mathbb T_N^2$ we obtain exactly formula (\ref{eq:bulkD}).

If we had chosen instead to analyse $D_{ij}(t)$ the same steps would have given, for $\mathfrak w$ parallel to the first coordinate axis,
\begin{equ}
	D_{11}(t)=D(t)-1/2,\quad D_{12}(t)=D_{21}(t)=0,\quad D_{22}(t)=1/2\,.
\end{equ}
As we see, $D_{11}$ and $D$ have equivalent asymptotic behaviour.

\section*{Acknowledgements}
 The~authors would like to thank Fabio Toninelli for invaluable and continuous support and Giuseppe Cannizzaro and Quentin Moulard for useful discussions.
D. D. G. gratefully acknowledges financial support of the  Austrian Science Fund (FWF), Project Number P 35428-N.



\bibliographystyle{alpha}
\bibliography{references}

\end{document}